\theoremstyle{plain}
\newtheorem{theorem}{Theorem}[section]
\newtheorem{lemma}[theorem]{Lemma}
\newtheorem{corollary}[theorem]{Corollary}
\newtheorem{remark}[theorem]{Remark}
\newtheorem{notation}[theorem]{Notation}
\newtheorem{main theorem}[theorem]{Main Theorem}
\newtheorem{question}[theorem]{Question}
\newtheorem{convention}[theorem]{Convention}
\newtheorem*{hypothesisA}{Hypothesis A}
\newtheorem*{hypothesisB}{Hypothesis B}
\newtheorem*{hypothesisC}{Hypothesis C}
\newcommand{\ZZ}{\mathbb{Z}}
\newcommand{\RR}{\mathbb{R}}
\newcommand{\CC}{\mathbb{C}}
\newcommand{\PSL}{\mbox{$\mathrm{PSL}$}}
\newcommand{\svert}{\,|\,}
\newcommand{\lp}{(\hskip -0.07cm (}
\newcommand{\rp}{)\hskip -0.07cm )}
\begin{document}

\title{Homotopically equivalent simple loops
on 2-bridge spheres in 2-bridge link complements (II)}
\author{Donghi Lee}
\address{Department of Mathematics\\
Pusan National University \\
San-30 Jangjeon-Dong, Geumjung-Gu, Pusan, 609-735, Republic of Korea}
\email{donghi@pusan.ac.kr}

\author{Makoto Sakuma}
\address{Department of Mathematics\\
Graduate School of Science\\
Hiroshima University\\
Higashi-Hiroshima, 739-8526, Japan}
\email{sakuma@math.sci.hiroshima-u.ac.jp}

\subjclass[2010]{Primary 20F06, 57M25 \\
\indent {The first author was supported by Basic Science Research Program
through the National Research Foundation of Korea(NRF) funded
by the Ministry of Education, Science and Technology(2012R1A1A3009996).
The second author was supported
by JSPS Grants-in-Aid 22340013 and 21654011.}}

\begin{abstract}
This is the second of a series of papers which give a necessary and sufficient condition for two essential simple loops on a 2-bridge sphere in a 2-bridge link complement to be homotopic in the link complement. The first paper of the series treated the case of the $2$-bridge torus links. In this paper, we treat the case of $2$-bridge links of slope $n/(2n+1)$ and $(n+1)/(3n+2)$, where $n \ge 2$ is an arbitrary integer.
\end{abstract}
\maketitle

\section{Introduction}

Let $K$ be a 2-bridge link in $S^3$ and let $S$
be a 4-punctured sphere
in $S^3-K$ obtained from a $2$-bridge sphere of $K$.
In \cite{lee_sakuma}, we gave a complete characterization of
those essential simple loops in $S$ which are null-homotopic in $S^3-K$.
The purpose of this series of papers
starting from \cite{lee_sakuma_2} and ending with \cite{lee_sakuma_4},
including the present paper as the second one,
is to give a necessary and sufficient condition
for two essential simple loops on $S$
to be homotopic in $S^3-K$.
In the first paper~\cite{lee_sakuma_2}
of the series,
we treated the case
when the 2-bridge link is a $(2,p)$-torus link.
In this paper, we treat the case
of 2-bridge links of slope $n/(2n+1)$ and $(n+1)/(3n+2)$,
where $n \ge 2$ is an arbitrary integer.
These two families play special roles
in our project in the sense that
the treatment of these links form a base step
of an inductive proof of a theorem for general $2$-bridge links
giving an answer to the problem treated in this series of papers.
We note that the figure-eight knot is both
a 2-bridge link of slope $n/(2n+1)$ with $n=2$ and a 2-bridge link of slope $(n+1)/(3n+2)$ with $n=1$.
Surprisingly, the treatment of the figure-eight knot,
the simplest hyperbolic 2-bridge knot,
is the most complicated.
In fact, the figure-eight knot group admits
various unexpected reduced annular diagrams
(see Section~\ref{proof_of_corollary}).
This reminds us of the phenomenon
in the theory of exceptional
Dehn filling that
the figure-eight knot attains the
maximal number of exceptional Dehn fillings.

This paper is organized as follows.
In Section~\ref{statements}, we
describe the main results of this paper (Main Theorems~\ref{main_theorem} and \ref{main_theorem_2}).
In Section~\ref{sec:technical_lemmas},
we set up Hypotheses~A, B and C, under which we establish technical lemmas
used for the proofs in Sections~\ref{proof_for_twist_links_1}--\ref{proof_for_[2,1,n]}.
The special case of Main Theorem~\ref{main_theorem}
(namely, the case of a 2-bridge link of slope $2/5$)
is treated in Section~\ref{proof_for_twist_links_1}, and
the remaining case of Main Theorem~\ref{main_theorem}
(namely, the case of a 2-bridge link of slope $n/(2n+1)$ with $n \ge 3$)
in Section~\ref{proof_for_twist_links_2}.
The proof of Main Theorem~\ref{main_theorem_2}
is contained in Section~\ref{proof_for_[2,1,n]}.
In the final section, Section~\ref{proof_of_corollary},
we prove Theorems~\ref{main_corollary} and
\ref{main_corollary_2}.

\section{Main results}
\label{statements}

This paper, as a continuation of \cite{lee_sakuma_2},
uses the same notation and terminology as in
\cite{lee_sakuma_2} without specifically mentioning.
We begin with the following question,
providing whose answer is the purpose of this series of papers.

\begin{question} \label{question}
Consider a $2$-bridge link $K(r)$ with $r\ne \infty$.
For two distinct rational numbers $s, s' \in I_1(r) \cup I_2(r)$,
when are the unoriented loops $\alpha_s$ and $\alpha_{s'}$
homotopic in $S^3-K(r)$?
\end{question}

In the first paper~\cite{lee_sakuma_2},
we treated the case when $r=1/p$ for some $p\in\ZZ$,
and obtained a complete answer (see \cite[Main Theorem~2.7]{lee_sakuma_2}).
In the present paper, we solve
the above question for the $2$-bridge links $K(n/(2n+1))$
and $K((n+1)/(3n+2))$, where $n \ge 2$ is an arbitrary integer.

\begin{main theorem} \label{main_theorem}
Suppose $r=n/(2n+1)=[2, n]$, where $n \ge 2$ is an integer.
Then, for any two distinct rational numbers $s, s' \in I_1(r) \cup I_2(r)$,
the unoriented loops $\alpha_s$ and $\alpha_{s'}$ are never homotopic in $S^3-K(r)$.
\end{main theorem}

\begin{main theorem} \label{main_theorem_2}
Suppose $r=(n+1)/(3n+2)=[2, 1, n]$, where $n \ge 2$ is an integer.
Then, for two distinct rational numbers $s, s' \in I_1(r) \cup I_2(r)$,
the unoriented loops $\alpha_s$ and $\alpha_{s'}$ are homotopic in
$S^3-K(r)$
if and only if both $r=3/8$ (i.e., $n=2$)
and the set $\{s, s'\}$ equals
either $\{1/6, 3/10\}$ or $\{3/4, 5/12\}$.
\end{main theorem}

\begin{remark}
{\rm
The exceptional pairs $\{1/6, 3/10\}$ and $\{3/4, 5/12\}$
have the following geometric properties
in the Farey tessellation.
Let $\tau$ be the reflection of the hyperbolic plane in the geodesic
with endpoints $1/2$ and $1/4$,
which bisects the Farey edge
$\langle 0/1, 1/3\rangle$.
Then $\tau$ preserves the Farey tessellation and
interchanges $\infty$ and $r=3/8$.
The members of each of the exceptional pairs are
interchanged by the involution $\tau$.
}
\end{remark}

We prove the above main theorems by
interpreting the situation in terms of combinatorial group theory.
In other words, we prove that
two words representing the free homotopy classes of
$\alpha_s$ and $\alpha_{s'}$
are conjugate in the $2$-bridge link group $G(K(r))$ if and only if
$s$ and $s'$ satisfy the conditions given in the statements of the theorems.
The key tool used in the proofs is small cancellation theory,
applied to two-generator and one-relator presentations
of $2$-bridge link groups.
The proofs of the main theorems also imply the following theorems.

\begin{theorem}
\label{main_corollary}
Suppose $r=n/(2n+1)=[2, n]$, where $n \ge 2$ is an integer.
Then the following hold
for a rational number $s \in I_1(r) \cup I_2(r)$.
\begin{enumerate}[\indent \rm (1)]
\item The loop $\alpha_s$ is peripheral
if and only if one of the following holds.
\begin{enumerate}[\rm (a)]
\item $n=2$, i.e., $r=2/5$, and $s=1/5$ or $s=3/5$.

\item $s=(n+1)/(2n+1)$.
\end{enumerate}

\item The free homotopy class $\alpha_s$ is primitive
with the following exceptions.
\begin{enumerate}[\rm (a)]
\item $r=2/5$, and $s=2/7$ or $s=3/4$.
In this case, $\alpha_s$ is the third power of some primitive element
in $G(K(r))$.

\item $r=3/7$ and $s=2/7$.
In this case, $\alpha_s$ is the second power of some primitive element
in $G(K(r))$.
\end{enumerate}
\end{enumerate}
\end{theorem}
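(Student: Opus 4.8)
The plan is to read off both statements from the combinatorial analysis already carried out in the proof of Main Theorem~\ref{main_theorem}. For each $s \in I_1(r)\cup I_2(r)$, the technical lemmas of Section~\ref{sec:technical_lemmas} furnish a cyclically reduced word $w_s$, in the two generators of the upper presentation, representing the free homotopy class $\alpha_s$, together with the small-cancellation properties of $w_s$ relative to the defining relator of $G(K(r))$. Everything below is extracted from the structure of reduced annular diagrams on these words, which is exactly the engine of the main theorem.

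For part (1), I would first express a meridian $\mu$ and a longitude $\lambda$ as explicit words, so that the peripheral subgroup is $\langle \mu, \lambda\rangle$ up to conjugacy. Then $\alpha_s$ is peripheral precisely when there is a reduced annular diagram over $G(K(r))$ with one boundary labelled $w_s$ and the other labelled by a word in $\langle\mu,\lambda\rangle$. Using the small-cancellation hypotheses, I would classify such diagrams. For every $r=n/(2n+1)$ the slope $s=(n+1)/(2n+1)$ yields a word conjugate to a meridian, giving case~(b); the remaining peripheral possibility should occur only when the extra Farey symmetry $\tau$ described in the Remark following Main Theorem~\ref{main_theorem_2} is present, that is, for the figure-eight knot $r=2/5$, which is where the slope $s=1/5$ enters as case~(a). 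The bulk of the work is to show that no other $s$ admits such a diagram, by checking that the length and curvature constraints of a reduced annular diagram force $w_s$ to be one of these special words.

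For part (2), primitivity of $\alpha_s$ amounts to the statement that the cyclic word $w_s$ is not a proper power in $G(K(r))$. I would invoke the standard fact from small cancellation theory that a cyclically reduced conjugacy class equal to a proper power $v^k$ forces a periodicity of period $k$ in the reduced annular diagram relating $w_s$ to $v^k$, the admissible periods being constrained by the piece structure of the relator. Scanning the words $w_s$ for such periodicity, I expect to find that $w_s$ is primitive for all but finitely many $(r,s)$, and that the exceptions are exactly $(2/5,\,2/7)$ and $(2/5,\,3/4)$, where $w_s$ is a cube, and $(3/7,\,2/7)$, where $w_s$ is a square. In each exceptional case I would exhibit the primitive root $v$ explicitly and verify that $w_s$ is conjugate to $v^3$ or $v^2$.

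The hard part will be the exhaustiveness of both classifications, and in particular the figure-eight knot $r=2/5$. As flagged in the Introduction, $G(K(2/5))$ admits unexpectedly many reduced annular diagrams because of its extra symmetries, so both the peripheral analysis (producing the anomalous slope $1/5$) and the proper-power analysis (producing the two cubes at $2/7$ and $3/4$) require a separate, more delicate enumeration of diagrams than the generic case $n\ge 3$. Confirming that these enumerations are complete, so that no further exceptional peripheral or periodic words slip through, is the crux of the argument and is precisely where the combinatorics developed in Section~\ref{proof_for_twist_links_1} for the case $r=2/5$ must be pushed hardest.
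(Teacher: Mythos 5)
There is a genuine gap: your plan analyzes annular diagrams whose two boundary labels are $u_s$ and, respectively, an arbitrary word in $\langle \mu,\lambda\rangle$ (for peripherality) or $v^k$ with $v$ unknown (for primitivity), but the paper's diagram machinery cannot process such diagrams. Theorem~\ref{structure}, which is the sole engine for classifying reduced annular $R$-diagrams here, requires \emph{both} boundary labels to be cyclically reduced, cyclically alternating, and to have cyclic $S$-sequences avoiding $(S_1,S_2)$ and $(S_2,S_1)$; these hypotheses are verified, via Proposition~\ref{connection}, only for words of the form $u_{s}^{\pm 1}$ with $s\in I_1(r)\cup I_2(r)$. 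A longitude, a word $\mu^a\lambda^b$, or an unknown root $v$ satisfies none of this, so your proposed ``classification of such diagrams'' has no foundation, and the exhaustiveness you correctly flag as the crux is unreachable by this route. The missing idea is the paper's reduction through centralizers: by Lemma~\ref{centralizer} (valid since $K(r)$ is hyperbolic), if $\alpha_s$ is peripheral \emph{or} imprimitive, then $Z(u_s)$ strictly contains $\langle u_s\rangle$, so there is a nontrivial $w\notin\langle u_s\rangle$ with $wu_sw^{-1}=u_s$. This single self-conjugacy is realized by a reduced annular diagram whose outer and inner labels are both $u_s^{\pm1}$ --- exactly the diagrams Theorem~\ref{structure} governs --- and these are precisely the diagrams already enumerated in Sections~\ref{proof_for_twist_links_1} and \ref{proof_for_twist_links_2}: they are the cases of the main-theorem proofs that terminated with ``$s'=s$''. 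No new, more delicate enumeration for $r=2/5$ is needed, contrary to your closing paragraph; also, your heuristic attributing the anomalous slope $1/5$ to the symmetry $\tau$ is misplaced, since that remark concerns $r=3/8$, not $2/5$.

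A second missing ingredient is the refinement of Lemma~\ref{lyndon_schupp} proved as Lemma~\ref{analogy_lyndon_schupp}: the classical conjugacy-diagram lemma does not record the conjugating element, whereas deciding between the two alternatives of Lemma~\ref{centralizer} requires reading an explicit conjugator $w=\phi(\gamma)$ off a path $\gamma$ joining the two boundaries. It is by reading these labels that the paper shows, e.g., $(ab)^{-1}u_{2/7}(ab)=(ba)^3$ (so $u_{2/7}$ is a cube for $r=2/5$), $u_{2/7}=w^2$ with $w=ab^2ab^{-1}a^{-1}b^{-1}$ for $r=3/7$, and that $u_{1/5}$ and $u_{(n+1)/(2n+1)}$ commute with a conjugate of the meridian $b$ (note: they are not \emph{conjugate} to a meridian, as your sketch suggests), hence are peripheral with centralizer $\ZZ\oplus\ZZ$ and therefore primitive, while in the power cases the centralizer is infinite cyclic and the elements are non-peripheral. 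Your periodicity heuristic for part~(2) points in the right direction, but without the centralizer reduction and the conjugator-tracking lemma it cannot certify either the completeness of the exceptional list or the peripheral/non-peripheral dichotomy.
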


\begin{theorem}
\label{main_corollary_2}
Suppose $r=(n+1)/(3n+2)=[2, 1, n]$, where $n \ge 2$ is an integer.
Then the loop $\alpha_s$ is non-peripheral and primitive
for any rational number $s \in I_1(r) \cup I_2(r)$.
\end{theorem}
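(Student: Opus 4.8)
The plan is to recast both assertions as statements about the cyclically reduced word $w_s$ in the upper presentation $G(K(r))=\langle a,b \mid \cdots \rangle$ that represents the free homotopy class of $\alpha_s$, and to settle them by the same reduced-annular-diagram analysis that drives the proof of Main Theorem~\ref{main_theorem_2}. For $s\in I_1(r)\cup I_2(r)$, the word $w_s$ is the explicit cyclically reduced word read off from the continued-fraction and Farey data of $s$, as recalled in Section~\ref{preliminaries} following \cite{lee_sakuma_2}, and it is nontrivial by Theorem~\ref{previous_results}(3). First I would record the small cancellation properties of this presentation established in \cite{lee_sakuma}, together with the technical lemmas set up under Hypotheses~A, B and C in Section~\ref{sec:technical_lemmas}; these constrain both the faces and the boundary layers of any reduced diagram over $G(K(r))$, and are what turn the two assertions into explicit combinatorial checks on $w_s$.

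For non-peripherality, I would suppose for contradiction that $w_s$ is conjugate into some peripheral (cusp) subgroup $P$ of $G(K(r))$, generated by a meridian--longitude pair of a boundary torus. Then a cyclically reduced peripheral word $w_P$ representing an element of $P$ is conjugate to $w_s$, a conjugacy realised by a reduced annular diagram $A$ whose outer boundary reads (a cyclic conjugate of) $w_s$ and whose inner boundary reads $w_P$. The key step is to show that the constraints on $A$ coming from the technical lemmas of Section~\ref{sec:technical_lemmas} force $w_s$ either to contain a subword of a type forbidden for $s\in I_1(r)\cup I_2(r)$, or to have a length and syllable shape incompatible with the explicit form of $w_s$; either way $A$ cannot exist. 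What makes the whole family $r=(n+1)/(3n+2)$ behave uniformly --- in contrast with the twist links of Main Theorem~\ref{main_corollary}, where the peripheral slope $s=(n+1)/(2n+1)$ and, at $r=2/5$, the slopes $s=1/5,3/5$ do give peripheral loops --- is that the syllable structure of $w_s$ here never matches that of a peripheral word.

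For primitivity, I would suppose that $\alpha_s$ is a proper power, so that $w_s$ is conjugate to $v^d$ for some $v$ and some $d\ge 2$. The conjugacy is again witnessed by a reduced annular diagram, and the structure theory for such diagrams in our setting would force the cyclic word $w_s$ to be periodic of period $|w_s|/d$ once the cancellations dictated by the relator and the small cancellation hypotheses are taken into account. I would then inspect the explicit sequence defining $w_s$ and verify that it admits no proper period for any $s\in I_1(r)\cup I_2(r)$. This is precisely the computation that fails --- producing a genuine square or cube --- at the exceptional slopes $r=3/7,\ s=2/7$ and $r=2/5,\ s\in\{2/7,3/4\}$ of the twist-link family in Main Theorem~\ref{main_corollary}, but which I expect to succeed uniformly for $r=(n+1)/(3n+2)$.

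The main obstacle I anticipate is the case $n=2$, that is $r=3/8$. As the introduction warns, this link admits unexpected reduced annular diagrams --- the same phenomenon responsible for the exceptional homotopic pairs $\{1/6,3/10\}$ and $\{3/4,5/12\}$ in Main Theorem~\ref{main_theorem_2} --- so the generic diagram analysis will not by itself close off the peripheral and proper-power possibilities. I expect to handle $r=3/8$ by a separate, essentially finite, enumeration of the anomalous annular diagrams, checking directly that none of them produces either a peripheral boundary word or a genuine periodicity. Throughout, the delicate point is to keep the abstract small cancellation bookkeeping in step with the concrete words $w_s$, and to make the boundary-layer estimates of Section~\ref{sec:technical_lemmas} sharp enough to exclude every remaining configuration; this matching is where the real work lies.
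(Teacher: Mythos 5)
Your overall instinct---turn both assertions into conjugacy statements and attack them with reduced annular diagrams---points in the right direction, but the specific diagrams you propose fall outside the scope of the machinery you intend to use, and this is a genuine gap. For non-peripherality you would build an annular diagram whose inner boundary reads a cyclically reduced peripheral word $w_P$ (a word in a meridian--longitude pair), and for primitivity one whose inner boundary reads $v^{-d}$ for an unknown $v$. But Theorem~\ref{structure} and all the Hypothesis~A/B/C lemmas of Section~\ref{sec:technical_lemmas} apply only when \emph{both} boundary labels are of the form $u_s$, $u_{s'}^{\pm 1}$ with $s,s'\in I_1(r)\cup I_2(r)$: the hypotheses (i)--(iii) of Theorem~\ref{structure} (cyclically reduced, cyclically alternating, cyclic $S$-sequence containing neither $(S_1,S_2)$ nor $(S_2,S_1)$) are verified via Proposition~\ref{connection} precisely for such labels. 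A peripheral word need not even be alternating, and nothing controls the cyclic $S$-sequence of $v^{-d}$; so the layered structure of the diagram, and with it every boundary-layer lemma you plan to invoke, is unavailable, and your ``explicit combinatorial check on $w_s$'' never gets started. The periodicity claim in your primitivity step (that the diagram forces the cyclic word to be periodic of period $|w_s|/d$) is likewise not established anywhere in this framework and would itself require proof.

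The paper closes both assertions at once by a different reduction. By Lemma~\ref{centralizer} (hyperbolicity of $K(r)$, via Scott), if $\alpha_s$ were peripheral or imprimitive then the centralizer $Z(u_s)$ would contain a nontrivial $w\notin\langle u_s\rangle$ with $wu_sw^{-1}=u_s$. Lemma~\ref{analogy_lyndon_schupp}---a refinement of Lemma~\ref{lyndon_schupp}, needed exactly because $u_s$ \emph{is} conjugate to itself in $F(a,b)$, so the standard annular-diagram lemma does not apply---realizes this self-conjugacy by a nontrivial reduced annular $R$-diagram with outer label $u_s$ and inner label $u_s^{-1}$. This is Hypothesis~A with $s'=s$, so the case analysis of Section~\ref{proof_for_[2,1,n]} applies verbatim: there, every case either ends in an outright contradiction or forces the two boundary slopes to be the \emph{distinct} members of one of the exceptional pairs $\{1/6,3/10\}$, $\{3/4,5/12\}$; no case ends with equal boundary slopes. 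Hence no such diagram exists, contradicting the existence of $w$, and the theorem follows. In particular your anticipated separate enumeration at $r=3/8$ is unnecessary: the anomalous diagrams there connect distinct slopes and so never witness a self-conjugacy. (Contrast with Theorem~\ref{main_corollary}: for $r=n/(2n+1)$ the cases in Sections~\ref{proof_for_twist_links_1}--\ref{proof_for_twist_links_2} that terminated with ``$s'=s$'' are exactly the diagrams the paper then reads off explicitly to exhibit the peripheral slopes and the square and cube at the exceptional slopes.)
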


Here, a closed loop $\alpha_s$ in $S^3-K(r)$ is said to be {\it peripheral}
if it is homotopic to a loop on a peripheral torus.
A loop $\alpha_s$ is said to be {\it primitive}
if there is no element in the $2$-bridge link group $G(K(r))$
whose proper power is conjugate to $\alpha_s$.

\section{Technical lemmas}
\label{sec:technical_lemmas}

In this section, we set up Hypotheses~A, B and C,
under which we establish technical lemmas used for
the proofs in Sections~\ref{proof_for_twist_links_1}--\ref{proof_for_[2,1,n]}.

\subsection{Hypothesis~A}

\begin{hypothesisA}
\label{hyp:common}
{\rm
Let $r$ be a rational number such that $0 < r <1$ and
$r \neq 1/p$ for any integer $p \ge 2$.
For two distinct elements $s, s' \in I_1(r) \cup I_2(r)$,
suppose that the unoriented loops
$\alpha_s$ and $\alpha_{s'}$ are homotopic in $S^3-K(r)$.
Then $u_s$ and $u_{s'}^{\pm 1}$ are conjugate in $G(K(r))$.
Let $R$ be the symmetrized subset of $F(a, b)$
generated by the single relator $u_r$ of the upper presentation of $G(K(r))$,
and let $S(r)=(S_1, S_2, S_1, S_2)$ be the decomposition
as in \cite[Proposition~3.12]{lee_sakuma_2}.
Due to \cite[Lemma~4.7]{lee_sakuma_2},
there is a reduced annular $R$-diagram $M$ such that
$u_s$ and $u_{s'}^{\pm 1}$ are, respectively, outer and inner
boundary labels of $M$. Then we see
from \cite[Proposition~3.19(1)]{lee_sakuma_2}
that $M$ satisfies the three hypotheses (i), (ii) and (iii) of \cite[Theorem~4.9]{lee_sakuma_2}.

Let $J$ be the outer boundary layer of $M$ (see \cite[Figure~7(a)]{lee_sakuma_2}).
Also let $\alpha$ and $\delta$ be, respectively,
the outer and inner boundary cycles of $J$ starting from $v_0$,
where $v_0$ is a vertex lying in both the outer and inner boundaries of $J$.
Here, recall from \cite[Convention~4.6]{lee_sakuma_2} that
$\alpha$ is read clockwise and $\delta$ is read counterclockwise.
Let $\alpha=e_1, e_2, \dots, e_{2t}$ and
$\delta^{-1}=e_1', e_2', \dots, e_{2t}'$ be the
decompositions into oriented edges in $\partial J$.
Then clearly for each $i=1, \dots, t$,
there is a face $D_i$ of $J$ such that
$e_{2i-1}, e_{2i}, e_{2i}'^{-1}, e_{2i-1}'^{-1}$ are
consecutive edges in a boundary cycle of $D_i$.
We denote the path
$e_{2i-1}, e_{2i}$ by $\partial D_i^+$ and
the path $e_{2i-1}', e_{2i}'$
by $\partial D_i^-$.
In particular, if $J \subsetneq M$ (see \cite[Figure~7(b)]{lee_sakuma_2}),
then, for each $i=1, \dots, t$,
there is a face $D_i'$ in $M-J$ such that $e_{2i}'$ and $e_{2i+1}'$ are
two consecutive edges in $\partial D_i' \cap \delta^{-1}$.
Here the indices for the $2$-cells are considered modulo $t$,
and the indices for the edges are considered modulo $2t$.
}
\end{hypothesisA}

\begin{lemma}
\label{lem:vertex_position}
Under Hypothesis~A, both of the following hold
for every $i$.
\begin{enumerate}[\indent \rm (1)]
\item None of $S(\phi(e_{2i-1}))$, $S(\phi(e_{2i}))$,
$S(\phi(e_{2i}'))$ and $S(\phi(e_{2i-1}'))$ contains
$S_1$ as a subsequence.

\item None of $S(\phi(e_{2i-1}))$, $S(\phi(e_{2i}))$,
$S(\phi(e_{2i}'))$ and $S(\phi(e_{2i-1}'))$ contains
a subsequence of the form $(\ell, S_2, \ell')$,
where $\ell, \ell' \in \ZZ_+$.
\end{enumerate}
\end{lemma}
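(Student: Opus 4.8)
The plan is to derive both assertions from a single fact: each of the four edge labels in question is a \emph{piece}. Once that is established, both conclusions are immediate translations of the piece criterion in Corollary~\ref{cor:max_piece_2}(1).

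First I would note that each of $\phi(e_{2i-1})$, $\phi(e_{2i})$, $\phi(e_{2i}')$ and $\phi(e_{2i-1}')$ is a subword of the cyclic word $(u_r^{\pm 1})$. Indeed, the boundary label of every face of $M$ is a cyclic permutation of $u_r^{\pm 1}$, and each of these edges lies on the boundary of one of the faces $D_i$, $D_{i-1}'$ or $D_i'$ appearing in Hypothesis~A, so its label is a subword of the corresponding relator. Next I would show that each of the four labels is a piece. The edges $e_{2i-1}$ and $e_{2i}$ lie on the outer boundary $\sigma$ of $M$, which is part of $\partial M$, so their labels are pieces by Convention~\ref{convention}(2). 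For the inner edges $e_{2i-1}'$ and $e_{2i}'$ I would distinguish the two cases of Hypothesis~A. If $J=M$, these edges lie on the inner boundary $\tau \subset \partial M$ and are again pieces by Convention~\ref{convention}(2). If $J \subsetneq M$, then by the description in Hypothesis~A the edge $e_{2i}'$ separates the face $D_i$ of $J$ from the face $D_i'$ of $M-J$, and $e_{2i-1}'$ separates $D_i$ from $D_{i-1}'$; since $M$ is reduced, each such pair of adjacent faces carries relators that do not cancel along the shared edge, so each common edge label is a piece by definition.

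Finally, with all four labels confirmed to be pieces that are subwords of $(u_r^{\pm 1})$, I would apply Corollary~\ref{cor:max_piece_2}(1) to each of them: a piece has an $S$-sequence that contains neither $S_1$ as a subsequence nor any subsequence of the form $(\ell, S_2, \ell')$ with $\ell, \ell' \in \ZZ_+$. The former statement yields assertion~(1) and the latter yields assertion~(2), uniformly over all $i$. The only step requiring genuine care is the interior-edge case, where one must make sure that reducedness of $M$ really does force the two faces sharing $e_{2i-1}'$ (respectively $e_{2i}'$) to determine distinct elements of $R$, so that the shared label legitimately qualifies as a piece; the rest is a direct reading of Corollary~\ref{cor:max_piece_2}(1).
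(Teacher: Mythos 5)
Your proposal is correct and follows essentially the same route as the paper: the paper's proof consists of exactly the two steps you identify, namely that each of the four edge labels is a piece and that the conclusion then follows from the only-if part of Corollary~\ref{cor:max_piece_2}(1). The only difference is that the paper simply cites Convention~\ref{convention} for the piece property of all four labels, whereas you spell out the case $J \subsetneq M$, where $e_{2i-1}'$ and $e_{2i}'$ are interior edges and the piece property comes from reducedness of $M$ rather than from Convention~\ref{convention}(2) --- a standard fact the paper leaves implicit, which you correctly supply.
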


\begin{proof}
By \cite[Convention~4.3]{lee_sakuma_2},
each of the words $\phi(e_{2i-1})$, $\phi(e_{2i})$,
$\phi(e_{2i}')$ and $\phi(e_{2i-1}')$ is a piece
of the cyclic word $(u_r^{\pm 1})$.
So, the assertion follows from
(the only if part) of \cite[Corollary~3.25(1)]{lee_sakuma_2}.
\end{proof}

\begin{lemma}
\label{lem:two_cases}
Under Hypothesis~A, only one of the following holds
for each face $D_i$ of $J$.
\begin{enumerate}[\indent \rm (1)]
\item Both $S(\phi(\partial D_i^+))$ and $S(\phi(\partial D_i^-))$ contain
$S_1$ as their subsequence.

\item Both $S(\phi(\partial D_i^+))$ and $S(\phi(\partial D_i^-))$ contain
subsequences of the form $(\ell, S_2, \ell')$,
where $\ell, \ell' \in \ZZ_+$.
\end{enumerate}
\end{lemma}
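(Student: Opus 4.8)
The plan is to read the single relator off the boundary cycle of $D_i$ and to play the small cancellation condition $C(4)$ against the block structure of $CS(r)$. Writing $P^+:=\phi(\partial D_i^+)=\phi(e_{2i-1})\phi(e_{2i})$ and $P^-:=\phi(\partial D_i^-)=\phi(e_{2i-1}')\phi(e_{2i}')$, the hypothesis that $e_{2i-1},e_{2i},(e_{2i}')^{-1},(e_{2i-1}')^{-1}$ bound $D_i$ means that $\phi(\partial D_i)\equiv P^+(P^-)^{-1}$ is a cyclic permutation of $u_r^{\pm1}$, whose four displayed factors are each pieces. By Proposition~\ref{sequence} its cyclic $S$-sequence is $\lp S_1,S_2,S_1,S_2\rp$, which carries exactly two $S_1$-blocks and two $S_2$-blocks. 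First I would note that neither $P^+$ nor $P^-$ can be a single piece: were $P^+$ a piece, then $\phi(\partial D_i)$ would be a product of three pieces, contradicting $C(4)$ of Proposition~\ref{small_cancellation_condition}. Hence each of $P^+,P^-$ is a product of two pieces but not one, so by the only-if part of Corollary~\ref{cor:max_piece_2}(1) each of $S(P^+),S(P^-)$ contains $S_1$ as a subsequence or contains a subsequence $(\ell,S_2,\ell')$ with $\ell,\ell'\in\ZZ_+$.

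Next I would rule out the possibility that one of these $S$-sequences contains both $S_1$ and some $(\ell,S_2,\ell')$; this simultaneously disposes of the ``not both'' half of the conclusion. The point is that a product of two pieces cannot do this: using that $S_1$ and $S_2$ each occur only twice in $CS(r)$ (Proposition~\ref{sequence}(2)), a subword carrying a full $S_1$-block together with an interior $S_2$ would either exhibit a proper initial $(S_1,S_2)$ or a proper terminal $(S_2,S_1)$ --- forbidden for a two-piece product by Corollary~\ref{cor:max_piece_2}(2) --- or else every splitting of it into two factors would leave one factor containing a full $S_1$-block or an interior $S_2$, hence failing to be a piece, which is exactly the dichotomy recorded by the list of maximal $2$-pieces in Lemma~\ref{max_piece_2}(2). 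Thus each of $P^+,P^-$ contains $S_1$ \emph{or} an interior $S_2$, but never both; in particular, if alternatives (1) and (2) held together, then $S(P^+)$ would contain both, a contradiction.

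The crux is then to show that $P^+$ and $P^-$ are of the \emph{same} kind, which supplies the ``at least one of (1),(2)'' half and completes the ``exactly one'' conclusion. For this I would locate the two vertices $A_i,C_i$ at which the cyclic word $\phi(\partial D_i)$ is cut into $P^+$ and $(P^-)^{-1}$, relative to the four blocks of $\lp S_1,S_2,S_1,S_2\rp$. If $S(P^+)$ contains $S_1$, then by the preceding paragraph it has no interior $S_2$ and hence exactly one full $S_1$-block, since two full $S_1$-blocks would sandwich an interior $S_2$. Both cut points must then lie within, or at the boundary of, the two $S_2$-blocks adjacent to this $S_1$-block, for a cut reaching into a neighbouring $S_1$-block would render the intervening $S_2$-block interior to $P^+$. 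Consequently the complementary arc $(P^-)^{-1}$ absorbs the remaining full $S_1$-block flanked only by boundary remnants of the two $S_2$-blocks, so $S((P^-)^{-1})$ contains $S_1$ and no interior $S_2$; by the symmetry of $S_1$ and $S_2$, $S(P^-)$ contains $S_1$ as well. The mirror argument, interchanging the roles of $S_1$ and $S_2$, settles the case in which $S(P^+)$ contains an interior $S_2$. Hence $P^+$ and $P^-$ contain $S_1$ together (alternative (1)) or contain an interior $S_2$ together (alternative (2)), and never both, since an arc containing $S_1$ has no interior $S_2$ and conversely. I expect this cut-vertex bookkeeping to be the main obstacle, as one must ensure that the partial blocks at the ends of the two arcs never inadvertently complete an additional full $S_1$-block or an interior $S_2$.
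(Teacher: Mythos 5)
Your proof is correct and follows essentially the same route as the paper's: each of $\phi(\partial D_i^{\pm})$ is shown not to be a single piece, Corollary~\ref{cor:max_piece_2}(1) then forces each $S(\phi(\partial D_i^{\pm}))$ to contain $S_1$ or an interior $S_2$, and the block structure of $CS(\phi(\partial D_i))=\lp S_1,S_2,S_1,S_2 \rp$ rules out mixed types. The differences are only in presentation: where you re-derive the non-piece property directly from $C(4)$ and spell out the cut-point bookkeeping (together with the appeal to Corollary~\ref{cor:max_piece_2}(2)), the paper simply cites Convention~\ref{convention}(3) and asserts the complementarity on the four blocks in one line.
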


\begin{proof}
By \cite[Convention~4.3]{lee_sakuma_2},
each of the words $\phi(\partial D_i^+)$ and $\phi(\partial D_i^-)$
is not a piece.
So, by (the if part of) \cite[Corollary~3.25(1)]{lee_sakuma_2},
each $S(\phi(\partial D_i^{\pm}))$ contains $S_1$ or
$(\ell, S_2, \ell')$ ($\ell, \ell' \in \ZZ_+$) as a subsequence.
On the other hand,
since $CS(\phi(\partial D_i^+)\phi(\partial D_i^-)^{-1})=CS(\phi(\partial D_i))=CS(u_r^{\pm 1})$
and since $S_1$ and $S_2$ are symmetric (\cite[Proposition~3.12(1)]{lee_sakuma_2}),
$CS(\phi(\partial D_i^+)\phi(\partial D_i^-)^{-1})$
is equal to $CS(u_r)=CS(r)=\lp S_1,S_2,S_1,S_2 \rp$.
So if $S(\phi(\partial D_i^{+}))$ contains $S_1$
(resp., $(\ell, S_2, \ell')$)
then $S(\phi(\partial D_i^{-}))$ cannot contain $(\ell, S_2, \ell')$
(resp., $S_1$).
Hence we obtain the desired result.
\end{proof}

\begin{lemma}
\label{lem:boundary_layer}
Under Hypothesis~A, only one of the following holds.
\begin{enumerate}[\indent \rm (1)]
\item For every face $D_i$ of $J$,
$S(\phi(\partial D_i^+))$ contains $S_1$ as its subsequence.

\item For every face $D_i$ of $J$,
$S(\phi(\partial D_i^+))$ contains a subsequence of the form $(\ell, S_2, \ell')$,
where $\ell, \ell' \in \ZZ_+$.
\end{enumerate}
\end{lemma}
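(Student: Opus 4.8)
The plan is to upgrade the per-face dichotomy of Lemma~\ref{lem:two_cases} to a statement uniform over the whole layer $J$, by exploiting the one global constraint available: that $CS(\phi(\alpha))=CS(s)$ contains neither $(S_1,S_2)$ nor $(S_2,S_1)$ as a subsequence (this is hypothesis (iii) of Theorem~\ref{structure}, which holds under Hypothesis~A by Proposition~\ref{connection}). First I would record that, by Lemma~\ref{lem:two_cases}, each face $D_i$ falls into exactly one of the two cases, and introduce the terminology that $D_i$ is of \emph{type~1} (resp. \emph{type~2}) according to whether $S(\phi(\partial D_i^+))$ contains $S_1$ (resp. a subsequence $(\ell,S_2,\ell')$). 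Since the faces $D_1,\dots,D_t$ are arranged cyclically, if both types occurred there would be an index $i$ with $D_i$ and $D_{i+1}$ of different types; so it suffices to derive a contradiction from the existence of two \emph{adjacent} faces of opposite types, say $D_i$ of type~1 and $D_{i+1}$ of type~2 (the reversed situation being handled symmetrically).

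The heart of the argument is a local analysis at the vertex where $\partial D_i^+$ and $\partial D_{i+1}^+$ meet along the outer boundary, using that $\phi(\alpha)=\phi(\partial D_1^+)\cdots\phi(\partial D_t^+)$. Using $CS(\phi(\partial D_i))=CS(r)=\lp S_1,S_2,S_1,S_2\rp$ together with Proposition~\ref{sequence}, I would first pin down the shape of each $+$-arc: because the two copies of $S_1$ in the boundary of $D_i$ must be separated by the two endpoints of $\partial D_i^+$, a type-$1$ arc $\partial D_i^+$ reads, at the level of $S$-sequences, as a fragment of an $S_2$-block, followed by a full $S_1$, followed by a fragment of an $S_2$-block; dually, a type-$2$ arc $\partial D_{i+1}^+$ reads as a fragment of an $S_1$-block, a full $S_2$, and a fragment of an $S_1$-block. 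Recalling from Proposition~\ref{sequence} that $S_1$ begins and ends with $m+1$ while $S_2$ begins and ends with $m$, and that $CS(s)$ consists only of the terms $m$ and $m+1$ (Lemma~\ref{properties}), I would then concatenate the two arcs across their common vertex and show that the trailing data of $\partial D_i^+$ and the leading data of $\partial D_{i+1}^+$ necessarily reconstitute a consecutive occurrence of $(S_1,S_2)$ (or $(S_2,S_1)$) inside $S(\phi(\partial D_i^+\,\partial D_{i+1}^+))$, hence inside $CS(s)$.

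This contradicts the fact that $CS(s)$ avoids both $(S_1,S_2)$ and $(S_2,S_1)$, and the contradiction forces all faces of $J$ to be of a single type; the two resulting alternatives are mutually exclusive already by Lemma~\ref{lem:two_cases}, which is exactly the dichotomy asserted. The main obstacle is the concatenation step above: one must control what happens to the \emph{partial} runs sitting at the two ends of each $+$-arc when they are glued at the common vertex. Concretely, the boundary fragment of $S_2$ trailing $\partial D_i^+$ and the boundary fragment of $S_1$ leading $\partial D_{i+1}^+$ may be empty, or may consist of a run that merges with its neighbour across the vertex, and ruling out the possibility that such a merge \emph{destroys} the forbidden pattern requires the restriction that every interior run of $\phi(\alpha)$ has length exactly $m$ or $m+1$ (so that no truncated fragment can survive in the interior), together with the symmetry of $S_1$ and $S_2$. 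Once this bookkeeping is in place --- most transparently in the two families at hand, where $S_1=(m+1)$ or $S_2=(m)$ is a single run (Example~\ref{example:S-sequence}) --- the reconstitution of $(S_1,S_2)$ is immediate, and the proof is complete.
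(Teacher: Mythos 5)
Your reduction to a pair of \emph{adjacent} faces of opposite types, and your appeal to hypothesis (iii) of Theorem~\ref{structure}, is exactly where the argument breaks down: adjacency of a type-1 face and a type-2 face does not reconstitute an adjacent occurrence of $(S_1,S_2)$ in $CS(s)$. In a type-2 face the occurrence of $S_2$ is flanked on both sides by nonzero terms $\ell,\ell'$, so the subword preceding the $S_2$-block is necessarily nonempty; together with the (possibly nonempty) subword trailing the $S_1$-block of the type-1 face, it separates the two blocks across the common vertex. Concretely, for $r=2/5$ (so $S_1=(3)$, $S_2=(2)$, $m=2$), a type-1 arc with $S$-sequence $(3)$ followed by a type-2 arc with $S$-sequence $(1,2,1)$ yields, after the junction, either $(3,1,2,1)$ or, if the end runs merge, $(4,2,1)$; in neither case does $(S_1,S_2)=(3,2)$ occur consecutively, so hypothesis (iii) produces no contradiction. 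Your proposed repair --- that every interior run of $\phi(\alpha)$ has length exactly $m$ or $m+1$ --- is not something you may assume (under Hypothesis~B alone one can have $CS(s)=\lp 5,5 \rp$ for $r=2/5$; see Case~1.a of Section~\ref{proof_for_twist_links_1}), and even where such a bound does hold, a single intervening run of length $m$ or $m+1$ between the two blocks is consistent with it and still destroys adjacency. So the central ``reconstitution'' step of your proof fails.

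The missing observation is that no adjacency is needed. The paper's proof is a few lines: if type~1 holds for some face $D_j$ and type~2 for some face $D_{j'}$, then $CS(\phi(\alpha))=CS(u_s)=CS(s)$ contains both $S_1$ and $S_2$ as subsequences (the $S_2$-occurrence is protected in the interior by $\ell,\ell'$), and Proposition~\ref{connection} --- the refinement of \cite[Lemma~7.3]{lee_sakuma} proved in Section~\ref{preliminaries} precisely for this purpose --- then gives $s\notin I_1(r)\cup I_2(r)$, contradicting Hypothesis~A. You invoke Proposition~\ref{connection} only to justify hypothesis (iii) of Theorem~\ref{structure}, but applied directly it handles arbitrarily separated occurrences of $S_1$ and $S_2$ in the cyclic $S$-sequence, which is what the mixed-type configuration actually provides, and it renders your entire local analysis at the common vertex unnecessary.
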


\begin{proof} Suppose on the contrary that
(1) holds for $j$ and (2) holds for $j' \neq j$.
Then $CS(\phi(\alpha))=CS(u_s)=CS(s)$ contains both $S_1$ and $S_2$ as subsequences.
By \cite[Proposition~3.19(1)]{lee_sakuma_2}, $s \notin I_1(r) \cup I_2(r)$,
contradicting the hypothesis of the lemma.
\end{proof}

\subsection{Hypothesis~B} Assuming the following Hypothesis~B,
we will establish several technical lemmas concerning important properties
of $CS(\phi(\alpha))=CS(u_s)=CS(s)$.

\begin{hypothesisB}
\label{hyp:(1)holds}
{\rm
Suppose under Hypothesis~A that
Lemma~\ref{lem:boundary_layer}(1) holds,
namely, for every face $D_i$ of $J$,
suppose that $S(\phi(\partial D_i^+))$ contains
$S_1$ as its subsequence.
Then we can decompose the word $\phi(\alpha)$
(clearly $(u_s) \equiv (\phi(\alpha))$) into
\[
\phi(\alpha) \equiv y_1 w_1 z_1 y_2 w_2 z_2 \cdots y_t w_t z_t,
\]
where $\phi(\partial D_i^+) \equiv \phi(e_{2i-1}e_{2i}) \equiv y_iw_iz_i$,
$y_i$ and $z_i$ may be empty, $S(w_i)=S_1$, and where
$S(y_i w_i z_i)=(S(y_i), S_1, S(z_i))$
(here $S(y_i)$ and $S(z_i)$ are possibly empty), for every $i$.
By Lemma~\ref{lem:two_cases}, we also have the decomposition of the word $\phi(\delta^{-1})$
as follows (clearly $(u_{s'}^{\pm 1}) \equiv (\phi(\delta^{-1}))$ if $J=M$):
\[
\phi(\delta^{-1}) \equiv y_1' w_1' z_1' y_2' w_2' z_2' \cdots y_t' w_t' z_t',
\]
where $\phi(\partial D_i^-) \equiv \phi(e_{2i-1}'e_{2i}') \equiv y_i'w_i'z_i'$,
$y_i'$ and $z_i'$ may be empty, $S(w_i')=S_1$, and where
$S(y_i' w_i' z_i')=(S(y_i'), S_1, S(z_i'))$
(here $S(y_i')$ and $S(z_i')$ are possibly empty), for every $i$.
Then $S({y_i'}^{-1} y_i)=S(z_i z_i'^{-1})=S_2$ for every $i$.
}
\end{hypothesisB}

\begin{notation}
{\rm
Let $v$ be a reduced word in $\{a, b\}$.
If $v$ is not an empty word, then, by $v_b$ and $v_e$, respectively,
we denote a beginning subword and an ending subword of $v$
such that $|v_b|$ is the first term of the sequence $S(v)$
and $|v_e|$ is the last term of $S(v)$.
On the other hand, if $v$ is empty,
then $v_b$ and $v_e$ are also empty words.
(Though similar symbols, $v_{ib}$
and $v_{ie}$ ($1\le i\le 4$),
are used in different meanings
in \cite[Lemma~3.24]{lee_sakuma_2},
we believe this does not cause any confusion,
because these symbols are not used
in the remainder of this paper.)
}
\end{notation}

\begin{remark}
\label{rem:(1)holds}
{\rm
(1) If $r=[2, 2]=2/5$, then, by \cite[Example~3.17(2)]{lee_sakuma_2},
$CS(r)=\lp 3, 2, 3, 2 \rp$,
where $S_1=(3)$ and $S_2=(2)$.
So, in Hypothesis~B,
both $S(\phi(\partial D_i^+))$ and
$S(\phi(\partial D_i^-))$
are exactly of the form $(\ell, 3, \ell')$, where $0 \le \ell, \ell' \le 2$
are integers.

(2) If $r=[2, n]$ with $n \ge 3$, then, again by \cite[Example~3.17(2)]{lee_sakuma_2},
$CS(r)=\lp 3, (n-1) \langle 2 \rangle, 3, (n-1) \langle 2 \rangle \rp$,
where $S_1=(3)$ and $S_2=((n-1) \langle 2 \rangle)$.
So, in Hypothesis~B,
both $S(\phi(\partial D_i^+))$ and $S(\phi(\partial D_i^-))$
are exactly of the form
$(\ell_1, n_1 \langle 2 \rangle, 3, n_2 \langle 2 \rangle, \ell_2)$,
where $0 \le \ell_1, \ell_2 \le 1$ and $0 \le n_1, n_2 \le n-1$
are integers such that if $n_j=n-1$ then $\ell_j$ is necessarily $0$
for $j=1, 2$.
In particular, $S(y_{i,b})=(1)$ or $(2)$ unless $y_i$ is an empty word.
The same is true for $S(z_{i,e})$, $S(y_{i,b}')$ and $S(z_{i,e}')$.

(3) If $r=[2, 1, n]$ with $n \ge 2$, then, by \cite[Example~3.17(1)]{lee_sakuma_2},
$CS(r)=\lp n \langle 3 \rangle, 2, n \langle 3 \rangle, 2 \rp$,
where $S_1=(n \langle 3 \rangle)$ and $S_2=(2)$.
So, in Hypothesis~B,
both $S(\phi(\partial D_i^+))$ and
$S(\phi(\partial D_i^-))$
are exactly of the form $(\ell, n \langle 3 \rangle, \ell')$,
where $0 \le \ell, \ell' \le 2$ are integers.
In particular, $S(w_{i,b})=S(w_{i,e})=(3)$ and $S(w_{i,b}')=S(w_{i,e}')=(3)$.
}
\end{remark}

\begin{lemma}
\label{lem:case1-1(a)}
Let $r=n/(2n+1)=[2,n]$, where $n \ge 2$ is an integer.
Under Hypothesis~B,
suppose that $v$ is a subword of the cyclic word represented by
$\phi(\alpha) \equiv y_1 w_1 z_1 y_2 w_2 z_2 \cdots y_t w_t z_t$
such that $v$ corresponds to a
term of $CS(\phi(\alpha))=CS(s)$.
Then, after a cyclic shift of indices,
$v$ is equal to one of the following subwords:
\[
z_{0, e}w_1w_2\cdots w_qy_{q+1, b}, \quad
z_{0, e}w_1w_2\cdots w_q, \quad
w_1w_2\cdots w_qy_{q+1, b}, \quad
w_1w_2\cdots w_q,
\]
where $q\in \ZZ_+\cup\{0\}$ in the first three cases
and $q\in \ZZ_+$ in the last case.
In each of the above,
the ``intermediate subwords'' are empty;
to be precise, when we say that
$z_{0, e}w_1w_2\cdots w_qy_{q+1, b}$,
for example,
is a subword of $(u_s)$,
we assume that $y_1$, $z_iy_{i+1}$ $(1\le i\le q-1)$ and $z_q$
are empty words.
\end{lemma}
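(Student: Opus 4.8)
Let me first understand what Lemma~\ref{lem:case1-1(a)} claims. We are in the case $r=[2,n]$, so by Remark~\ref{rem:(1)holds}(2) we have $S_1=(3)$ and $S_2=((n-1)\langle 2\rangle)$. Under Hypothesis~B, each $S(\phi(\partial D_i^+))$ contains $S_1=(3)$ as a subsequence, and the decomposition $\phi(\alpha)\equiv y_1w_1z_1\cdots y_tw_tz_t$ has $S(w_i)=S_1=(3)$, meaning each $w_i$ is a single-syllable block of length $3$. A ``component'' of $CS(s)=CS(\phi(\alpha))$ is a maximal single-exponent syllable; the subword $v$ realizing such a component is a maximal run of letters all having the same exponent sign. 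The claim is that any such $v$, after cyclically shifting indices, has one of four listed forms, all of which are essentially a concatenation $w_1w_2\cdots w_q$ of consecutive $S_1$-blocks (each a syllable of length $3$), possibly flanked by the ending syllable $z_{0,e}$ of a preceding $z$-block and/or the beginning syllable $y_{q+1,b}$ of a following $y$-block, and with all intermediate $y$'s and $z$'s empty.

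**Approach.** The plan is to argue directly from the structure of $S(r)=(3,(n-1)\langle2\rangle,3,(n-1)\langle2\rangle)$ together with the refined description in Remark~\ref{rem:(1)holds}(2). The key observation is that a ``component'' $v$ is a maximal run of syllables of the \emph{same sign}; but the $S$-sequence records lengths of syllables that \emph{alternate} in sign, so a single component of $CS(s)$ corresponds to exactly one term of $S(s)$, i.e. $v$ is a single maximal syllable of $\phi(\alpha)$. First I would pin down precisely where, inside the block decomposition $y_iw_iz_i$, the syllable boundaries fall: each $w_i$ has $S(w_i)=(3)$, so $w_i$ is itself a single syllable of length $3$ (one sign); the issue is whether $w_i$ merges with adjacent material (the tail of $z_{i-1}y_i$ on the left, the head of $z_iy_{i+1}$ on the right) to form a longer run of the same sign.

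**Main steps.** The argument would proceed as follows. (i) Using Remark~\ref{rem:(1)holds}(2), note that $S(y_{i,b})$ and $S(z_{i,e})$ are $(1)$ or $(2)$ whenever nonempty, so the syllables flanking $w_i$ are short $S_2$-type syllables (length $1$ or $2$), never another length-$3$ block. (ii) Because consecutive syllables alternate in sign, a maximal same-sign run can only extend across several $w_j$'s if the intervening $z_jy_{j+1}$ carry \emph{no} syllable of the opposite sign between them — which forces those intermediate $y$'s and $z$'s to be empty, exactly as the lemma's ``intermediate subwords are empty'' proviso asserts. (iii) When a run of $w$'s \emph{does} merge into one component, its two ends may additionally absorb a single flanking syllable: on the left the ending syllable $z_{0,e}$ of the last nonempty $z$ before the run, on the right the beginning syllable $y_{q+1,b}$ of the first nonempty $y$ after the run. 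This yields precisely the four listed shapes depending on whether each end absorbs a flanking syllable (the general case $z_{0,e}w_1\cdots w_qy_{q+1,b}$, and the three degenerate cases where one or both flanking syllables are absent). (iv) Finally, I would handle the index ranges: $q\ge 1$ in the last case $w_1\cdots w_q$ since a component must be nonempty and contain at least one $w$, whereas $q=0$ is allowed in the first three cases because there the flanking $z_{0,e}$ or $y_{q+1,b}$ can already supply the whole component.

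**Expected main obstacle.** The delicate point will be step~(ii)–(iii): carefully justifying that a same-sign run cannot absorb \emph{two} flanking syllables on the same side, and that the flanking $S_2$-type syllables (lengths $1$ or $2$) genuinely attach to $w$'s with the matching sign. This is a sign-bookkeeping argument driven by the alternation structure of $S(s)$ and by the fact that each $w_i$ is a single length-$3$ syllable flanked by syllables recorded in $S_2=((n-1)\langle2\rangle)$. I expect the cleanest route is to track the signs of the first and last letters of each $w_i$ and of the short flanking syllables $y_{i,b}$, $z_{i,e}$, and then read off that any maximal same-sign run is forced into exactly one of the four normal forms; the enumeration itself is then routine once the alternation constraint is made explicit.
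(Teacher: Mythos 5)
Your proposal is correct and takes essentially the same approach as the paper: the paper's entire proof consists of the single observation that, by Hypothesis~B, $S(y_iw_iz_i)=(S(y_i),S_1,S(z_i))$, so there is a sign change between $y_i$ and $w_i$ (resp.\ between $w_i$ and $z_i$) whenever $y_i$ (resp.\ $z_i$) is nonempty, and the four normal forms follow from this. Your steps (i)--(iv) merely make explicit the sign bookkeeping the paper leaves to the reader, and the ``main obstacle'' you anticipate is settled exactly by this junction sign-change observation combined with the alternation of syllables inside $z_0$ and $y_{q+1}$, which permits at most one absorbed flanking syllable per side.
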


\begin{proof}
Recall from Hypothesis~B
that $S(y_i w_i z_i)=(S(y_i), S_1, S(z_i))$,
where $S(y_i)$ and $S(z_i)$ are possibly empty and $S_1=(3)$.
In other words, if $y_i$ (resp., $z_i$) is not an empty word,
then there is a sign change between $y_i$ and $w_i$
(resp., between $w_i$ and $z_i$).
The desired result follows from this observation.
\end{proof}

Throughout the remainder of this paper,
we will assume the following convention.

\begin{convention}
\label{con:figure}
{\rm
In Figures~\ref{fig.lemma-1-1-i}--\ref{fig.III-case-2b},
the change of directions of consecutive arrowheads
represents the change from positive (negative, resp.) words
to negative (positive, resp.) words, and
a dot represents a vertex whose position is clearly identified.
Also an Arabic number represents the length of the corresponding positive
(or negative) word.
In Figures~\ref{fig.lemma-1-1-i}--\ref{fig.III-lemma-2-3},
the upper complementary region is regarded as the unbounded region
of $\RR^2-M$.
Thus the outer boundary cycles runs
the upper boundary from left to right.
}
\end{convention}

The following Lemmas~\ref{lem:case1-1(b)} and \ref{lem:case2-1(b)} show
that there are strong restrictions
for the shape of the word $v$ in Lemma~\ref{lem:case1-1(a)}.

\begin{lemma}
\label{lem:case1-1(b)}
Let $r=2/5=[2,2]$. Under Hypothesis~B,
the following hold for every $i$.
\begin{enumerate}[\indent \rm (1)]
\item $S(z_{i}y_{i+1})\ne (3)$.

\item $S(w_iz_iy_{i+1})\ne (4)$ and $S(z_iy_{i+1}w_{i+1})\ne (4)$.

\item If $J \subsetneq M$, then $S(w_iz_iy_{i+1}w_{i+1})\ne (6)$.

\item $S(z_{i-1}y_iw_iz_iy_{i+1})\ne (7)$.

\item If $J=M$, then
$S(z_{i-1}y_iw_iz_iy_{i+1}w_{i+1})\ne (8)$ and
$S(w_{i-1}z_{i-1}y_iw_iz_iy_{i+1})\ne (8)$.

\item $S(w_iz_iy_{i+1}w_{i+1})\ne (3,3)$.
\end{enumerate}
\end{lemma}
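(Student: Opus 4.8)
The plan is to work entirely within the combinatorial framework established in Hypothesis~B for the specific case $r=2/5=[2,2]$, where by Remark~\ref{rem:(1)holds}(1) we have $S_1=(3)$ and $S_2=(2)$. Recall that for every face $D_i$ of $J$ we have the decomposition $\phi(\partial D_i^+)\equiv y_iw_iz_i$ with $S(w_i)=S_1=(3)$, and by Remark~\ref{rem:(1)holds}(1) the sequences $S(\phi(\partial D_i^\pm))$ are exactly of the form $(\ell,3,\ell')$ with $0\le\ell,\ell'\le 2$. The master tool throughout will be Corollary~\ref{cor:max_piece_2}: a subword $w$ of $(u_r^{\pm1})$ is a piece precisely when $S(w)$ contains neither $S_1=(3)$ as a subsequence nor $(\ell_1,S_2,\ell_2)=(\ell_1,2,\ell_2)$ as a subsequence, and by part~(2) it fails to be a product of two pieces whenever $S(w)$ contains $(S_1,S_2)=(3,2)$ as a proper initial or $(S_2,S_1)=(2,3)$ as a proper terminal subsequence. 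The recurring idea is that each of the six forbidden $S$-sequences would force the corresponding subword $v$ of the boundary to be a piece or a bounded product of pieces in a way that violates Convention~\ref{convention}(3), which forbids a degree-$2$ path from collapsing into too few pieces.

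\textbf{Carrying out the individual cases.} For each item I would reconstruct the relevant boundary subword, compute its $S$-sequence, and test it against the piece criterion. For instance, in (1), if $S(z_iy_{i+1})=(3)=S_1$, then the word $z_iy_{i+1}$ reads a full copy of $S_1$, but this subword sits between two faces $D_i$ and $D_{i+1}$ along a degree-$2$ path of the outer boundary; I would argue that having $S_1$ appear here creates an extra occurrence of $S_1$ beyond the two forced by the decomposition, pushing $CS(\phi(\alpha))=CS(s)$ to contain both $S_1$ and $S_2$ and contradicting Proposition~\ref{connection} (as in the proof of Lemma~\ref{lem:boundary_layer}), or else it violates Convention~\ref{convention}(3). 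For (2)--(5), the sequences $(4)$, $(6)$, $(7)$, $(8)$ are single positive (or negative) runs; since $S_1=(3)$, a run of length $\ge 4$ visually contains $(3)$ and hence $S_1$ as a subsequence, so the corresponding subword would contain a second disallowed copy of $S_1$, again either overflowing the allowed count of occurrences of $S_1$ in $CS(s)$ or contradicting Lemma~\ref{lem:vertex_position}. The distinction between the $J=M$ and $J\subsetneq M$ hypotheses in (3) and (5) enters because when $J\subsetneq M$ the inner edges $e_{2i}',e_{2i+1}'$ bound a further face $D_i'$, so the piece structure on the $\delta^{-1}$ side is constrained differently; I would invoke the face $D_i'$ from Hypothesis~A to locate the extra piece.

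\textbf{The subtle case and the main obstacle.} Item (6) is structurally different: it asserts $S(w_iz_iy_{i+1}w_{i+1})\ne(3,3)$, a two-run sequence rather than a single overlong run, and this is where I expect the real difficulty. Here $S(w_i)=S(w_{i+1})=(3)=S_1$ already, so the constraint is that the intermediate part $z_iy_{i+1}$ between the two $S_1$-blocks must be empty (forcing the two positive runs of length $3$ to abut with a single sign change). The subword $w_iz_iy_{i+1}w_{i+1}$ with $S$-sequence $(3,3)$ contains $S_1$ twice with nothing in between, so its $S$-sequence reads $(S_1,S_2,S_1)$ only if a length-$2$ block intervenes---but with $S$-sequence exactly $(3,3)$ there is no $S_2=(2)$ between them, which is itself the anomaly, since $CS(r)=\lp S_1,S_2,S_1,S_2\rp$ always separates consecutive copies of $S_1$ by an $S_2$. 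The plan is to show that such a subword cannot be a product of two pieces (via Corollary~\ref{cor:max_piece_2}(2), reading $(3,2)$ initially or $(2,3)$ terminally after locating the adjacent $S_2$ forced by $CS(r)$), contradicting the degree conditions of Theorem~\ref{structure}(2), under which degree-$2$ and degree-$4$ vertices alternate along $\sigma$. The hard part will be bookkeeping the exact positions of the vertices $e_{2i-1}\cap e_{2i}$ and verifying that the configuration $(3,3)$ is incompatible with the alternating degree pattern; I would handle this by a careful appeal to the explicit maximal-piece list in Lemma~\ref{max_piece_2}, checking that no maximal $2$-piece of $(u_r)$ has $S$-sequence containing $(3,3)$ in the required position.
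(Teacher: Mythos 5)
Your proposal misidentifies the mechanism by which these configurations are excluded, and the central step you rely on is false under the paper's definitions. You claim that a run of length $\ge 4$ ``visually contains $(3)$ and hence $S_1$ as a subsequence,'' so that the subword carries ``a second disallowed copy of $S_1$,'' overflowing ``the allowed count of occurrences of $S_1$ in $CS(s)$.'' Three problems: first, subsequence containment in this paper is exact-match and without leap, so a term $4$ in an $S$-sequence does \emph{not} contain $(3)$ as a subsequence; second, the bound of Proposition~\ref{sequence}(2) on occurrences of $S_1$ concerns $CS(r)$, not $CS(s)$ --- there is no a priori bound on terms $3$ or $3+c$ in $CS(s)$ (indeed Hypothesis~B forces such terms to appear); third, and decisively, your argument would prove too much: it would equally exclude $S(z_iy_{i+1})=(4)$ with $|z_i|=|y_{i+1}|=2$, a configuration the lemma deliberately does \emph{not} forbid and which genuinely arises later (it produces $CS(s)=\lp 4,3,4,3\rp$ in Case~1.c, where the contradiction requires a separate, deeper analysis). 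Similarly, your plan for item (6) applies Corollary~\ref{cor:max_piece_2} and Convention~\ref{convention}(3) to the word $w_iz_iy_{i+1}w_{i+1}$, but this word spans two distinct faces $D_i$, $D_{i+1}$ of $J$ and is a subword of the outer boundary label $(u_s)$, not of the cyclic word $(u_r^{\pm 1})$; the piece criteria simply do not govern it.

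What the paper actually does, in every item, is transfer the hypothesis to the opposite side of the layer $J$: since each face satisfies $CS(\phi(\partial D_i))=CS(2/5)=\lp 3,2,3,2\rp$, the assumed $S$-sequence on $\partial D_i^+$ determines the complementary weights on $\partial D_i^-$, hence constrains the label of $\delta$. If $J=M$, that label is $u_{s'}^{\pm 1}$, and the forced pattern (e.g.\ a term $1$ together with a term $3+c$ in items (1)--(2), a term $3$ together with $5+c$ in (4)--(5), a term $2$ together with $3+c$ in (6)) contradicts Lemma~\ref{properties} --- with the one subtle subcase in (6), $c=0$, handled instead by Proposition~\ref{connection}. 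If $J\subsetneq M$, one uses Lemma~\ref{lem:vertex_position} to pin the vertices of the next layer inside specific segments of $\partial D_i^-$, so that the two consecutive edges $e_2'e_3'$ of the next face $D_1'$ acquire an $S$-sequence of the form $(\ell_1,1,\ell_2)$, $(\ell_1,4,\ell_2)$ or $(\ell_1,2,2,\ell_2)$, impossible inside $CS(\phi(\partial D_1'))=\lp 3,2,3,2\rp$; item (4) even requires pushing this through a second layer $J_1$. None of these computations appears in your proposal, and the $J=M$ versus $J\subsetneq M$ split in items (3) and (5) reflects exactly this dichotomy of contradiction sources, not a difference in ``piece structure on the $\delta^{-1}$ side.'' As written, the proposal would not close any of the six items.
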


\begin{proof}
(1) Suppose on the contrary that $S(z_iy_{i+1})=(3)$ for some $i$.
Without loss of generality, we may assume $S(z_1y_2)=(3)$.
Then, since $0\le |z_1|, |y_2| \le 2$,
we have either $(|z_1|, |y_2|)=(1, 2)$
or $(|z_1|, |y_2|)=(2, 1)$.
We now assume $(|z_1|, |y_2|)=(2, 1)$.
(The other case can be treated similarly.)
Then by using the fact that $CS(\phi(\partial D_i))=CS(2/5)=\lp 3,2,3,2 \rp$,
we see that
$J$ is locally as illustrated in Figure~\ref{fig.lemma-1-1-i}(a)
which follows Convention~\ref{con:figure}.
The Arabic numbers
$3$, $2$, $1$ and $3$ near the upper boundary
represent the lengths of the words $w_1$, $z_1$, $y_2$ and $w_2$,
respectively,
whereas the Arabic numbers
$3$, $1$ and $3$ near the lower boundary
represent the lengths of the words $w_1'$, $y_2'$ and $w_2'$
respectively (in particular $|z_1'|=0$),
and the change of directions of consecutive arrowheads
represents the change from positive (negative, resp.) words
to negative (positive, resp.) words.

Suppose first that $J=M$.
Then we see from
Figure~\ref{fig.lemma-1-1-i}(a) that
$CS(\phi(\delta^{-1}))=CS(u_{s'}^{\pm 1})=CS(s')$
involves both a term $1$ and a term of the form $3+c$
with $c \in \ZZ_+ \cup \{0\}$, contradicting \cite[Lemma~3.8]{lee_sakuma_2}
which says that either $CS(s')$ is equal to $\lp m,m \rp$ or $CS(s')$
consists of $m$ and $m+1$ for some $m\in\ZZ_+$.
Suppose next that $J \subsetneq M$.
Then by Lemma~\ref{lem:vertex_position}(1),
none of $S(\phi(e_{1}'))$ and $S(\phi(e_{2}'))$ contains
$S_1=(3)$ as a subsequence.
This means that
the initial vertex of $e_2'$ lies in the interior
of the segment of $\partial D_1^-$ with weight $3$.
(See Figure~\ref{fig.lemma-1-1-i}(b),
where the initial vertex of $e_2'$ is the left-most vertex.)
Similarly, the terminal vertex of $e_3'$
lies in the interior
of the segment of $\partial D_2^-$ with weight $3$;
in particular, it does not lie in
the segment of $\partial D_2^-$ with weight $1$.
Hence, we see from Figure~\ref{fig.lemma-1-1-i}(b)
that $S(\phi(e_2'e_3'))$ is of the form $(\ell_1, 1, \ell_2)$
with $\ell_1, \ell_2 \in \ZZ_+$.
This yields that a term $1$
occurs in $CS(\phi(\partial D_1'))=CS(2/5)=\lp 3, 2, 3, 2\rp$,
which is obviously a contradiction.

\begin{figure}[h]
\includegraphics{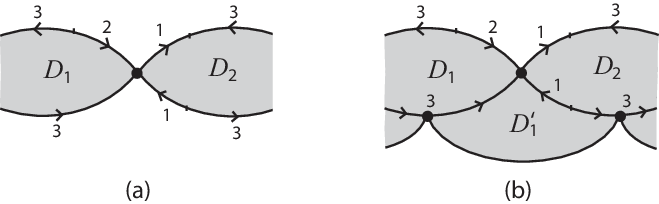}
\caption{
Lemma~\ref{lem:case1-1(b)}(1) where $S(z_1y_2)=(2+1)$}
\label{fig.lemma-1-1-i}
\end{figure}

(2) Suppose on the contrary that $S(w_1z_1y_2)=(4)$.
(The other case is treated similarly.)
Then, since $w_1$ and $z_1$ have different signs when $z_1$ is non-empty
and since $|w_1|=3$ and $0\le |z_1|, |y_2| \le 2$,
the only possibility is that $|z_1|=0$ and $S(w_1y_2)=(4)$.
If $J=M$,
then we see from Figure~\ref{fig.lemma-1-2}(a) that
$CS(s')$ involves both a term $1$ and a term of the form $3+c$
with $c \in \ZZ_+ \cup \{0\}$, contradicting \cite[Lemma~3.8]{lee_sakuma_2}.
On the other hand, if $J \subsetneq M$,
then we see, by using Lemma~\ref{lem:vertex_position}(1)
as in the proof of Lemma~\ref{lem:case1-1(b)}(1),
that $S(\phi(e_2'e_3'))$ is of the form
$(\ell_1, 2, 1, \ell_2)$
with $\ell_1, \ell_2 \in \ZZ_+$
(see Figure~\ref{fig.lemma-1-2}(b)).
This implies that $CS(\phi(\partial D_1'))=CS(2/5)$ has a term $1$,
a contradiction.

\begin{figure}[h]
\includegraphics{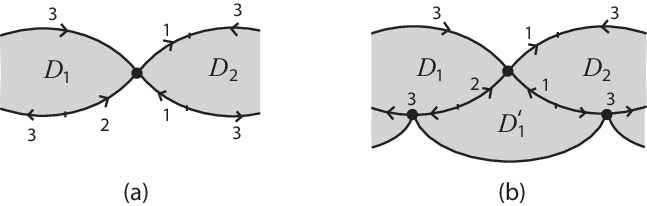}
\caption{
Lemma~\ref{lem:case1-1(b)}(2) where $S(w_1z_1y_2)=(3+0+1)$}
\label{fig.lemma-1-2}
\end{figure}

(3) Suppose  $J \subsetneq M$ and suppose on the contrary that
$S(w_0z_0y_{1}w_{1})= (6)$.
Then $|z_0|=|y_1|=0$ and
we see, by using Lemma~\ref{lem:vertex_position}(1)
as in the proof of Lemma~\ref{lem:case1-1(b)}(1),
that
$S(\phi(e_2'e_3'))$ is of the form $(\ell_1, 4, \ell_2)$
with $\ell_1, \ell_2 \in \ZZ_+$, as illustrated in
Figure~\ref{fig.lemma-1-4b(a)}.
This implies that $CS(\phi(\partial D_1'))=CS(2/5)$ contains a term $4$,
a contradiction.

\begin{figure}[h]
\includegraphics{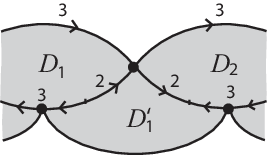}
\caption{
Lemma~\ref{lem:case1-1(b)}(3)
where $S(w_0z_0y_{1}w_{1})=(3+0+0+3)$
}
\label{fig.lemma-1-4b(a)}
\end{figure}

(4) Suppose on the contrary that
$S(z_{0}y_1w_1z_1y_{2})= (7)$.
Then we have $|z_0|=|y_2|=2$ and $|y_1|=|z_1|=0$.
If $J=M$, then
we see from Figure~\ref{fig.lemma-1-5}(a) that
$CS(s')$ includes both a term $3$ and a term of the form
$5+c$ with $c \in \ZZ_+ \cup \{0\}$,
contradicting \cite[Lemma~3.8]{lee_sakuma_2}.
So, we must have $J \subsetneq M$.
Note that $S(\phi(\partial D_1^-))=(2,3,2)$
and that the terminal vertex of $e_1'$ lies in the interior
of the segment of $\partial D_1^-$ with weight $3$,
by Lemma~\ref{lem:vertex_position}(1).
We may assume the vertex divides the segment
into segments with weights $1$ and $2$
as in Figure~\ref{fig.lemma-1-5}(b).
(The other case where the weights of $1$ and $2$ are interchanged
is treated similarly.)
Then, as illustrated in Figure~\ref{fig.lemma-1-5}(b),
$CS(\phi(\delta_1^{-1}))$ includes a subsequence $(3, 1, 3)$,
where $\delta_1$ is an inner boundary cycle of
the outer boundary layer, say $J_1$, of $M-J$.
If $M=J \cup J_1$, then $CS(\phi(\delta_1^{-1}))=CS(s')$ contains
both a term $1$ and a term $3$, contradicting
\cite[Lemma~3.8]{lee_sakuma_2}.
On the other hand, if $M \supsetneq J \cup J_1$,
then,
by using the same argument as in the case $M \supsetneq J$ of (1)
replacing the consideration of $2$-cells $D_1$ and $D_2$ in Figure~\ref{fig.lemma-1-1-i}(b)
with the consideration of
$2$-cells $D_0'$ and $D_1'$ in Figure~\ref{fig.lemma-1-5}(b),
respectively, we obtain a contradiction.

\begin{figure}[h]
\includegraphics{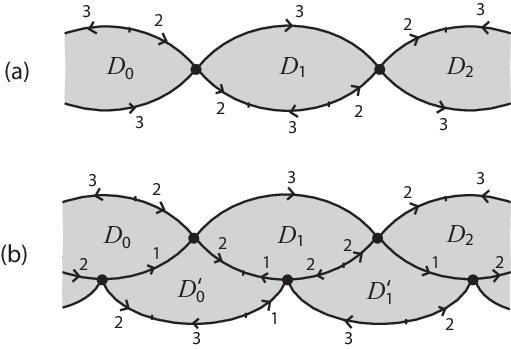}
\caption{
Lemma~\ref{lem:case1-1(b)}(4)
where $S(z_{0}y_1w_1z_1y_{2})=(2+0+3+0+2)$}
\label{fig.lemma-1-5}
\end{figure}

(5) Suppose on the contrary that $J=M$ and
$S(z_0y_1w_1z_1y_2w_2)= (8)$.
(The other case is treated similarly.)
Then $|z_0|=2$, $|y_1|=|z_1|=|y_2|=0$,
and we see from Figure~\ref{fig.lemma-1-6} that
$CS(s')$ includes both a term $3$ and a term of the form $5+c$
with $c \in \ZZ_+ \cup \{0\}$, contradicting \cite[Lemma~3.8]{lee_sakuma_2}.

\begin{figure}[h]
\includegraphics{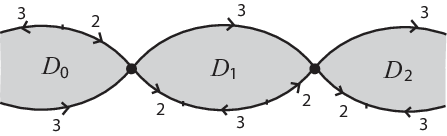}
\caption{
Lemma~\ref{lem:case1-1(b)}(5)
where $S(z_0y_1w_1z_1y_2w_2)=(2+0+3+0+0+3)$}
\label{fig.lemma-1-6}
\end{figure}

(6) Suppose on the contrary that
$S(w_1z_1y_{2}w_{2})= (3,3)$.
Then $|z_1|=|y_2|=0$.
If $J=M$ (see Figure~\ref{fig.lemma-1-1-ii}(a)),
then $CS(s')$ contains both a term $2$ and a term of the form $3+c$
with $c \in \ZZ_+ \cup \{0\}$.
Here, if $c=0$, then $s' \notin I_1(2/5) \cup I_2(2/5)$ by \cite[Proposition~3.19(1)]{lee_sakuma_2},
contradicting the hypothesis of the theorem,
while if $c>0$, then we have a contradiction to \cite[Lemma~3.8]{lee_sakuma_2}.
On the other hand, if $J \subsetneq M$ (see Figure~\ref{fig.lemma-1-1-ii}(b)),
then we see,
by using Lemma~\ref{lem:vertex_position}(1)
as in the proof of Lemma~\ref{lem:case1-1(b)}(1), that
$S(\phi(e_2'e_3'))$ is of the form $(\ell_1, 2, 2, \ell_2)$
with $\ell_1, \ell_2 \in \ZZ_+$.
This implies that a subsequence $(2, 2)$ occurs
in $CS(\phi(\partial D_1'))=CS(2/5)$, which is a contradiction.
\end{proof}

\begin{figure}[h]
\includegraphics{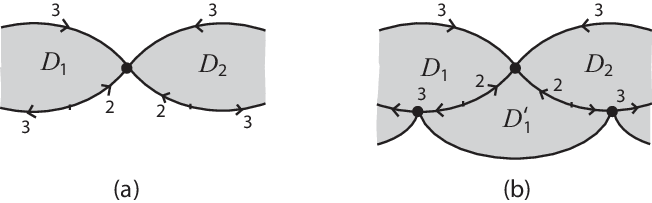}
\caption{
Lemma~\ref{lem:case1-1(b)}(6) where
$S(w_1z_1y_{2}w_{2})=(3,3)$}
\label{fig.lemma-1-1-ii}
\end{figure}

\begin{lemma}
\label{lem:case1-1}
Let $r=2/5=[2,2]$. Under Hypothesis~B,
the following hold, where $d \in \ZZ_+ \cup \{0\}$.
\begin{enumerate}
\item[\indent \rm (1)] No two consecutive terms of $CS(s)$ can be $(3, 3)$.

\item[\rm (2)] No two consecutive terms of $CS(s)$ can be $(4, 4)$.

\item[\rm (3a)] No two consecutive terms of $CS(s)$ can be of the form $(6+3d, 6+3d)$.

\item[\rm (3b)] If $J \subsetneq M$, then
no term of $CS(s)$ can be of the form $6+d$.

\item[\rm (4)] No term of $CS(s)$ can be of the form $7+3d$.

\item[\rm (5)] No term of $CS(s)$ can be of the form $8+3d$.
\end{enumerate}
\end{lemma}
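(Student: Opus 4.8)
The plan is to read off, from the decomposition $\phi(\alpha)\equiv y_1w_1z_1\cdots y_tw_tz_t$ of Hypothesis~B together with Lemma~\ref{lem:case1-1(a)}, precisely which integers can occur as terms of $CS(s)=CS(\phi(\alpha))$, and then to match each forbidden length or pair of consecutive lengths against the corresponding case of Lemma~\ref{lem:case1-1(b)}. By Remark~\ref{rem:(1)holds}(1) we have $|w_i|=3$ and $|y_i|,|z_i|\in\{0,1,2\}$, so Lemma~\ref{lem:case1-1(a)} shows that every term of $CS(s)$ has the form $3q+a+b$, where $q\ge 0$ and $a=|z_0|,\,b=|y_{q+1}|\in\{0,1,2\}$, the underlying component being $z_0w_1\cdots w_qy_{q+1}$ with the two endpoints possibly absent. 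The whole argument is then a finite case analysis organised by the residue of the length modulo $3$ and by the pair $(a,b)$. The only substantive inputs are: a length-$1$ endpoint abutting some $w_i$ produces a subword with $S$-sequence $(4)$; a length-$2$ endpoint abutting two merged $w_i$'s produces $(8)$; two merged $w_i$'s produce $(6)$; and an oppositely-signed abutting pair $w_i,w_{i+1}$ produces $(3,3)$.

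Carrying this out: for (1), a term $3$ is realised either by a lone $w_i$ or by $z_0y_1$ with $\{|z_0|,|y_1|\}=\{1,2\}$, and the latter is excluded by Lemma~\ref{lem:case1-1(b)}(1); hence two consecutive $3$'s are two lone $w_i,w_{i+1}$ forming $S(w_iz_iy_{i+1}w_{i+1})=(3,3)$, excluded by Lemma~\ref{lem:case1-1(b)}(6). For (2), the length-$4$ components are $z_0w_1$ and $w_1y_2$ (both giving $S=(4)$, excluded by Lemma~\ref{lem:case1-1(b)}(2)) and $z_0y_1$ with $|z_0|=|y_1|=2$; since a $z_0y_1$-component is always immediately followed by a component \emph{beginning} with $w_1$, and the only length-$4$ $w$-initial component is the forbidden $w_1y_2$, no $(4,4)$ can occur. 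For (3b) ($J\subsetneq M$), a term $\ge 6$ has either two merged $w_i$'s, giving $S(w_iz_iy_{i+1}w_{i+1})=(6)$ and excluded by Lemma~\ref{lem:case1-1(b)}(3), or $q=1$ with an endpoint of length $1$ (excluded via $(4)$ by (2)), or $q=1,(a,b)=(2,2)$ of length $7$ (excluded by Lemma~\ref{lem:case1-1(b)}(4)). Parts (4) and (5) are the same bookkeeping: a term $7+3d$ or $8+3d$ always contains a length-$1$ endpoint next to a $w_i$ (use (2)), or the core block $z_{i-1}y_iw_iz_iy_{i+1}$ with $S=(7)$ (use Lemma~\ref{lem:case1-1(b)}(4)), or $z_{i-1}w_iw_{i+1}$ / $w_{i-1}w_iy_{i+1}$ with $S=(8)$ (use Lemma~\ref{lem:case1-1(b)}(5)); and when $J\subsetneq M$ they are already covered by (3b).

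This leaves part (3a), which I expect to be the main obstacle. If $J\subsetneq M$ it is vacuous by (3b), so assume $J=M$. The exclusions above force the only terms that are $\equiv 0\pmod 3$ and $\ge 6$ to be \emph{pure} $w$-runs $w_1\cdots w_q$ with $q\ge 2$: the alternative $a+b=3$ (i.e. $(a,b)=(1,2)$ or $(2,1)$) has an endpoint of length $1$ and is excluded via $(4)$ by Lemma~\ref{lem:case1-1(b)}(2). Since consecutive terms differ by at most $1$ by Lemma~\ref{properties}, two consecutive such runs must have equal length $3q$, which is exactly the form $(6+3d,6+3d)$. To get a contradiction I would pass to the inner boundary via the Hypothesis~B relations $S(z_iz_i'^{-1})=S(y_i'^{-1}y_i)=S_2=(2)$: at each internal junction of a run and at the junction between the two runs ($z_j=y_{j+1}=\emptyset$) one obtains $|z_j'|=|y_{j+1}'|=2$. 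The key point is that an internal junction has $w_i,w_{i+1}$ of the \emph{same} sign whereas the inter-run junction has them of \emph{opposite} sign, so the two junction types produce inner blocks of \emph{different} lengths (one a merged block of length $4$, the other two separate blocks of length $2$); together with the $w_i'$ of length $3$ this forces $CS(s')=CS(\phi(\delta^{-1}))$ to contain the three distinct values $2,3,4$, contradicting Lemma~\ref{properties}. The delicate step, and the real work, is to pin down the sign propagation across a face $D_i$ precisely enough to guarantee that the inter-run junction genuinely splits into two length-$2$ inner blocks rather than merging; this is where the explicit face labelling supplied by Hypothesis~A and Theorem~\ref{structure} must be used.
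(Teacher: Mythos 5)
Your parts (1), (2), (3b), (4) and (5) are correct and essentially identical to the paper's proof: the same enumeration of the possible components of $CS(s)$ via Lemma~\ref{lem:case1-1(a)} with $|w_i|=3$ and $0\le |y_i|,|z_i|\le 2$, matched against the same items of Lemma~\ref{lem:case1-1(b)}. (In (2) the paper organizes the endgame slightly differently---both $4$-terms would have to avoid every $w_i$, hence the pair would sit inside a single $z_iy_{i+1}$ of length at most $4$, contradicting $|v|=8$---but your observation that the component following $z_iy_{i+1}$ must begin with $w_{i+1}$ is an equally valid variant.)

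Part (3a), however, contains a genuine gap, and an unnecessary one. After you correctly reduce both consecutive terms to pure runs $w_1\cdots w_q$ and $w_{q+1}\cdots w_{2q}$ (so that $z_q$ and $y_{q+1}$ are empty), you launch a fresh inner-boundary argument and explicitly leave its crux unproven (``the delicate step, and the real work, is to pin down the sign propagation\dots''). But no new diagrammatic work is needed: since the two runs are \emph{distinct consecutive terms} of $CS(s)$, the sign changes at their junction, so $S(w_qz_qy_{q+1}w_{q+1})=S(w_qw_{q+1})=(3,3)$, which is exactly the configuration forbidden by Lemma~\ref{lem:case1-1(b)}(6)---the very lemma you invoked in part (1) for the same configuration of two lone $w$'s. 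This is the paper's two-line conclusion of (3a), and it requires no case split on $J=M$ versus $J\subsetneq M$ (your reduction of the latter case to (3b) is correct but superfluous). For what it is worth, your sketch is completable: in the configuration at hand every $y_i,z_i$ is empty, so $|y_i'|=|z_i'|=2$, and since the face boundary cyclic word $w_i\,z_i'^{-1}w_i'^{-1}y_i'^{-1}$ has blocks of lengths $3,2,3,2$ and cyclic $S$-sequence $\lp 3,2,3,2\rp$, the sign of every block of $\partial D_i^-$ is forced by the sign of $w_i$; internal junctions then merge $z_i'y_{i+1}'$ into a block of length $4$ while the inter-run junctions split as $(2,2)$, so $CS(s')$ would contain $2$, $3$ and $4$, contradicting Lemma~\ref{properties}. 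But this amounts to re-proving the $J=M$ case of Lemma~\ref{lem:case1-1(b)}(6) by hand, and as submitted your proof of (3a) is incomplete.
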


\begin{proof}
(1) Suppose on the contrary that $CS(\phi(\alpha))=CS(s)$ contains $(3, 3)$
as a subsequence.
Let $v=v'v''$ be a subword of the cyclic word $(u_s)$
corresponding to a subsequence $(3, 3)$,
where $S(v')=S(v'')=(3)$.
By using Lemma~\ref{lem:case1-1(a)}
and the facts that $0 \le |z_i|, |y_i| \le 2$ and $|w_i|=3$,
we see that one
of the following holds after a shift of indices.
\begin{enumerate}[\indent \rm (i)]
\item $(v',v'')=(z_1y_2,w_2)$, where $S(z_1y_2)=(3)$.

\item $(v',v'')=(w_1,z_1y_2)$, where $S(z_1y_2)=(3)$.

\item $(v',v'')=(w_1,w_2)$.
\end{enumerate}
However, (i) and (ii) are impossible
by Lemma~\ref{lem:case1-1(b)}(1),
and (iii) is impossible
by Lemma~\ref{lem:case1-1(b)}(6).

(2) Suppose on the contrary that $CS(s)$
contains $(4, 4)$ as a subsequence.
Let $v=v'v''$ be a subword of the cyclic word $(u_s)$
corresponding to a subsequence $(4, 4)$,
where $S(v')=S(v'')=(4)$.
If $v'$ contains $w_i$ for some $i$,
then we see,
by using Lemma~\ref{lem:case1-1(a)}
and the identity $|w_i|=3$, that
either $v'=w_iz_iy_{i+1}$ with
$(|z_i|,|y_{i+1}|)=(0,1)$
or
$v'=z_{i-1}y_{i}w_{i}$ with
$(|z_{i-1}|,|y_{i}|)=(1,0)$.
However both cases are impossible by
Lemma~\ref{lem:case1-1(b)}(2).
Thus $v'$ cannot contain $w_i$.
Since $S(v')=(4)$ is a term of $CS(s)$,
this implies that $v'$ is disjoint from $w_i$
for every $i$.
The same conclusion also holds for $v''$,
and hence for $v=v'v''$.
Thus $v$ is a subword of $z_iy_{i+1}$ for some $i$.
This is a contradiction,
because $|v|=8$ whereas $|z_iy_{i+1}|\le 4$.

(3a) Suppose on the contrary that $CS(s)$
contains $(6+3d, 6+3d)$ as a subsequence.
Let $v=v'v''$ be a subword of the cyclic word $(u_s)$
corresponding to a subsequence $(6+3d, 6+3d)$,
where $S(v')=S(v'')=(6+3d)$.
By using Lemma~\ref{lem:case1-1(a)}
and the facts that $0\le |y_i|, |z_i| \le 2$ and $|w_i|=3$,
we see that one of the following holds
after a cyclic shift of indices.
\begin{enumerate}[\indent \rm (i)]
\item $v'=w_1w_2\cdots w_q$ with $q=d+2$.

\item $v'=z_0w_1w_2\cdots w_qy_{q+1}$ with $q=d+1$,
where $(|z_0|,|y_{q+1}|)=(1,2)$ or $(2,1)$.
\end{enumerate}
If (ii) holds, then either $S(z_0y_1w_1)=(4)$ or
$S(w_qz_qy_{q+1})=(4)$,
a contradiction to Lemma~\ref{lem:case1-1(b)}(2).
So (i) holds.
By applying the same argument to $v''$
and by using the fact that $v'v''$ is a subword of $(u_s)$,
we see that
$v''=w_{q+1}w_{q+2}\cdots w_{2q}$,
where $z_qy_{q+1}$ is empty.
Hence we see
$S(w_qz_qy_{q+1}w_{q+1})=S(w_qw_{q+1})=(3,3)$.
This contradicts Lemma~\ref{lem:case1-1(b)}(6).

(3b) Suppose $J \subsetneq M$ and suppose on the contrary that
$CS(s)$ contains a term $6+d$.
First suppose that $CS(s)$ contains a term $6$.
Let $v$ be a subword of the cyclic word $(u_s)$
corresponding to a term $6$.
By arguing as in the proof of Lemma~\ref{lem:case1-1}(3a),
we see that one of the following holds.
\begin{enumerate}[\indent \rm (i)]
\item $v=w_1w_2$.

\item $v=z_0w_1y_{2}$,
where $(|z_0|,|y_{q+1}|)=(1,2)$ or $(2,1)$.
\end{enumerate}
However, (i) is impossible by Lemma~\ref{lem:case1-1(b)}(3),
and (ii) is impossible by Lemma~\ref{lem:case1-1(b)}(2),
as in the proof of Lemma~\ref{lem:case1-1}(3a).

Next suppose that $CS(s)$ involves a term $7$.
Let $v$ be a subword of the cyclic word $(u_s)$
corresponding to a term $7$.
Arguing as in the proof of Lemma~\ref{lem:case1-1}(3a),
we may assume that one of the following holds.
\begin{enumerate}[\indent \rm (i)]
\item $v=z_0w_1w_2$ with $|z_0|=1$.

\item $v=w_1w_2y_3$ with $|y_3|=1$.

\item $v=z_0w_1y_{2}$,
where $|z_0|=|y_2|=2$.
\end{enumerate}
However, (i) and (ii) are impossible by Lemma~\ref{lem:case1-1(b)}(2),
and (iii) is impossible by Lemma~\ref{lem:case1-1(b)}(4).

Finally suppose that $CS(s)$ contains a term of the form $8+d$.
Let $v$ be a subword of the cyclic word $(u_s)$
corresponding to a term $8+d$.
By using Lemma~\ref{lem:case1-1(a)}
and the facts that $0\le |y_i|, |z_i| \le 2$ and $|w_i|=3$,
we see that $v$ must contain a subword $w_iw_{i+1}$ for some $i$.
This contradicts Lemma~\ref{lem:case1-1(b)}(3).

(4) By Lemma~\ref{lem:case1-1}(3b),
it remains to prove the assertion for $J=M$.
Suppose $J=M$ and
suppose on the contrary that $CS(s)$ contains a term
of the form $7+3d$.
Let $v$ be a subword of the cyclic word $(u_s)$
corresponding to a term $7+3d$.
Arguing as in the proof of Lemma~\ref{lem:case1-1}(3a),
we may assume that one of the following holds.
\begin{enumerate}[\indent \rm (i)]
\item $v=z_0w_1w_2\cdots w_q$ where $|z_0|=1$ and $q=2+d$.

\item $v=w_1w_2\cdots w_q y_{q+1}$ where $|y_{q+1}|=1$ and $q=2+d$.

\item $v=z_0w_1w_2\cdots w_q y_{q+1}$,
where $|z_0|=|y_{q+1}|=2$ and $q=1+d$.
\end{enumerate}
However, (i) and (ii) are impossible by
Lemma~\ref{lem:case1-1(b)}(2), and
(iii) is impossible by Lemma~\ref{lem:case1-1(b)}(4) and (5).

(5) By Lemma~\ref{lem:case1-1}(3b),
it remains to prove the assertion for $J=M$.
Suppose $J=M$ and
suppose on the contrary that $CS(s)$ contains a term
of the form $8+3d$.
Let $v$ be a subword of the cyclic word $(u_s)$
corresponding to a term $8+3d$.
Arguing as in the proof of Lemma~\ref{lem:case1-1}(3a),
we may assume that one of the following holds.
\begin{enumerate}[\indent \rm (i)]
\item $v=z_0w_1w_2\cdots w_q$ where $|z_0|=2$ and $q=2+d$.

\item $v=w_1w_2\cdots w_q y_{q+1}$ where $|y_{q+1}|=2$ and $q=2+d$.

\item $v=z_0w_1w_2\cdots w_q y_{q+1}$,
where $|z_0|=|y_{q+1}|=1$ and $q=2+d$.
\end{enumerate}
However, (i) and (ii) are impossible by Lemma~\ref{lem:case1-1(b)}(5),
and (iii) is impossible by Lemma~\ref{lem:case1-1(b)}(2).
\end{proof}

\begin{corollary}
\label{cor:case1-1}
Let $r=2/5=[2,2]$. Under Hypothesis~B,
$CS(s)$ satisfies one of the following conditions.
\begin{enumerate}[\indent \rm (1)]
\item $CS(s)=\lp 5,5 \rp$.

\item $CS(s)$ has the form consisting of $m$ and $m+1$,
where $m$ is one of $2, 3, 4$ and $5$.
\end{enumerate}
\end{corollary}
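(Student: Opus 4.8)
The plan is to read off the admissible shapes of $CS(s)$ by feeding the numerical constraints of Lemma~\ref{lem:case1-1} into the general description of cyclic $S$-sequences furnished by Lemma~\ref{properties}. First I would recall that Lemma~\ref{properties}, applied to the slope $s$, forces $CS(s)$ into exactly one of two shapes: either $CS(s)=\llangle m,m\rrangle$ for some positive integer $m$ (the case $s=1/m$), or $CS(s)$ is composed of the two consecutive values $m$ and $m+1$, both of which actually occur (since for $k\ge 2$ the sequence begins with $m+1$ and ends with $m$). In either case the largest term of $CS(s)$ is $m$, respectively $m+1$, so the whole task reduces to pinning down the admissible values of $m$.

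The first, and most delicate, step is to extract a lower bound on the largest term, and this is exactly where Hypothesis~B is essential. Under Hypothesis~B the outer boundary label decomposes as $\phi(\alpha)\equiv y_1w_1z_1\cdots y_tw_tz_t$ with $S(w_1)=S_1=(3)$ for $r=2/5$ (Remark~\ref{rem:(1)holds}(1)); thus $w_1$ is a single run of three like-signed letters. Passing to the cyclic word $(\phi(\alpha))$ can only merge $w_1$ into a longer like-signed run, never shorten it, so $CS(s)=CS(\phi(\alpha))$ has a term $\ge 3$. Hence the largest term of $CS(s)$ is at least $3$, which forces $m\ge 3$ in the all-equal case and $m\ge 2$ in the two-value case. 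This already discards the otherwise-possible sequences $\llangle 1,1\rrangle$, $\llangle 2,2\rrangle$, and every sequence built only from $1$'s and $2$'s (for instance $CS(2/3)=\llangle 2,1,2,1\rrangle$, which is legal for $s\in I_2(2/5)$ in the absence of Hypothesis~B).

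Next I would impose the upper constraints. Lemma~\ref{lem:case1-1}(4) and (5) say that no term of $CS(s)$ is of the form $7+3d$ or $8+3d$; reducing modulo $3$, every term that is at least $6$ must be divisible by $3$. In the two-value case, where $CS(s)$ consists of the consecutive values $m$ and $m+1$, this is decisive: whenever $m\ge 6$ one of the two occurring values $m,m+1$ is $\ge 7$ and not divisible by $3$, hence forbidden. Therefore $m\le 5$, and combined with $m\ge 2$ this yields $m\in\{2,3,4,5\}$, which is conclusion~(2).

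It remains to treat the all-equal case $CS(s)=\llangle m,m\rrangle$ with $m\ge 3$. Here every pair of cyclically adjacent terms equals $(m,m)$, so I would invoke the pair restrictions: $(3,3)$ is excluded by Lemma~\ref{lem:case1-1}(1), $(4,4)$ by Lemma~\ref{lem:case1-1}(2), and $(6+3d,6+3d)$ by Lemma~\ref{lem:case1-1}(3a). Since items (4) and (5) already confine any $m\ge 6$ to the values $6+3d$, these three exclusions remove $m=3$, $m=4$, and every $m\ge 6$, leaving only $m=5$, i.e.\ $CS(s)=\llangle 5,5\rrangle$, which is conclusion~(1). The step I expect to require the most care is the lower-bound observation of the second paragraph: one must argue cleanly that the length-three block $w_1$ survives (indeed can only grow) when one forms the cyclic $S$-sequence, and that $CS(\phi(\alpha))=CS(s)$; once this is in place, the remaining work is elementary arithmetic modulo $3$ driven entirely by the already-established Lemma~\ref{lem:case1-1}.
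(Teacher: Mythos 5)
Your proposal is correct and follows essentially the paper's own argument: the dichotomy from Lemma~\ref{properties}, the lower bound $3+c$ on some term of $CS(s)$ coming from the block $w_i$ with $S(w_i)=S_1=(3)$ under Hypothesis~B (Remark~\ref{rem:(1)holds}(1)), and the exclusion of $m=3$, $4$, $6+3d$, $7+3d$, $8+3d$ in the case $CS(s)=\lp m,m \rp$ and of $7+3d$, $8+3d$ in the two-value case via Lemma~\ref{lem:case1-1}. Your explicit mod-$3$ packaging of the upper bound is only a cosmetic reorganization of the same arithmetic the paper performs.
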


\begin{proof}
By \cite[Lemma~3.8]{lee_sakuma_2},
either $CS(s)=\lp m,m \rp$ or $CS(s)$ consists of $m$ and $m+1$
with $m\in \ZZ_+$.
By Hypothesis~B together with
Remark~\ref{rem:(1)holds}(1), $\phi(\alpha)$ involves
a subword $w_i$ whose $S$-sequence is $(3)$,
so $CS(\phi(\alpha))=CS(u_s)=CS(s)$ must contain
a term of the form $3+c$, where $c \in \ZZ_+ \cup \{0\}$.
If $CS(s)=\lp m,m \rp$, then $m\ge 3$ by this observation.
Moreover, by Lemma~\ref{lem:case1-1},
$m$ is not equal to $3, 4, 6+3d, 7+3d$ nor $8+3d$
for any $d \in \ZZ_+ \cup \{0\}$.
Hence $m=5$.
On the other hand, if $m$ consists of $m$ and $m+1$,
then by Lemma~\ref{lem:case1-1},
none of $m$ and $m+1$ is equal to
$7+3d$ nor $8+3d$ for any $d \in \ZZ_+ \cup \{0\}$.
Thus $m$ is less than $5$.
Since $CS(s)$ involves a term $3+c$,
we have $m+1\ge 3$.
Hence we see $2\le m\le 5$.
\end{proof}

Next, we study the case where
$r=n/(2n+1)=[2,n]$ with $n \ge 3$.
Recall from Remark~\ref{rem:(1)holds}(2) that
$CS(r)=\lp 3, (n-1) \langle 2 \rangle, 3, (n-1) \langle 2 \rangle \rp$,
where $S_1=(3)$ and $S_2=((n-1) \langle 2 \rangle)$.
Recall also that $S(y_{i,b})=(1)$ or $(2)$ unless $y_i$ is an empty word,
and that $S(z_{i,e})=(1)$ or $(2)$ unless $z_i$ is an empty word,
for every $i$.
The following lemma is a counterpart of
Lemma~\ref{lem:case1-1(b)}.

\begin{lemma}
\label{lem:case2-1(b)}
Let $r=n/(2n+1)=[2,n]$, where $n \ge 3$ is an integer.
Under Hypothesis~B,
the following hold for every $i$.
\begin{enumerate}[\indent \rm (1)]
\item $S(z_{i, e}y_{i+1, b})\ne (3)$.

\item $S(w_iz_iy_{i+1, b})\ne (4)$ and $S(z_{i, e}y_{i+1}w_{i+1})\ne (4)$.

\item $S(w_iz_iy_{i+1, b})\ne (5)$ and $S(z_{i, e}y_{i+1}w_{i+1})\ne (5)$.

\item $S(w_iz_iy_{i+1}w_{i+1})\ne (6)$.

\item $S(w_iz_iy_{i+1}w_{i+1})\ne (3,3)$.
\end{enumerate}
\end{lemma}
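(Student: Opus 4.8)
The plan is to follow, \emph{mutatis mutandis}, the proof of Lemma~\ref{lem:case1-1(b)}, which is the exact analogue for $r=2/5$. Throughout one works under Hypothesis~B, where now $S_1=(3)$ and $S_2=((n-1)\langle 2\rangle)$, and one exploits the precise shape recorded in Remark~\ref{rem:(1)holds}(2): each $\phi(\partial D_i^{\pm})$ has $S$-sequence $(\ell_1, n_1\langle 2\rangle, 3, n_2\langle 2\rangle, \ell_2)$ with $0\le \ell_1,\ell_2\le 1$. Thus $w_i$ (and $w_i'$) is a block of length $3$ isolated by sign changes, while $y_i$ and $z_i$ are assembled from the $2$-blocks of $S_2$ together with a possible terminal ``split $2$'', that is a length-$1$ block with $S(y_{i,b})=(1)$ or $S(z_{i,e})=(1)$, sitting at the corner shared with the neighbouring face. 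For each of the five items I would first assume the displayed $S$-sequence identity and read off the forced block lengths (for instance, since $|w_i|=3$ and $w_i$ sign-changes with $z_i$, the identity $S(w_iz_iy_{i+1,b})=(4)$ in (2) forces $z_i$ empty and $|y_{i+1,b}|=1$, while $S(w_iz_iy_{i+1,b})=(5)$ in (3) forces $z_i$ empty and $|y_{i+1,b}|=2$), then draw the local picture of the layer $J$ dictated by $CS(\phi(\partial D_i))=CS(r)$ following Convention~\ref{con:figure}, and finally split into the two cases $J=M$ and $J\subsetneq M$.

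The two contradiction engines are the following. When $J=M$, the inner boundary label is $\phi(\delta^{-1})$ with $(u_{s'}^{\pm1})\equiv(\phi(\delta^{-1}))$, so by Lemma~\ref{properties} every term of $CS(s')$ lies in a window $\{m,m+1\}$ of width one; since the picture always exhibits a term coming from a $w_j'$ of size $\ge 3$, it suffices to produce on the inner boundary a term $\le 1$, or a term $\ge 5$, or both a $2$ and a $4$, to reach a contradiction. When $J\subsetneq M$, one instead invokes Lemma~\ref{lem:vertex_position}(1): no $S(\phi(e'))$ may contain $S_1=(3)$, which pins the relevant inner vertices into the \emph{interiors} of the weight-$3$ segments carried by the $w'$'s; reading $\phi(e_2'e_3')$ across such a configuration then forces the sequence $CS(\phi(\partial D_1'))=CS(r)$ to acquire a feature it cannot possess, namely a term $\ne 2,3$, a third occurrence of $3$, or an adjacent pair $(3,3)$, all excluded because $CS(r)=\lp 3,2^{\,n-1},3,2^{\,n-1}\rp$. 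Proposition~\ref{connection} supplies a third source of contradiction whenever a configuration would force $CS(s)$ to contain both $S_1$ and $S_2$ as subsequences.

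Carrying this out item by item: in (1) and (2) the offending configuration contains a \emph{split} $2$, so the complementary half on the opposite boundary appears as a block of length $1$, which is the forbidden term $1$ (in $CS(s')$ if $J=M$, in $CS(r)$ if $J\subsetneq M$). Item (3), the length-$5$ block $w_iy_{i+1,b}$, is treated the same way: the mirror splitting of the involved $2$-blocks again deposits a length-$1$ block on the opposite boundary. Items (4) and (5) concern two consecutive images $w_i,w_{i+1}$: in (4) they abut with no sign change, merging into a $6$, and in (5) with a single sign change, giving $(3,3)$; in both cases the mirror relations $S(z_iz_i'^{-1})=S({y_{i+1}'}^{-1}y_{i+1})=S_2$ of Hypothesis~B force $z_i'$ and $y_{i+1}'$ to be \emph{full} copies of $S_2=((n-1)\langle 2\rangle)$, and the sign behaviour at the shared corner makes their abutting $2$-blocks merge into a single block of length $4$ sitting next to further $2$'s, which is forbidden (a $2$ together with a $4$ in $CS(s')$ when $J=M$, and via Lemma~\ref{lem:vertex_position}(1) a term $4$ in $CS(r)$ when $J\subsetneq M$).

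The main obstacle, and the essential difference from the case $r=2/5$, is that for $n\ge 3$ both $2$ and $3$ are legitimate terms of the cyclic $S$-sequences involved, so a contradiction can no longer be extracted from the mere coexistence of a $2$ and a $3$. One must instead keep exact account of which boundary $2$-blocks of $S_2$ get \emph{split} (producing the forbidden $1$'s) and which get \emph{merged} (producing the forbidden $4$'s), and this is governed by the alternation of degree-$2$ and degree-$4$ vertices along $\partial M$ guaranteed by Theorem~\ref{structure}(2). The most delicate bookkeeping occurs in (4) and (5), where one has to transfer the sign information from the outer boundary, namely the presence or absence of a sign change between $w_i$ and $w_{i+1}$, across the shared degree-$4$ corner to the inner boundary, in order to conclude that the two flanking $2$-blocks of $z_i'y_{i+1}'$ genuinely merge.
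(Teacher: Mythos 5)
Your overall strategy is the paper's own: items (1), (2) and (5) are in fact proved there by running the proof of Lemma~\ref{lem:case1-1(b)} verbatim with $(2)$ replaced by $((n-1)\langle 2\rangle)$, and items (3) and (4) by a separate figure-based argument with the same $J=M$ versus $J\subsetneq M$ dichotomy, Lemma~\ref{properties}, and Lemma~\ref{lem:vertex_position}(1). However, your assignment of the two ``contradiction engines'' to the individual items contains a genuine sign-transfer error that invalidates your proofs of (3) and (5). For item (3), the forced configuration is $z_i$ empty and $|y_{i+1,b}|=2$ (equivalently the mirrored $|z_{i,e}|=2$, $y_{i+1}$ empty treated in the paper), so the $2$-block absorbed into the $5$-block is a \emph{full} block of $S_2$, not a split one: the break between $y_{i+1}'$ and $y_{i+1}$ (resp.\ $z_i$ and $z_i'$) falls on a block boundary, no length-$1$ block is deposited anywhere, and your ``mirror splitting deposits a $1$'' mechanism simply does not occur. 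The actual mechanism (Figure~\ref{fig.II-lemma-1-4}(a)) is a \emph{merge}: since the absorbed block carries the same sign as the adjacent $w$, the last $2$-block of $z_i'$ and the first $2$-block of $y_{i+1}'$ also share a sign and coalesce into a $4$, giving both a term $2$ and a term $4$ in $CS(s')$ when $J=M$ (contradicting Lemma~\ref{properties}), and a subsequence $(\ell_1,4,\ell_2)$ in $S(\phi(e_2'e_3'))$, hence a term $4$ in $CS(r)$, when $J\subsetneq M$.

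For item (5) your error is the reverse one. A sign change between $w_i$ and $w_{i+1}$ on the outer boundary transfers across the shared corner to a sign change between the last block of $z_i'$ and the first block of $y_{i+1}'$; so these blocks do \emph{not} merge, no $4$ appears, and instead $S(z_i'y_{i+1}')=(2(n-1))\langle 2\rangle$, an unbroken run of $2(n-1)$ twos. (Check this against the $r=2/5$ prototypes: the outer $(6)$ of Lemma~\ref{lem:case1-1(b)}(3) produces an inner $4$, while the outer $(3,3)$ of Lemma~\ref{lem:case1-1(b)}(6) produces an inner $(2,2)$ --- merging occurs exactly when there is \emph{no} outer sign change, contrary to the conclusion of your closing paragraph.) The correct contradiction for (5) is the one in Lemma~\ref{lem:case1-1(b)}(6) with $(2)$ replaced by $((n-1)\langle 2\rangle)$: if $J\subsetneq M$, the run of $2(n-1)$ twos would have to sit inside $CS(r)$, whose maximal runs of $2$'s have length $n-1<2(n-1)$; if $J=M$, the run contains $S_2$, and together with a term of the form $3+c$ coming from some $w_j'$ one contradicts Proposition~\ref{connection} when $c=0$ and Lemma~\ref{properties} when $c>0$. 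Your treatment of items (1), (2) and (4) is in order and agrees with the paper.
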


\begin{proof}
The proofs of (1), (2) and (5), respectively,
are parallel to those of (1), (2) and (6)
in Lemma~\ref{lem:case1-1(b)}.
We only have to replace the subsequence $(2)$ with
the sequence $((n-1) \langle 2 \rangle)$.

(3) Suppose on the contrary that $S(z_{1, e}y_2w_2)= (5)$.
Then $|z_{1, e}|=2$ and $|y_2|=0$, and $J$ is as depicted in
Figure~\ref{fig.II-lemma-1-4}(a).
If $J=M$,
then we see from Figure~\ref{fig.II-lemma-1-4}(a) that
$CS(\phi(\delta^{-1}))=CS(s')$ includes both a term $2$ and a term $4$,
contradicting \cite[Lemma~3.8]{lee_sakuma_2}.
If $J \subsetneq M$,
then by using Lemma~\ref{lem:vertex_position}(1)
as in the proof of Lemma~\ref{lem:case1-1(b)}(1),
we can see that $S(\phi(e_2'e_3'))$
contains $(\ell_1, 4, \ell_2)$ as a subsequence,
where $\ell_1, \ell_2 \in \ZZ_+$.
This implies that a term $4$ occurs in $CS(r)$, a contradiction.

(4) Suppose on the contrary that $S(w_1z_1y_2w_2)=(6)$.
Then $|z_1|=|y_2|=0$ and $J$ is as depicted in
Figure~\ref{fig.II-lemma-1-4}(b).
We obtain a contradiction as in the proof of
Lemma~\ref{lem:case2-1(b)}(3).
\end{proof}

\begin{figure}[h]
\includegraphics{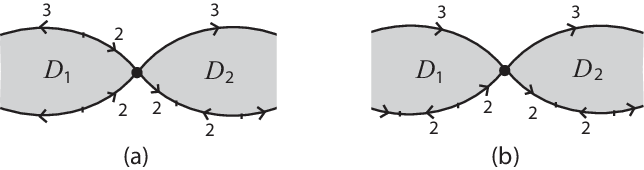}
\caption{
(a) Lemma~\ref{lem:case2-1(b)}(3)
where $S(z_1y_2w_2)=(2+0+3)$, and
(b) Lemma~\ref{lem:case2-1(b)}(4)
where $S(w_1z_1y_2w_2)=(3+0+0+3)$}
\label{fig.II-lemma-1-4}
\end{figure}

\begin{lemma}
\label{lem:case2-1}
Let $r=n/(2n+1)=[2,n]$, where $n \ge 3$ is an integer.
Under Hypothesis~B, the following hold.
\begin{enumerate}[\indent \rm (1)]
\item No two consecutive terms of $CS(s)$ can be $(3, 3)$.

\item No two consecutive terms of $CS(s)$ can be $(4, 4)$.

\item No term of $CS(s)$ can be of the form $5+d$, where $d \in \ZZ_+ \cup \{0\}$.
\end{enumerate}
\end{lemma}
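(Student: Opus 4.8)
The plan is to mirror the proof of Lemma~\ref{lem:case1-1} (the case $r=2/5$), but with the forbidden patterns of Lemma~\ref{lem:case1-1(b)} replaced by those of Lemma~\ref{lem:case2-1(b)}. The first step is to classify the terms of $CS(s)$, i.e.\ the single runs making up $\phi(\alpha)\equiv y_1w_1z_1\cdots y_tw_tz_t$. By Lemma~\ref{lem:case1-1(a)} each such run either contains some $w_i$, in which case it has the form $z_{0,e}w_1\cdots w_qy_{q+1,b}$, or it contains no $w_i$. In the latter case the run lies in some $z_iy_{i+1}$; since (by Remark~\ref{rem:(1)holds}(2)) $S_1=(3)$ and $S_2=((n-1)\langle 2\rangle)$, every run interior to $z_i$ or $y_{i+1}$ has length $\le 2$, and a run straddling the junction is exactly $z_{i,e}y_{i+1,b}$ of length $\le 4$. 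Thus a term of $CS(s)$ avoiding all $w_i$ has value $\le 4$, with value $4$ occurring only for the $2+2$ junction run $z_{i,e}y_{i+1,b}$, whose two neighbouring runs have value $\le 3$ (being interior $2$-runs or adjacent $w$'s).

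For (1), a term of value $3$ is either a single $w_i$ (the case $q=1$ with empty flanks) or a junction run with $S(z_{i,e}y_{i+1,b})=(3)$, the latter excluded by Lemma~\ref{lem:case2-1(b)}(1); hence a subsequence $(3,3)$ forces adjacent oppositely signed $w_i,w_{i+1}$ with $z_iy_{i+1}$ empty, so $S(w_iz_iy_{i+1}w_{i+1})=(3,3)$, contradicting Lemma~\ref{lem:case2-1(b)}(5). For (2), a value-$4$ term containing a $w_i$ must have $q=1$ with a single extra letter, i.e.\ $z_{i,e}w_{i+1}$ or $w_iy_{i+1,b}$; these give $S(z_{i,e}y_{i+1}w_{i+1})=(4)$ or $S(w_iz_iy_{i+1,b})=(4)$, both excluded by Lemma~\ref{lem:case2-1(b)}(2). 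The only other value-$4$ term is the isolated junction run, whose neighbours have value $\le 3$; hence no two consecutive terms of $CS(s)$ can be $(4,4)$.

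For (3), any term of value $\ge 5$ must contain some $w_i$, since $w$-free runs have value $\le 4$; write it as $z_{0,e}w_1\cdots w_qy_{q+1,b}$ with $q\ge 1$. If $q\ge 2$ it contains the length-$6$ subrun $w_1w_2$ (with empty $z_1y_2$), so $S(w_1z_1y_2w_2)=(6)$, contradicting Lemma~\ref{lem:case2-1(b)}(4). If $q=1$, its value $|z_{0,e}|+3+|y_{2,b}|\ge 5$ forces $|z_{0,e}|+|y_{2,b}|\ge 2$ with each summand $\le 2$; if one flank has length $2$ the corresponding subpiece $S(z_{0,e}y_1w_1)=(5)$ or $S(w_1z_1y_{2,b})=(5)$ is excluded by Lemma~\ref{lem:case2-1(b)}(3), while the remaining configuration $|z_{0,e}|=|y_{2,b}|=1$ yields $S(z_{0,e}y_1w_1)=(4)$, excluded by Lemma~\ref{lem:case2-1(b)}(2). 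Hence no term of $CS(s)$ has the form $5+d$.

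The case analysis itself is routine; the one genuine difference from the $r=2/5$ argument, and the point needing care, is that here $y_i$ and $z_i$ are no longer single runs of length $\le 2$ but may carry many interior $2$'s. Consequently the clean length bound ``a $w$-free subword has length $\le 4$'' (trivial for $2/5$) has to be extracted from the shape of $S_2=((n-1)\langle 2\rangle)$, and in the value-$\ge 6$ analysis one must first localize the correct length-$4$ or length-$5$ subpiece flanking a single $w_i$ before invoking Lemma~\ref{lem:case2-1(b)}. This localization is the only subtle bookkeeping step.
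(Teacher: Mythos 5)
Your proof is correct and follows essentially the same route as the paper: the paper disposes of (1) and (2) by declaring them parallel to Lemma~\ref{lem:case1-1}(1),(2) with Lemma~\ref{lem:case2-1(b)}(1),(2),(5) substituted, and proves (3) by exactly your three-case localization of a $4$-, $5$- or $6$-valued subpiece $z_{0,e}w_1$, $w_1y_{2,b}$ or $w_1w_2$ excluded by Lemma~\ref{lem:case2-1(b)}(2)--(4); your extra care with interior $2$-runs of $z_i$, $y_i$ is precisely the adjustment the paper's ``parallel'' claim silently requires, since its $r=2/5$ endgame for (2) (``$|v|=8$ but $|z_iy_{i+1}|\le 4$'') does not transfer when $n\ge 3$. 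One parenthetical slip --- the neighbours of a $(2,2)$-junction run need not have value $\le 3$, as a flanking run may be a longer $w$-chain --- is harmless, because your argument only uses that such a neighbour is not itself a $4$-term, which you have already established.
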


\begin{proof}
The proofs of (1) and (2)
are parallel to those of (1) and (2)
in Lemma~\ref{lem:case1-1},
where we have only to use
Lemma~\ref{lem:case2-1(b)}(1), (2) and (5)
instead of
Lemma~\ref{lem:case1-1(b)}(1), (2) and (6).

(3) Suppose on the contrary that $CS(s)$ has a term of the form $5+d$.
Let $v$ be a subword of the cyclic word $(u_s)$
corresponding to a term $5+d$.
By using Lemma~\ref{lem:case1-1(a)}
and Remark~\ref{rem:(1)holds}(2),
we see that one of the following holds
after a cyclic shift of indices.
\begin{enumerate}[\indent \rm (i)]
\item $v$ contains $z_{0, e}w_1$ with $S(z_{0, e}w_1)=(4)$ or $(5)$.

\item $v$ contains $w_1y_{2, b}$ with $S(w_1y_{2, b})=(4)$ or $(5)$.

\item $v$ contains $w_1w_2$ with $S(w_1w_2)=(6)$.
\end{enumerate}
However, (i) and (ii) are impossible by
Lemma~\ref{lem:case2-1(b)}(2) and (3),
and (iii) is impossible by Lemma~\ref{lem:case2-1(b)}(4).
\end{proof}

Finally, we study the case where
$r=(n+1)/(3n+2)=[2,1,n]$ with $n \ge 2$.
Recall from Remark~\ref{rem:(1)holds}(3)
that
$CS(r)=\lp n \langle 3 \rangle, 2, n \langle 3 \rangle, 2 \rp$,
where $S_1=(n \langle 3 \rangle)$ and $S_2=(2)$.
Recall also
$S(w_{i, b})=S(w_{i, e})=(3)$ for every $i$.
The following lemma is a counterpart of
Lemmas~\ref{lem:case1-1(b)} and \ref{lem:case2-1(b)}.

\begin{lemma}
\label{lem:case3-1(b)}
Let $r=(n+1)/(3n+2)=[2,1,n]$, where $n \ge 2$ is an integer.
Under Hypothesis~B, the following hold for every $i$.
\begin{enumerate}[\indent \rm (1)]
\item $S(z_{i}y_{i+1})\ne (3)$.

\item $S(w_{i,e}z_iy_{i+1})\ne (4)$ and
$S(z_iy_{i+1}w_{i+1,b})\ne (4)$.

\item $S(w_{i,e}z_iy_{i+1})\ne (5)$
and
$S(z_iy_{i+1}w_{i+1,b})\ne (5)$.

\item $S(w_{i,e}z_iy_{i+1}w_{i+1,b})\ne (3,3)$.
\end{enumerate}
\end{lemma}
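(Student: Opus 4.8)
The plan is to establish all four parts by the scheme already used for Lemmas~\ref{lem:case1-1(b)} and \ref{lem:case2-1(b)}: assume (after a cyclic shift, at $i=1$) that the displayed $S$-sequence attains the forbidden value, read off the forced local shape of the outer boundary layer $J$, and then derive a contradiction in each of the two cases $J=M$ and $J\subsetneq M$. The guiding observation is that, although $S_1=(n\langle 3\rangle)$ is now long, every forbidden pattern is concentrated at the junction between $D_1$ and $D_2$, flanked by the blocks $w_{1,e}$ and $w_{2,b}$, each of length $3$ by Remark~\ref{rem:(1)holds}(3). Thus $w_{1,e}$ and $w_{2,b}$ will play exactly the roles that the single length-$3$ words $w_1,w_2$ played when $r=2/5$, and the long interiors of $w_1,w_2$ never intervene.

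First I would pin down the lengths. Since $0\le|y_i|,|z_i|\le 2$ with $S(y_i),S(z_i)$ single terms and Hypothesis~B forces a sign change across $w_iz_i$ and across $y_{i+1}w_{i+1}$ (when the relevant piece is nonempty), the hypotheses become: in (1), $z_1y_2$ is a single block with $(|z_1|,|y_2|)\in\{(2,1),(1,2)\}$; in (2), $z_1=\emptyset$, $|y_2|=1$, with no sign change across $w_{1,e}y_2$; in (3), $z_1=\emptyset$, $|y_2|=2$, again with no sign change; and in (4), $z_1=y_2=\emptyset$ with a sign change across $w_{1,e}w_{2,b}$. On the inner boundary I would then invoke the identities $S(z_i{z_i'}^{-1})=S({y_i'}^{-1}y_i)=S_2=(2)$ from Hypothesis~B; since these products are cyclically reduced, they give $|z_i|+|z_i'|=|y_i|+|y_i'|=2$, which fixes $|z_1'|$ and $|y_2'|$ in every case.

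The core of the argument is the sign bookkeeping on $\delta^{-1}$. Using that $z_i{z_i'}^{-1}$ and ${y_i'}^{-1}y_i$ are precisely the two $S_2=(2)$ blocks of the face cycle, whose cyclic $S$-sequence is $CS(r)=\lp n\langle 3\rangle,2,n\langle 3\rangle,2\rp$, I would compute the signs of $z_1',y_2',w_{1,e}'$ and $w_{2,b}'$ and decide at each junction whether the short inner blocks fuse with the neighbouring $3$-block. When $J=M$ this produces, inside $CS(\phi(\delta^{-1}))=CS(s')$, an isolated term $1$ in parts (1) and (2) and a fused term $5$ in part (3), each adjacent to a term $3$, which contradicts Lemma~\ref{properties}; in part (4) it produces a subsequence $(2,2)$ next to the run $n\langle 3\rangle$, so $CS(s')$ contains both $S_1$ and $S_2$ and Proposition~\ref{connection} forces $s'\notin I_1(r)\cup I_2(r)$, against Hypothesis~A. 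When $J\subsetneq M$ I would apply Lemma~\ref{lem:vertex_position}(1): since no $S(\phi(e_j'))$ can contain $S_1=(n\langle 3\rangle)$, the vertices between $e_1',e_2'$ and between $e_3',e_4'$ lie inside $w_1'$ and $w_2'$, so the short blocks $z_1'$ and $y_2'$ survive inside $S(\phi(e_2'e_3'))$; this forces a term $1$ (parts (1),(2)), a term $5$ (part (3)), or a subsequence $(2,2)$ (part (4)) to occur in $CS(\phi(\partial D_1'))=CS(r)$, all impossible because $CS(r)$ consists of runs of $3$'s separated by isolated $2$'s.

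The step I expect to be the real obstacle is exactly this sign bookkeeping that governs the fusions: whether $z_1'$ merges with the adjacent $w'$-block to give a $5$ (as in part (3)) or remains a separate short block (as in parts (1),(2),(4)). Because $w_i'$ is now a long run of $3$'s instead of a single $3$, I cannot transcribe the figures for $r=2/5$; each relevant sign has to be recovered from the two $S_2$-relations together with the face cycle $CS(r)$, and I must then verify that the offending short (or fused) term genuinely abuts a $3$-run, so that Lemma~\ref{properties} or Proposition~\ref{connection} applies for $J=M$, and genuinely survives in $\phi(e_2'e_3')$, so that the contradiction with $CS(r)$ applies for $J\subsetneq M$. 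Once the signs are settled, what remains are the same routine length counts as in Lemmas~\ref{lem:case1-1(b)} and \ref{lem:case2-1(b)}.
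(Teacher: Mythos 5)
Your treatment of parts (1), (2) and (4) matches the paper: those cases are indeed proved by transcribing Lemma~\ref{lem:case1-1(b)}(1), (2) and (6) with the single component $3$ replaced by the run $(n\langle 3\rangle)$, and the sign bookkeeping you worry about does settle exactly as you predict (the short inner blocks $z_i'$, $y_{i+1}'$ are flanked by sign changes, so they survive as interior terms of $S(\phi(e_2'e_3'))$, forcing a $1$ or a $(2,2)$ into $CS(r)$ when $J\subsetneq M$, and the $J=M$ cases go through as in the $2/5$ model, with the $c=0$ subcase of (4) handled by Proposition~\ref{connection} rather than Lemma~\ref{properties}).

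Part (3), however, is where your plan breaks, and it breaks at precisely the point you flagged but did not resolve. Under $S(w_{1,e}z_1y_2)=(5)$ one gets $z_1=\emptyset$, $|y_2|=2$, hence $|z_1'|=2$, $y_2'=\emptyset$, and the sign computation shows that $z_1'$ has the \emph{same} sign as $w_{2,b}'$: the offending inner pattern is a fusion $z_1'w_{2,b}'$ of length $5$, not an isolated short block. For $J=M$ this is fine ($CS(s')$ contains a $5$ and, since $n\ge 2$, a $3$, contradicting Lemma~\ref{properties}). But for $J\subsetneq M$, Lemma~\ref{lem:vertex_position}(1) only forces the vertex between $e_3'$ and $e_4'$ to lie strictly inside the run $w_2'$ — it may lie inside the \emph{first} block $w_{2,b}'$, one letter past the junction. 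In that event $\phi(e_2'e_3')$ ends with a terminal syllable $z_1'c$ of length $2+1=3$, which is perfectly compatible with $CS(\phi(\partial D_1'))=CS(r)=\lp n\langle 3\rangle, 2, n\langle 3\rangle, 2\rp$, and no term $5$ (indeed nothing contradictory) is forced. This is exactly the difference from parts (1), (2), (4): there the short block has sign opposite to the adjacent runs, so it is automatically interior; here the fused block shares the sign of $w_{2,b}'$ and can be truncated by the vertex. The paper sidesteps all of this: its proof of (3) never looks at the inner boundary or at $M$ at all. From $z_1=\emptyset$ and $|y_2|=2$ one reads off $S(y_2w_2)=(2,n\langle 3\rangle)$ on the \emph{outer} boundary, so $CS(s)$ contains a genuine term $3$ (this uses $n\ge 2$, which makes $w_{2,b}$ an interior block of the run) alongside the assumed term $5+d$, contradicting Lemma~\ref{properties} directly. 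You should replace your diagrammatic argument for (3) by this one-line outer-boundary argument, or else supply the missing analysis of the truncated-vertex configuration (e.g.\ by pushing into the face $D_2'$), which your sketch does not do.
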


\begin{proof}
The proofs of (1), (2) and (4) are
parallel to those of Lemma~\ref{lem:case1-1(b)}(1), (2) and (6),
respectively.
We only have to replace the term $3$ with
the sequence $(n \langle 3 \rangle)$.

(3) Suppose on the contrary
$S(w_{1,e}z_1y_2)= (5)$.
(The other case is treated similarly.)
Then $|z_1|=0$ and $|y_2|=2$.
Thus $S(y_2w_2)=(2, n \langle 3 \rangle)$, and so
$CS(s)$ has a term $3$. (Here we use the assumption $n\ge 2$.)
This contradicts \cite[Lemma~3.8]{lee_sakuma_2}
because $CS(s)$ contains a term $5+d$ with $d\ge 0$
by the assumption.
\end{proof}

\begin{lemma}
\label{lem:case3-1}
Let $r=(n+1)/(3n+2)=[2,1,n]$, where $n \ge 2$ is an integer.
Under Hypothesis~B, the following hold.

\begin{enumerate}[\indent \rm (1)]
\item No two consecutive terms of $CS(s)$ can be $(4, 4)$.

\item No term of $CS(s)$ can be $5$.

\item No term of $CS(s)$ can be of the form $7+d$, where $d \in \ZZ_+ \cup \{0\}$.
\end{enumerate}
\end{lemma}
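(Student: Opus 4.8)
The plan is to classify the components of $CS(s)=CS(\phi(\alpha))$ by analyzing how a maximal monochromatic run of
$\phi(\alpha)\equiv y_1w_1z_1y_2w_2z_2\cdots y_tw_tz_t$
sits relative to the blocks $w_i$ and the connecting subwords $c_i:=z_iy_{i+1}$, and then to eliminate the forbidden run-lengths using Lemma ~\ref{lem:case3-1(b)}. I would first record the structural data supplied by Hypothesis ~B and Remark ~\ref{rem:(1)holds}(3): each $w_i$ has $S(w_i)=(n\langle 3\rangle)$ with $n\ge 2$, hence is a concatenation of $n$ runs of length $3$ separated by sign changes, with $S(w_{i,b})=S(w_{i,e})=(3)$; each of $y_i,z_i$ has length at most $2$; and, whenever $y_i$ (resp. $z_i$) is nonempty, there is a sign change between $y_i$ and $w_i$ (resp. between $w_i$ and $z_i$). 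The first consequence I would draw is a localization statement: every interior run of a $w_i$ is an isolated run of length $3$, and since $w_{i+1}$ contains at least two runs separated by a sign change (as $n\ge 2$), no monochromatic run can contain all of $w_{i+1}$, so no run can meet two different connecting regions. Thus every component of $CS(s)$ of length greater than $3$ is localized at a single $c_i$, arising as a merge of $c_i$ with at most the flanking runs $w_{i,e}$ and $w_{i+1,b}$.

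Granting this, part (3) follows from a length count. A run meeting $c_i$ can absorb $w_{i,e}$ only if $z_i$ is empty (otherwise the sign change between $w_i$ and $z_i$ severs it) and can absorb $w_{i+1,b}$ only if $y_{i+1}$ is empty; absorbing both forces $c_i$ empty, whence the run is $w_{i,e}w_{i+1,b}$, which has length $6$ by Lemma ~\ref{lem:case3-1(b)}(4). In all remaining cases the run lies in $w_{i,e}c_i$, in $c_iw_{i+1,b}$, or in $c_i$, each of length at most $5$. Hence no run has length $\ge 7$, proving (3).

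For (2), a run of length $5$ cannot lie inside a single $w_i$ and cannot be a merged $c_i$ (as $|z_i|+|y_{i+1}|\le 4$); by the localization it must be $w_{i,e}y_{i+1}$ with $z_i$ empty and $|y_{i+1}|=2$, or the mirror subword $z_iw_{i+1,b}$ with $y_{i+1}$ empty. These yield $S(w_{i,e}z_iy_{i+1})=(5)$ or $S(z_iy_{i+1}w_{i+1,b})=(5)$, both excluded by Lemma ~\ref{lem:case3-1(b)}(3); this is (2). For (1), the same case analysis shows that a run of length $4$ is either $w_{i,e}y_{i+1}$ ($z_i$ empty, $|y_{i+1}|=1$) or $z_iw_{i+1,b}$ ($y_{i+1}$ empty, $|z_i|=1$), giving $S$-value $(4)$ on $w_{i,e}z_iy_{i+1}$ or $z_iy_{i+1}w_{i+1,b}$ and hence ruled out by Lemma ~\ref{lem:case3-1(b)}(2), or else a merged $z_iy_{i+1}$ with $|z_i|=|y_{i+1}|=2$. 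Thus every length-$4$ run is of this last type; since $z_i$ and $y_{i+1}$ are then nonempty, such a run is flanked by the length-$3$ runs $w_{i,e}$ and $w_{i+1,b}$, so two length-$4$ runs can never be adjacent, giving (1).

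The step I expect to require the most care is the localization in the first paragraph: one must verify that the sign changes forced by nonempty $y_i,z_i$, together with the internal sign changes of the $w_i$, genuinely confine every long run to a single connecting region. The boundary value $n=2$ deserves a separate glance, since then $w_i$ has no interior runs at all; but $n\ge 2$ still provides the sign change immediately after $w_{i+1,b}$, so the length-$6$ run $w_{i,e}w_{i+1,b}$ stays maximal and the classification is unchanged. Once localization is established, (1)--(3) reduce to the already-proved length restrictions in Lemma ~\ref{lem:case3-1(b)}.
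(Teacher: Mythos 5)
Your proof is correct and follows essentially the same route as the paper: the paper likewise uses the run structure forced by Hypothesis~B and Remark~\ref{rem:(1)holds}(3) (runs interior to each $w_i$ have length $3$, so long runs are confined to a single connecting region $z_iy_{i+1}$ merged with at most $w_{i,e}$ and $w_{i+1,b}$), rules out the length-$4$ and length-$5$ configurations by Lemma~\ref{lem:case3-1(b)}(2) and (3), and proves (3) by the same count showing every term of $CS(s)$ is at most $6$. Your explicit localization lemma and the flanking-by-$3$'s conclusion of (1) are only cosmetic variants of the paper's argument (which instead notes that two adjacent $w$-free runs would lie in a single $z_iy_{i+1}$ of length at most $4$), so there is nothing to correct.
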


\begin{proof}
(1) The proof is parallel to that of
Lemma~\ref{lem:case1-1}(2),
where we have only to use Lemma~\ref{lem:case3-1(b)}(2)
instead of Lemma~\ref{lem:case1-1(b)}(2).

(2) Suppose on the contrary that $5$ occurs in $CS(s)$.
Let $v$ be a subword of the cyclic word $(u_s)$
corresponding to a term $5$.
Then, by using Lemma~\ref{lem:case1-1(a)}
and Remark~\ref{rem:(1)holds}(3),
we see that one of the following holds
after a shift of indices.
\begin{enumerate}[\indent \rm (i)]
\item $v=z_0w_{1,b}$ with $|z_0|=2$.

\item $v=w_{1, e}y_2$ with $|y_2|=2$.
\end{enumerate}
However, this contradicts Lemma~\ref{lem:case3-1(b)}(3).

(3) Note that every term of $CS(s)$
is at most $6$, and that this happens only when
$S(w_iw_{i+1})
=((n-1) \langle 3 \rangle, 6, (n-1) \langle 3 \rangle)$,
where $|z_i|=|y_{i+1}|=0$, for some $i$.
Hence we obtain the desired result.
\end{proof}

\subsection{Hypothesis~C} Assuming the following Hypothesis~C,
we will establish three technical lemmas (Lemmas~\ref{lem:case1-2}--\ref{lem:case3-2})
concerning the sequence $S(z_iy_{i+1})$
accordingly as $r=[2, 2]$, $r=[2, n]$ with $n \ge 3$,
and $r=[2, 1, n]$ with $n \ge 2$.

\begin{hypothesisC}
\label{hyp:(2)holds}
{\rm
Suppose under Hypothesis~A that
Lemma~\ref{lem:boundary_layer}(2) holds,
namely, for every face $D_i$ of $J$,
suppose that $S(\phi(\partial D_i^+))$ contains
a subsequence of the form $(\ell, S_2, \ell')$,
where $\ell, \ell' \in \ZZ_+$.
Then we can decompose the word $\phi(\alpha)$
(clearly $(u_s) \equiv (\phi(\alpha))$) into
\[
\phi(\alpha) \equiv y_1 w_1 z_1 y_2 w_2 z_2 \cdots y_t w_t z_t,
\]
where $\phi(\partial D_i^+) \equiv \phi(e_{2i-1}e_{2i}) \equiv y_iw_iz_i$,
$y_i$ and $z_i$ are nonempty words, $S(w_i)=S_2$,
and where $S(y_iw_iz_i)=(S(y_i), S_2, S(z_i))$, for every $i$.
By Lemma~\ref{lem:two_cases}, we also have the decomposition of the word $\phi(\delta^{-1})$
as follows (clearly $(u_{s'}^{\pm 1}) \equiv (\phi(\delta^{-1}))$ if $J=M$):
\[
\phi(\delta^{-1}) \equiv y_1' w_1' z_1' y_2' w_2' z_2' \cdots y_t' w_t' z_t',
\]
where $\phi(\partial D_i^-) \equiv \phi(e_{2i-1}'e_{2i}') \equiv y_i'w_i'z_i'$,
$y_i'$ and $z_i'$ are nonempty words, $S(w_i')=S_2$,
and where $S(y_i'w_i'z_i')=(S(y_i'), S_2, S(z_i'))$, for every $i$.
Then $S(y_i'^{-1} y_i)=S(z_i z_i'^{-1})=S_1$ for every $i$.
}
\end{hypothesisC}

\begin{remark}
\label{rem:(2)holds}
{\rm
(1) If $r=[2, 2]=2/5$, then $CS(r)=\lp 3, 2, 3, 2 \rp$,
where $S_1=(3)$ and $S_2=(2)$.
So, in Hypothesis~C,
both $S(\phi(\partial D_i^+))$ and $S(\phi(\partial D_i^-))$
are exactly of the form $(\ell, 2, \ell')$, where $1 \le \ell, \ell' \le 2$
are integers.

(2) If $r=[2, n]$ with $n \ge 3$, then
$CS(r)=\lp 3, (n-1) \langle 2 \rangle, 3, (n-1) \langle 2 \rangle \rp$,
where $S_1=(3)$ and $S_2=((n-1) \langle 2 \rangle)$.
So, in Hypothesis~C,
both $S(\phi(\partial D_i^+))$ and $S(\phi(\partial D_i^-))$
are exactly of the form $(\ell, (n-1) \langle 2 \rangle, \ell')$,
where $1 \le \ell, \ell' \le 2$ are integers.

(3) If $r=[2, 1, n]$ with $n \ge 2$, then
$CS(r)=\lp n \langle 3 \rangle, 2, n \langle 3 \rangle, 2 \rp$,
where $S_1=(n \langle 3 \rangle)$ and $S_2=(2)$.
So, in Hypothesis~C,
both $S(\phi(\partial D_i^+))$ and $S(\phi(\partial D_i^-))$
$(\ell_1, n_1 \langle 3 \rangle, 2, n_2 \langle 3 \rangle, \ell_2)$,
where $0 \le \ell_1, \ell_2 \le 2$ and $0 \le n_1, n_2 \le n-1$ are integers
such that a pair $(\ell_j, n_j)$ cannot be $(0, 0)$ for $j=1,2$.
}
\end{remark}

\begin{lemma}
\label{lem:case1-2}
Let $r=2/5=[2,2]$.
Under Hypothesis~C, the following hold for every $i$.
\begin{enumerate}[\indent \rm (1)]
\item $S(z_iy_{i+1})=(2, 2)$ is not possible.

\item $S(z_iy_{i+1})=(2)$ with $|z_i|=|y_{i+1}|=1$ is not possible.
\end{enumerate}
\end{lemma}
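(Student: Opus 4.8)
The plan is to suppose that one of the two forbidden configurations occurs (after a cyclic shift, say at $i=1$) and to derive a contradiction from the three tools available under Hypothesis~C: the face identities $CS(\phi(\partial D_i))=CS(r)=\lp 3,2,3,2\rp$, the piece restrictions of Lemma~\ref{lem:vertex_position}, and Lemma~\ref{properties}, which constrains the global shape of $CS(s)$ and $CS(s')$. First I would record the bookkeeping Hypothesis~C imposes: from $S(y_iw_iz_i)=(S(y_i),S_2,S(z_i))$ with $S_2=(2)$ and Remark~\ref{rem:(2)holds}(1) one has $|w_i|=|w_i'|=2$ and $|y_i|,|z_i|\in\{1,2\}$, while from $S(y_i'^{-1}y_i)=S(z_iz_i'^{-1})=S_1=(3)$ one gets the length identities $|y_i|+|y_i'|=3$ and $|z_i|+|z_i'|=3$, together with the sign correspondence (the common sign of $y_i',z_i'$ is opposite to that of $y_i,z_i$). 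The essential consequence is a \emph{duality}: the blocks $z_i,y_{i+1}$ merge on the outer boundary if and only if $z_i',y_{i+1}'$ merge on the inner boundary.

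Part (2) then has a clean proof. If $S(z_1y_2)=(2)$ with $|z_1|=|y_2|=1$, then $z_1,y_2$ merge, so by duality $z_1',y_2'$ merge as well; since $|z_1'|=|y_2'|=2$ they form a single monochromatic run of length $4$ on $\phi(\delta^{-1})$, flanked on both sides by $w_1',w_2'$ of opposite sign. If $J=M$ this places a term $4$ and a term $2$ in $CS(s')$, impossible by Lemma~\ref{properties}. If $J\subsetneq M$ the run is the top of a face $D_1'$ of $M-J$; Lemma~\ref{lem:vertex_position}(1) forces the two incident edges to be pieces, so the run can only be split $2+2$ at the degree-$4$ vertex and persists as a single block of length $\ge 4$ of $\partial D_1'$, contradicting $CS(\phi(\partial D_1'))=CS(r)=\lp 3,2,3,2\rp$.

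Part (1) is where I expect the real work. Here $S(z_1y_2)=(2,2)$ means $z_1,y_2$ (hence, by duality, $z_1',y_2'$) do \emph{not} merge, so no oversized run is created and a one-step argument is unavailable. Since $s\in I_1(r)\cup I_2(r)$, Proposition~\ref{connection} forbids $CS(s)$ from containing both $S_1=(3)$ and $S_2=(2)$; as $w_1$ already yields a $2$, either $CS(s)$ contains a $3$ -- and then Proposition~\ref{connection} finishes immediately -- or $CS(s)$ consists only of $1$'s and $2$'s (equivalently $n_1=1$). In the latter case the outer word carries $S(w_1z_1y_2w_2)=(2,2,2,2)$, which forces the $1$'s of $CS(s)$ to be isolated, while the inner word carries $S(z_1'y_2')=(1,1)$. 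When $J=M$ I would run a propagation argument: any $y_i$ or $z_i$ of length $1$ would give, on $\phi(\delta^{-1})=u_{s'}^{\pm1}$, either a run of length $\ge 3$ or a pattern $(2,2)$, which together with $(1,1)$ violates Lemma~\ref{properties}; hence all $y_i,z_i$ have length $2$, every junction is non-merging, and $CS(s)=\lp 2,2,\dots,2\rp$ of length $3t\ge 3$, which is the $S$-sequence of no slope. When $J\subsetneq M$ the inner boundary is only the outer boundary of the next layer, so Lemma~\ref{properties} does not apply directly; here I would argue as in the multi-layer descent of Lemma~\ref{lem:case1-1(b)}(4), using Lemma~\ref{lem:vertex_position} to locate the next-layer vertices inside the short runs and pushing the incompatibility one layer inward. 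The delicate point -- the step I expect to demand the most care -- is precisely this $n_1=1$, $J\subsetneq M$ situation, since the two length-$1$ runs can be absorbed at the ends of a next-layer face, so the contradiction must be chased through successive layers until the innermost one is reached.
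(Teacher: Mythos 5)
Your part (2) and the $J=M$ half of part (1) are essentially correct. Part (2) follows the paper's own route (your ``duality'' is exactly the bookkeeping $|z_i|+|z_i'|=3$, $|y_i|+|y_i'|=3$ with the sign flip), except for a miscitation: what pins the splitting vertices into the central $w'$-segments is Lemma~\ref{lem:vertex_position}(2) --- no edge label may contain $S_2=(2)$ in its interior --- not Lemma~\ref{lem:vertex_position}(1), and ``the incident edges are pieces'' alone does not force the $2+2$ splitting you assert (a vertex inside $z_1'$ would leave every edge a piece while splitting the $4$-run between two next-layer faces, destroying the contradiction). For part (1) with $J=M$, your dichotomy (a term $3$ in $CS(s)$ dies by Proposition~\ref{connection}, since the blocks $w_i$ already supply the term $2=S_2$; otherwise $CS(s)$ consists of $1$'s and $2$'s) followed by the inner-boundary propagation to $CS(s)=\lp 3t\langle 2\rangle\rp$ is a legitimate variant of the paper's sign-chasing propagation, ending in the same contradiction to Lemma~\ref{properties}.

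The genuine gap is part (1) with $J\subsetneq M$, precisely the step you flag and then leave as an unexecuted ``multi-layer chase.'' No descent is needed: the contradiction is local to the second layer. Since $S(w_1'z_1'y_2'w_2')=(2,1,1,2)$ while $CS(\phi(\partial D_1'))=CS(2/5)=\lp 3,2,3,2\rp$ has no term $1$, the two length-$1$ blocks can survive only at the two ends of $\phi(e_2'e_3')$, forcing $|\phi(e_2')|=|\phi(e_3')|=1$ --- exactly your ``absorbed at the ends'' configuration. But at that point Theorem~\ref{structure} gives $d_M(D_1')=4$, so the rest of $\partial D_1'$ consists of exactly two edges $e_2'',e_3''$, each labeled by a piece, and (up to symmetry) $S(\phi(e_2''e_3''))=(1,3,2,2)=(1,S_1,S_2,2)$; by Corollary~\ref{cor:max_piece_2} such a word cannot be a product of two pieces, a contradiction independent of how many layers remain. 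Your proposed alternative presupposes that the absorbed configuration reproduces itself on successive inner boundaries and that something decisive happens at the innermost layer, neither of which you establish --- and without the two ingredients you never invoke (the degree-$4$ face count from Theorem~\ref{structure} and the two-piece obstruction of Corollary~\ref{cor:max_piece_2}) there is no evident way to make the chase terminate. As written, the crucial subcase of (1) is missing.
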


\begin{proof}
(1) Suppose on the contrary that $S(z_1y_2)=(2, 2)$.
Since $1 \le |z_1|, |y_2| \le 2$, we have $|z_1|=|y_2|=2$.
Suppose first that $J=M$.
Then it follows from Figure~\ref{fig.lemma-2-1}(a)
that $CS(\phi(\delta^{-1}))=CS(s')$ involves two consecutive $1$'s.
It then follows that $|z_2|=2$,
for otherwise, $|z_2|=1$ and hence
we see $S(y_2'w_2'z_2')=(1,2,2)$,
which in turn implies that
$CS(s')$ contains $(2, 2+c)$ with $c\in \ZZ_+\cup\{0\}$
as a subsequence,
contradicting \cite[Lemma~3.8]{lee_sakuma_2},
because $CS(s')$ also contains $(1,1)$ as a subsequence.
We next observe that $z_2$ and $y_3$ have different signs,
as depicted in Figure~\ref{fig.lemma-2-1}(b).
If otherwise, we have $S(y_1'w_1'z_2'y_3')=(1,2,1+(3-d))=(1,2,4-d)$
where $d=|y_3|\in\{1,2\}$,
and hence $CS(s')$ contains $(2,2+c)$
with $c\in \ZZ_+\cup\{0\}$ as a subsequence,
again contradicting \cite[Lemma~3.8]{lee_sakuma_2}.
Hence $S(z_2y_3)=(2,d)$ with $d=|y_3|\in\{1,2\}$
(see Figure~\ref{fig.lemma-2-1}(c)).
If $d=1$, then we have $S(z_2'y_3'w_3')=(1,3-d,2)=(1,2,2)$,
which again yields a contradiction to \cite[Lemma~3.8]{lee_sakuma_2}.
Hence we see $S(z_2y_3)=(2,2)$.
By repeating this argument,
we see $S(z_iy_{i+1})=(2, 2)$
with $|z_i|=|y_{i+1}|=2$ for every $i$.
But then $CS(\phi(\alpha))=CS(s)$ becomes
$\lp \ell \langle 2 \rangle \rp$ with $\ell \ge 3$,
yielding a contradiction to \cite[Lemma~3.8]{lee_sakuma_2}.
Suppose next that $J \subsetneq M$
(see Figure~\ref{fig.lemma-2-1(2)}).
Note that the assumption $S(z_1y_2)=(2, 2)$ implies that
$S(w_1'z_1'y_2'w_2')=(2,1,1,2)$.
By using this fact, we can see that $|\phi(e_2')|=|\phi(e_3')|=1$,
for otherwise a subsequence of the form $(\ell_1, 1, \ell_2)$
with $\ell_1, \ell_2 \in \ZZ_+$
would occur in $S(\phi(e_2'e_3'))$, which in turn implies that
$CS(\phi(\partial D_1'))=CS(2/5)$ would contain a term $1$,
a contradiction to $CS(2/5)=\lp 3, 2, 3, 2 \rp$.
Assuming that $e_2', e_3', {e_3''}^{-1}, {e_2''}^{-1}$ is
a boundary cycle of $D_1'$,
we have $S(\phi(e_2''e_3''))=(1, 3, 2, 2)$
as depicted in Figure~\ref{fig.lemma-2-1(2)}.
But this is impossible, because
\cite[Corollary~3.25(2)]{lee_sakuma_2} shows that
any subword $w$ of the cyclic word $(\phi(\partial D_1')^{-1})=(u_{2/5}^{\pm 1})$
with $S(w)=(1, 3, 2, 2)=(1,S_1,S_2,2)$
cannot be a product of two pieces.

\begin{figure}[h]
\includegraphics{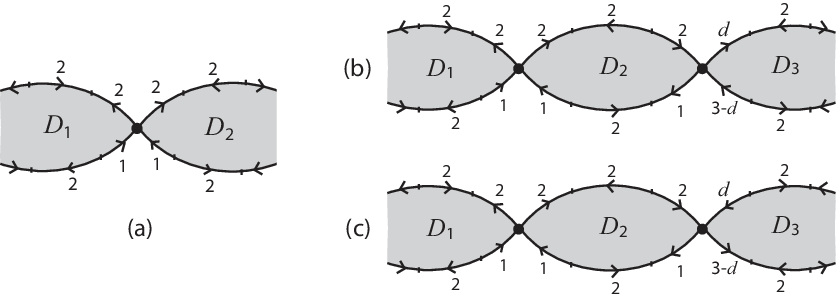}
\caption{
Lemma~\ref{lem:case1-2}(1) where $S(z_1y_2)=(2, 2)$ and $J=M$}
\label{fig.lemma-2-1}
\end{figure}

\begin{figure}[h]
\includegraphics{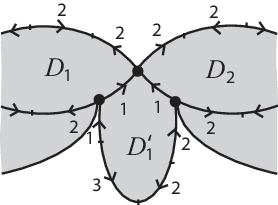}
\caption{
Lemma~\ref{lem:case1-2}(1) where $S(z_1y_2)=(2, 2)$ and $J \subsetneq M$}
\label{fig.lemma-2-1(2)}
\end{figure}

(2) Suppose on the contrary that $S(z_1y_2)=(2)$ with $|z_1|=|y_2|=1$.
Here, if $J=M$ (see Figure~\ref{fig.lemma-2-2}(a)),
then $CS(\phi(\delta^{-1}))=CS(s')$ contains both a term $2$ and
a term $4$, contradicting \cite[Lemma~3.8]{lee_sakuma_2}.
On the other hand, if $J \subsetneq M$,
then, by Lemma~\ref{lem:vertex_position}(2),
none of $S(\phi(e_j'))$ contains $S_2$ in its interior.
This implies that the initial vertex of $e_2'$ lies in
the (central) segment of $\partial D_1^-$ corresponding to $S_2=(2)$
and that the terminal vertex of $e_3'$ lies in
the (central) segment of $\partial D_2^-$ corresponding to $S_2=(2)$.
Then we see that $CS(\phi(\partial D_1'))=CS(2/5)$
contains a term of the form
$4+c$ with $c \in \ZZ_+ \cup \{0\}$,
as illustrated in Figure~\ref{fig.lemma-2-2}(b), a contradiction.
\end{proof}

\begin{figure}[h]
\includegraphics{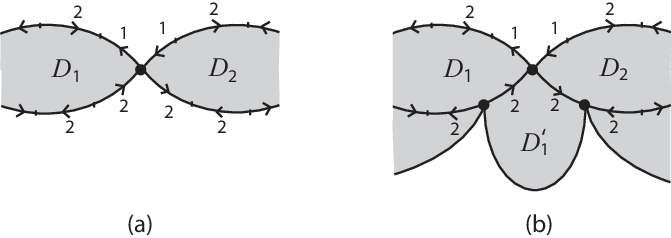}
\caption{
Lemma~\ref{lem:case1-2}(2) where $S(z_1y_2)=(1+1)$}
\label{fig.lemma-2-2}
\end{figure}

\begin{lemma}
\label{lem:case2-2}
Let $r=n/(2n+1)=[2,n]$, where $n \ge 3$ is an integer.
Under Hypothesis~C, the following hold for every $i$.
\begin{enumerate}[\indent \rm (1)]
\item $S(z_iy_{i+1})=(2, 2)$ is not possible.

\item $S(z_iy_{i+1})=(2)$ with $|z_i|=|y_{i+1}|=1$ is not possible.
\end{enumerate}
\end{lemma}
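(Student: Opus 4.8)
The plan is to mirror the proof of Lemma~\ref{lem:case1-2}, replacing the singleton $S_2=(2)$ used there by $S_2=((n-1)\langle 2\rangle)$, and to use the hypothesis $n\ge 3$ (so that $n-1\ge 2$) to streamline the case $J=M$. Throughout, I record the bookkeeping forced by Hypothesis~C: since $S(z_iz_i'^{-1})=S(y_i'^{-1}y_i)=S_1=(3)$, the words $z_i,z_i'$ have opposite signs with $|z_i|+|z_i'|=3$, and likewise $y_i,y_i'$; moreover $1\le|z_i|,|y_i|\le 2$ by Remark~\ref{rem:(2)holds}(2). As in the earlier lemmas it suffices to treat $i=1$.

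For (1), assume $S(z_1y_2)=(2,2)$. Then $z_1,y_2$ have opposite signs and $|z_1|=|y_2|=2$, so $|z_1'|=|y_2'|=1$; chasing signs shows $z_1',y_2'$ also have opposite signs, so the pair $z_1'y_2'$ contributes a subsequence $(1,1)$ to $S(\phi(\delta^{-1}))$. If $J=M$, this is where $n\ge 3$ simplifies matters: each $w_i'$ has $S(w_i')=((n-1)\langle 2\rangle)$ with $n-1\ge 2$, hence already contributes a subsequence $(2,2)$, so $CS(s')=CS(\phi(\delta^{-1}))$ contains both $(1,1)$ and $(2,2)$. A cyclic $S$-sequence whose terms lie in $\{1,2\}$ cannot contain both (the presence of $(1,1)$ excludes case~(a) and the presence of $(2,2)$ excludes case~(b) of Lemma~\ref{properties}(2)), a contradiction. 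If $J\subsetneq M$, I would argue exactly as in the corresponding part of Lemma~\ref{lem:case1-2}(1): here $S(w_1'z_1'y_2'w_2')=((n-1)\langle 2\rangle,1,1,(n-1)\langle 2\rangle)$, and Lemma~\ref{lem:vertex_position}(1) forces $|\phi(e_2')|=|\phi(e_3')|=1$ (otherwise a subsequence $(\ell_1,1,\ell_2)$ would appear in $CS(\phi(\partial D_1'))=CS(r)$, impossible since $CS(r)$ has no term $1$). Reading the boundary cycle of $D_1'$ then gives $S(\phi(e_2''e_3''))=(1,3,(n-1)\langle 2\rangle,2)=(1,S_1,S_2,2)$, which contains $(S_1,S_2)$ as a proper initial subsequence; by Corollary~\ref{cor:max_piece_2}(2) this word cannot be a product of two pieces, the required contradiction.

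For (2), assume $S(z_1y_2)=(2)$ with $|z_1|=|y_2|=1$. Then $z_1,y_2$ share a sign, so $|z_1'|=|y_2'|=2$, and the sign bookkeeping forces $z_1',y_2'$ to share a sign as well; thus $z_1'y_2'$ is a single block of length $4$, flanked by sign changes against $w_1'$ and $w_2'$. If $J=M$, then $CS(s')=CS(\phi(\delta^{-1}))$ contains this term $4$ together with the terms $2$ coming from the $w_i'$; since $4$ and $2$ differ by more than $1$, this is incompatible with Lemma~\ref{properties}. If $J\subsetneq M$, I would apply Lemma~\ref{lem:vertex_position}(2) as in Lemma~\ref{lem:case1-2}(2): no $S(\phi(e_j'))$ contains $S_2$ in its interior, which positions the relevant vertices so that the length-$4$ block $z_1'y_2'$ survives inside $\phi(\partial D_1')$, forcing a term of the form $4+c$ $(c\ge 0)$ into $CS(\phi(\partial D_1'))=CS(r)=\lp 3,(n-1)\langle 2\rangle,3,(n-1)\langle 2\rangle\rp$, a contradiction since every term of $CS(r)$ is $2$ or $3$.

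The sign and length bookkeeping is routine once $S(z_1y_2)$ is pinned down. I expect the main obstacle to be the two $J\subsetneq M$ subcases: there one must locate the splitting vertices of $e_2',e_3'$ precisely via Lemma~\ref{lem:vertex_position} before the forbidden configuration (a term $1$ or $4+c$ in $CS(r)$, or a two-edge path violating Corollary~\ref{cor:max_piece_2}(2)) can be read off. All of these follow the template already established for $r=2/5$, so the real work is verifying that the longer central block $S_2=((n-1)\langle 2\rangle)$ does not disturb the vertex-position arguments.
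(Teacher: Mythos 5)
Your proposal takes exactly the route the paper intends: the paper's entire proof of this lemma is the single sentence that (1) and (2) are ``analogous'' to Lemma~\ref{lem:case1-2}, and you have carried out that adaptation faithfully, including the vertex-location arguments in the $J\subsetneq M$ subcases. Your treatment of (1) with $J=M$ is in fact a legitimate simplification of the $2/5$ template: for $r=2/5$ the paper needs a propagation argument around the annulus precisely because $S_2=(2)$ contributes no consecutive $2$'s, whereas for $n\ge 3$ each $w_i'$ already supplies $(2,2)$, so the clash between $(1,1)$ and $(2,2)$ inside $CS(s')$ via Lemma~\ref{properties}(2) is immediate, as you observe. Part (2) is correct as written and matches the template of Lemma~\ref{lem:case1-2}(2).

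One bookkeeping error in (1), $J\subsetneq M$, should be fixed: you assert $S(\phi(e_2''e_3''))=(1,3,(n-1)\langle 2\rangle,2)$, but $\phi(e_2''e_3'')$ has $|u_r|-2=4n$ letters while your sequence sums to only $2n+4$; you substituted $S_2$ into the $2/5$ formula $(1,S_1,S_2,2)$ without accounting for the rest of the cyclic word $\lp 3,(n-1)\langle 2\rangle,3,(n-1)\langle 2\rangle\rp$ around $\partial D_1'$. Since $\phi(e_2')=z_1'$ and $\phi(e_3')=y_2'$ occupy only two letters, the correct sequence in the configuration generalizing Figure~\ref{fig.lemma-2-1(2)} is $(1,(n-2)\langle 2\rangle,3,(n-1)\langle 2\rangle,2)$, with the mirror configuration giving $(2,(n-1)\langle 2\rangle,3,(n-2)\langle 2\rangle,1)$; the two formulas coincide with $(1,3,2,2)$ only when $n=2$. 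This slip does not damage the logic: each corrected sequence still contains $(S_1,S_2,\ell)$, respectively $(\ell,S_2,S_1)$, so Corollary~\ref{cor:max_piece_2}(2) still shows that $\phi(e_2''e_3'')$ cannot be a product of two pieces, which contradicts the fact that $\phi(e_2'')$ and $\phi(e_3'')$ are each pieces. With that equality repaired, your write-up is a correct implementation of the paper's proof.
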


\begin{proof}
The proofs of (1) and (2) are analogous to those of
(1) and (2) in Lemma~\ref{lem:case1-2}.
\end{proof}

\begin{lemma}
\label{lem:case3-2}
Let $r=(n+1)/(3n+2)=[2,1,n]$, where $n \ge 2$ is an integer.
Under Hypothesis~C, the following hold for every $i$.
\begin{enumerate}[\indent \rm (1)]
\item $S(z_iy_{i+1})=(1, 2)$ is not possible,
nor is $S(z_iy_{i+1})=(2, 1)$ possible.

\item $S(z_iy_{i+1})=(n_1 \langle 3 \rangle, 2, n_2 \langle 3 \rangle)$
with $n_1, n_2\in \ZZ_+\cup\{0\}$ is impossible.
\end{enumerate}
\end{lemma}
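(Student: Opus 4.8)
The plan is to argue by contradiction in the same spirit as Lemmas ~\ref{lem:case3-1(b)} and \ref{lem:case1-2}, exploiting the identity $CS(\phi(\partial D_j))=CS(r)=\lp n\langle 3\rangle, 2, n\langle 3\rangle, 2\rp$ for each face together with the gluing relations $S(z_jz_j'^{-1})=S(y_{j+1}'^{-1}y_{j+1})=S_1=(n\langle 3\rangle)$ supplied by Hypothesis ~C. After a cyclic shift of indices we may assume $i=1$. Assuming a forbidden value of $S(z_1y_2)$, I would first reconstruct the local shape of $J$ near the faces $D_1$ and $D_2$: the decomposition $S(y_jw_jz_j)=(S(y_j),(2),S(z_j))$ realizes each $w_j$ as an \emph{isolated} $(2)$-block (so $CS(s)$ and, when $J=M$, $CS(s')$ always contain $S_2=(2)$), and Remark ~\ref{rem:(2)holds}(3) pins down the admissible lengths and signs of $z_1,y_2$. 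The gluing relations then determine the inner words $z_1',y_2'$ up to the single sign at the inner junction $z_1'\,|\,y_2'$, which is the only remaining freedom, and I would split into the cases $J=M$ and $J\subsetneq M$ exactly as in the earlier lemmas.

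For part (1), suppose $S(z_1y_2)=(1,2)$, the case $(2,1)$ being symmetric under orientation reversal. Then $z_1$ is a single $(1)$-block and $y_2$ a single $(2)$-block, so the gluing relations force $S(z_1')=((n-1)\langle 3\rangle,2)$ and $S(y_2')=(1,(n-1)\langle 3\rangle)$. When $J=M$ the inner label is $u_{s'}^{\pm1}$, so $CS(s')$ must obey Lemma ~\ref{properties}. If the inner junction has opposite signs, then $S(z_1'y_2')=((n-1)\langle 3\rangle,2,1,(n-1)\langle 3\rangle)$ displays the three distinct block lengths $1,2,3$, contradicting Lemma ~\ref{properties}; if it has equal signs the two blocks merge and $S(z_1'y_2')=((2n-1)\langle 3\rangle)$ contains $n$ consecutive $3$'s, so $CS(s')$ contains $S_1$ as well as the $S_2$ coming from the $w_j'$, whence Proposition ~\ref{connection} gives $s'\notin I_1(r)\cup I_2(r)$, a contradiction. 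When $J\subsetneq M$ I would instead invoke Lemma ~\ref{lem:vertex_position} to place the endpoints of $e_2'$ and $e_3'$ in the appropriate $(3)$- or $(2)$-segments, read off $CS(\phi(\partial D_1'))=CS(r)$, and extract a forbidden term; in the borderline configuration where this is inconclusive, the relevant next-layer boundary word has $S$-sequence containing $(S_1,S_2)$ or $(S_2,S_1)$ and so cannot be a product of two pieces by Corollary ~\ref{cor:max_piece_2}(2), contradicting Convention ~\ref{convention}.

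For part (2) the same machinery applies, but the central $(2)$ of $(n_1\langle 3\rangle,2,n_2\langle 3\rangle)$ can arise in several ways — from two abutting $(1)$-blocks of equal sign at the junction $z_1\,|\,y_2$, or from a $(2)$-block contributed entirely by one of $z_1,y_2$ — and each must be tracked through the gluing relations. In the subcases where these relations make $z_1'$ and $y_2'$ abut in equal-sign $(2)$-blocks, they merge into a $(4)$, forcing a term $4$ into $CS(s')$ alongside the ubiquitous $2$'s and already violating Lemma ~\ref{properties}. The genuine difficulty — and the main obstacle — is the remaining case, in which the inner junction has opposite signs and yields $S(z_1'y_2')=((n-1-n_1)\langle 3\rangle,2,2,(n-1-n_2)\langle 3\rangle)$: all block lengths then lie in $\{2,3\}$, so no local clash with Lemma ~\ref{properties} is available and the central $(2)$ masquerades as a legitimate $S_2$.

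To close this residual case I expect to have to \emph{propagate} the configuration, mimicking the repeating argument in the $J=M$ branch of Lemma ~\ref{lem:case1-2}(1): the forced shape of $z_1',y_2'$ feeds back into the decomposition of the neighbouring junction, and iterating around the finite cyclic layer should eventually produce either a run of at least $n$ consecutive $3$'s — so that $CS(s')$ contains both $S_1$ and $S_2$ and Proposition ~\ref{connection} applies — or, when $J\subsetneq M$, a next-layer boundary word whose $S$-sequence contains $(S_1,S_2)$, contradicting Corollary ~\ref{cor:max_piece_2}(2). Showing that this propagation is genuinely forced at every junction, and not merely at the initial one, is the technically delicate point of the whole argument, and is where I would expect to spend most of the effort.
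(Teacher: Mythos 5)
Your part (1) is essentially the paper's argument (the non-merged inner junction yields a term $1$ alongside terms $3$ in $CS(s')$, contradicting Lemma~\ref{properties} when $J=M$, and a subsequence $(\ell_1,2,1,\ell_2)$ in $S(\phi(e_2'e_3'))$, hence a term $1$ in $CS(\phi(\partial D_1'))=CS(r)$, when $J\subsetneq M$), and your extra ``equal signs'' branch there is harmless. But part (2) has a genuine gap: you single out the opposite-sign configuration $S(z_1'y_2')=((n-1-n_1)\langle 3\rangle,2,2,(n-1-n_2)\langle 3\rangle)$ as the main obstacle and offer only a hoped-for propagation argument around the layer, which you never carry out. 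In fact that configuration never occurs, and seeing this needs no propagation --- only the local sign bookkeeping you omitted. The sign at the inner junction $z_1'\mid y_2'$ is \emph{not} a residual freedom: the relations $S(z_1z_1'^{-1})=S(y_2'^{-1}y_2)=(n\langle 3\rangle)$ determine from block lengths alone whether a merge occurs at the junctions $z_1\mid z_1'^{-1}$ and $y_2'^{-1}\mid y_2$ (a block of length $<3$ must merge into a $3$, while a full $3$-block cannot merge), and this pins down the signs of the last letter of $z_1'$ and the first letter of $y_2'$ in terms of the signs at the outer junction $z_1\mid y_2$. Working this out in each of your subcases shows the two inner blocks flanking the junction always carry \emph{equal} signs: for instance, in the two-abutting-$1$'s case, with $\epsilon$ the common sign at the outer junction, the forced merges give both inner junction letters the sign $-\epsilon$, so the two inner $2$-blocks (resp.\ the $3$- and $1$-blocks in the other subcases) must coalesce into a $4$. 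This is precisely what the paper reads off from Figure~\ref{fig.III-lemma-2-3}: in every subcase a term $4$ appears on the inner side, contradicting Lemma~\ref{properties} when $J=M$ (Hypothesis~C already puts a term $2$ in $CS(s')$), and forcing a term $4+c$ into $CS(\phi(\partial D_1'))=CS(r)$ via Lemma~\ref{lem:vertex_position}(2) when $J\subsetneq M$. Your ``technically delicate point'' is thus an artifact of not exploiting the sign constraints, and your proposal as written does not prove part (2).

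A secondary flaw: in the $J\subsetneq M$ merged branch of part (1) you invoke Corollary~\ref{cor:max_piece_2}(2), but the sequence that would arise, $(\ell_1,(2n-1)\langle 3\rangle,\ell_2)$ with $\ell_1,\ell_2\in\{1,2\}$, contains $(S_1,S_2)$ neither as a proper initial subsequence nor $(S_2,S_1)$ as a proper terminal one, so that corollary does not apply as stated. Had the case been possible, the correct (and easier) contradiction is that $2n-1>n$ consecutive $3$'s cannot occur in $CS(r)=\lp n\langle 3\rangle,2,n\langle 3\rangle,2\rp$; the same sign computation as above shows the branch is vacuous in any event.
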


\begin{proof}
(1) Suppose on the contrary that $S(z_1y_2)=(1, 2)$.
(The other case is proved similarly.)
Then $|z_1|=1$ and $|y_2|=2$.
Here, if $J=M$, then $CS(\phi (\delta^{-1}))=CS(s')$
contains both a term $1$ and a term $3$
by Figure~\ref{fig.III-lemma-2-2}(a),
contradicting \cite[Lemma~3.8]{lee_sakuma_2}.
On the other hand,
if $J \subsetneq M$, then by Lemma~\ref{lem:vertex_position}(2)
the initial vertex of $e_2'$ lies in
the segment of $\partial D_1^-$ corresponding to $S_2=(2)$
and the terminal vertex of $e_3'$ lies in
the segment of $\partial D_2^-$ corresponding to $S_2=(2)$.
This implies that
a subsequence of the form $(\ell_1, 2, 1, \ell_2)$
with $\ell_1, \ell_2 \in \ZZ_+$
occurs in $S(\phi(e_2'e_3'))$ (see Figure~\ref{fig.III-lemma-2-2}(b)),
so in $CS(\phi(\partial D_1'))=CS(r)$,
a contradiction.

\begin{figure}[h]
\includegraphics{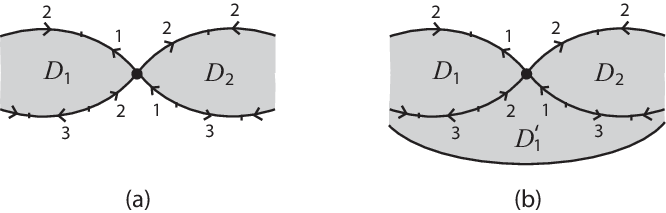}
\caption{
Lemma~\ref{lem:case3-2}(1) where $S(z_1y_2)=(1, 2)$}
\label{fig.III-lemma-2-2}
\end{figure}

(2) Suppose on the contrary that
$S(z_1y_2)=(n_1 \langle 3 \rangle, 2, n_2 \langle 3 \rangle)$.
Then one of the following holds.
\begin{enumerate}[\indent \rm (i)]
\item Either both $S(z_1)=(n_1 \langle 3 \rangle)$ and
$S(y_2)=(2, n_2 \langle 3 \rangle)$,
or both
$S(z_1)=(n_1 \langle 3 \rangle, 2)$ and
$S(y_2)=(n_2 \langle 3 \rangle)$
(see Figure~\ref{fig.III-lemma-2-3}(a)).

\item $S(z_1)=(n_1 \langle 3 \rangle,1)$ and
$S(y_2)=(1, n_2 \langle 3 \rangle)$
(see Figure~\ref{fig.III-lemma-2-3}(b)).
\end{enumerate}
Suppose $J=M$.
Then $CS(\phi (\delta^{-1}))=CS(s')$
contains a term $4$ by Figure~\ref{fig.III-lemma-2-3}.
On the other hand,
by the assumption that Hypothesis~C holds,
$CS(s')$ also contains a term $2$.
This contradicts \cite[Lemma~3.8]{lee_sakuma_2}.
Suppose $J \subsetneq M$.
Then, by using Lemma~\ref{lem:vertex_position}(2)
as in the proof of Lemma~\ref{lem:case3-2}(1),
we see that
a term $4$ occurs in $S(\phi(e_2'e_3'))$.
This implies that $CS(\phi(\partial D_1'))=CS(r)$ contains a term of the form
$4+c$ with $c \in \ZZ_+ \cup \{0\}$, a contradiction.
\end{proof}

\begin{figure}[h]
\includegraphics{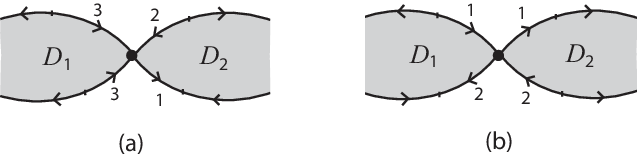}
\caption{
Lemma~\ref{lem:case3-2}(2) where (a) (i) occurs,
and (b) (ii) occurs}
\label{fig.III-lemma-2-3}
\end{figure}

\section{Proof of Main Theorem~\ref{main_theorem} for $K(2/5)$}
\label{proof_for_twist_links_1}

Suppose $r=2/5=[2,2]$. Recall from \cite[Example~3.17(2)]{lee_sakuma_2}
that $CS(2/5)=\lp 3, 2, 3, 2 \rp$, where $S_1=(3)$ and $S_2=(2)$.
For two distinct elements $s, s' \in I_1(2/5) \cup I_2(2/5)$,
suppose on the contrary that the unoriented loops
$\alpha_s$ and $\alpha_{s'}$ are homotopic in $S^3-K(2/5)$,
namely we suppose Hypothesis~A.
We will derive a contradiction in each case to consider.
By Lemma~\ref{lem:boundary_layer}, there are two big cases
to consider.

\medskip
\noindent
{\bf Case 1.} {\it Hypothesis~B holds.}
\medskip

By Corollary~\ref{cor:case1-1},
Case~1 is reduced to the following five cases.

\medskip
\noindent
{\bf Case 1.a.} {\it $CS(s)=\lp 5, 5 \rp$.}
\medskip

Let $v'$ and $v''$ be subwords of the cyclic word
$(\phi(\alpha))=(u_s)$ such that
$(v'v'')=(u_s)$ and $S(v')=S(v'')=(5)$.
Then by using Lemma~\ref{lem:case1-1(a)}
and the facts that $0\le |z_i|, |y_i|\le 2$
and $|w_i|=3$,
we may assume, after a cyclic shift of indices,
that $v'=w_1z_1y_2$ where $|z_1|=0$ and $|y_2|=2$.
This implies $v''=w_2z_2y_1$ where $|z_2|=0$ and $|y_1|=2$.
Thus $J$ is as illustrated in Figure~\ref{fig.case-1a}(a).
If $J=M$,
then $CS(\phi(\delta^{-1}))=CS(s')$ also becomes $\lp 5, 5 \rp$,
which gives $s'=s$, contradicting the hypothesis of the theorem.
Suppose $J \subsetneq M$.
By using Lemma~\ref{lem:vertex_position}(1)
as in the last step of
the proof of Lemma~\ref{lem:case1-1(b)}(1),
we see that
the interior of each of the two
segments of $\partial D_i^-$ with weight $3$
contains a (unique) vertex of $M$.
Moreover by using the fact that $CS(2/5)$ consists of $3$ and $2$,
we see that the position of two vertices is as illustrated in
Figure~\ref{fig.case-1a}(b).
This implies that
$J\cup J_1$ is as illustrated in
Figure~\ref{fig.case-1a}(b),
where $J_1$ is the outer boundary layer of $M-J$.
Thus the inner boundary label of $J_1$ is again $\lp 5, 5 \rp$.
By repeating this argument, we see that
the cyclic $S$-sequence of an inner boundary label of $M$,
namely $CS(s')$,
also becomes $\lp 5, 5 \rp$ regardless of the number of layers of $M$,
and so $s'=s$, contradicting
the hypothesis of the theorem.

\begin{figure}[h]
\includegraphics{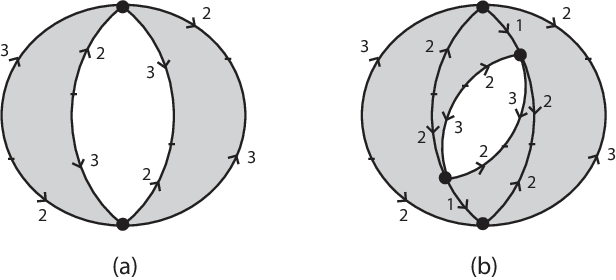}
\caption{
Case 1.a}
\label{fig.case-1a}
\end{figure}

\medskip
\noindent
{\bf Case 1.b.} {\it $CS(s)$ consists of $2$ and $3$.}
\medskip

By \cite[Proposition~3.19(1)]{lee_sakuma_2},
$s \notin I_1(2/5) \cup I_2(2/5)$, contradicting the hypothesis of the theorem.

\medskip
\noindent
{\bf Case 1.c.} {\it $CS(s)$ consists of $3$ and $4$.}
\medskip

By Lemmas~\ref{lem:case1-1(a)},~\ref{lem:case1-1}(1) and (2),
$CS(s)$ must be $\lp 4, 3, 4, 3 \rp$.
Then by Lemma~\ref{lem:case1-1(b)}(2),
there is only one possibility:
$J$ consists of two $2$-cells and
\[
\begin{aligned}
CS(\phi(\alpha))=CS(\phi(\partial D_1^+ \partial D_2^+))
&=\lp S(z_2y_1), S(w_1), S(z_1y_2), S(w_2) \rp \\
&=\lp 4, 3, 4, 3 \rp,
\end{aligned}
\]
where $|y_j|=|z_j|=2$ for $j=1, 2$.
Here, if $J=M$ (see Figure~\ref{fig.case-1c}(a)),
then $CS(\phi(\delta^{-1}))=CS(s')$ becomes $\lp 6 \rp$,
contradicting \cite[Lemma~3.8]{lee_sakuma_2}.
On the other hand,
if $J \subsetneq M$,
then, by an argument similar to that in Case 1.a,
we see that $J\cup J_1$ is as illustrated in
Figure~\ref{fig.case-1a}(b),
where $J_1$ is the outer boundary layer of $M-J$.
Thus if the number of layers of $M$ is two,
then $CS(s')$ also becomes $\lp 4, 3, 4, 3 \rp$, which gives $s'=s$,
contradicting the hypothesis of the theorem.
If the number of layers of $M$ is bigger than $2$,
then, by an argument
using Lemma~\ref{lem:vertex_position} as in the last step of
the proof of Lemma~\ref{lem:case1-1(b)}(1),
a subsequence of the form $(\ell_1,4,\ell_2)$
with $\ell_1, \ell_2 \in \ZZ_+$
occurs in the inner boundary of $J_1$,
so in $CS(2/5)= \lp 3,2,3,2 \rp$, a contradiction.

\begin{figure}[h]
\includegraphics{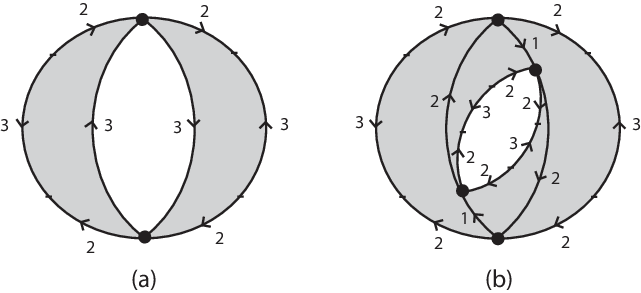}
\caption{
Case 1.c}
\label{fig.case-1c}
\end{figure}

\medskip
\noindent
{\bf Case 1.d.} {\it $CS(s)$ consists of $4$ and $5$.}
\medskip

By Lemmas~\ref{lem:case1-1(a)} and \ref{lem:case1-1(b)}(2),
every subword of the cyclic word
$(u_s)=(\phi(\alpha))$ corresponding to a term $4$ in $CS(s)$
must be $z_jy_{j+1}$ with $|z_j|=|y_{j+1}|=2$ for some $j$.
Without loss of generality, we may assume that $S(z_1y_2)=(4)$,
where $|z_1|=|y_2|=2$.
Since $CS(s)$ consists of $4$ and $5$, we have either $S(w_2z_2y_3)=(4)$
or $S(w_2z_2y_3)=(5)$.
But by Lemma~\ref{lem:case1-1(b)}(2),
$S(w_2z_2y_3)=(5)$,
where $|z_2|=0$ and $|y_3|=2$.
By the repetition of this argument, we have
$S(w_iz_iy_{i+1})=(5)$, where $|z_i|=0$ and $|y_{i+1}|=2$ for every $i$.
This is obviously a contradiction, since we assumed that $|z_1|=2$.

\medskip
\noindent
{\bf Case 1.e.} {\it $CS(s)$ consists of $5$ and $6$.}
\medskip

In this case, $J=M$ by
Lemma~\ref{lem:case1-1}(3b).
By Lemmas~\ref{lem:case1-1(a)} and \ref{lem:case1-1(b)}(2),
every subword of the cyclic word
$(u_s)=(\phi(\alpha))$ corresponding to a term $6$ in $CS(s)$
must be $w_jw_{j+1}$, where $|z_j|=|y_{j+1}|=0$, for some $j$
(cf. proof of Lemma~\ref{lem:case1-1}(3a)).
Without loss of generality, we may assume that $S(w_1w_2)=(6)$,
where $|z_1|=|y_2|=0$.
Since $CS(s)$ consists of $5$ and $6$,
we have the following three possibilities
by Lemmas~\ref{lem:case1-1(a)} and \ref{lem:case1-1(b)}(2):
\begin{enumerate}[\indent (i)]
\item $S(z_2w_3)=(5)$, where $|z_2|=2$ and $|y_3|=0$;

\item $S(w_3y_4)=(5)$, where $|z_2|=|y_3|=|z_3|=0$ and $|y_4|=2$;

\item $S(w_3w_4)=(6)$, where $|z_2|=|y_3|=|z_3|=|y_4|=0$.
\end{enumerate}
If (ii) or (iii) occurs, then $S(w_2z_2y_3w_3)=(3,3)$,
contradicting Lemma~\ref{lem:case1-1(b)}(6).
Hence only (i) can occur. By the repetition of this argument, we have
$S(z_iw_{i+1})=(5)$, where $|z_i|=2$ and $|y_{i+1}|=0$, for every $i$.
This is obviously a contradiction, since we assumed $|z_1|=0$.

\medskip
\noindent
{\bf Case 2.} {\it Hypothesis~C holds.}
\medskip

By Remark~\ref{rem:(2)holds}(1),
the cyclic $S$-sequence $CS(\phi(\alpha))=CS(s)$ contains a term $2$.
Hence, by \cite[Lemma~3.8]{lee_sakuma_2}, Case~2 is reduced to the following three subcases:
if $CS(s)$ has the form $\lp m, m \rp$, then $m$ is $2$,
while if $CS(s)$ has the form consisting of $m$ and $m+1$,
then $m$ is either $1$ or $2$.

\medskip
\noindent
{\bf Case 2.a.} {\it $CS(s)=\lp 2, 2 \rp$.}
\medskip

In this case, there is only one possibility:
$J$ consists of one $2$-cell, namely
$CS(\phi(\alpha))=CS(\phi(\partial D_1^+))=
\lp S(z_0y_1), S(w_1) \rp=\lp 2, 2 \rp$
with $|y_1|=|z_0|=1$. But this contradicts Lemma~\ref{lem:case1-2}(2).

\medskip
\noindent
{\bf Case 2.b.} {\it $CS(s)$ consists of $1$ and $2$.}
\medskip

Recall from Remark~\ref{rem:(2)holds}(1) that
$S(\phi(\partial D_i^+))=(S(y_i),S(w_i),S(z_i))=
(\ell_{i,1}, 2, \ell_{i,2})$, where $1 \le \ell_{i,j} \le 2$
are integers.
By using Lemma~\ref{lem:case1-2}(2) and
the assumption that $CS(s)$ consists of $1$ and $2$,
we see $S(z_iy_{i+1})=(\ell_{i,2},\ell_{i+1,1})$
and therefore
$CS(s)= \lp \ell_{1,1}, 2, \ell_{1,2},
\dots, \ell_{t,1}, 2, \ell_{t,2} \rp$.
By Lemma~\ref{lem:case1-2}(1),
$(\ell_{i, 2}, \ell_{i+1, 1})$ is one of
$(1, 1)$, $(1, 2)$ and $(2, 1)$ for every $i$.
Thus if the number $t$ of the $2$-cells of $J$ is one,
then $CS(\phi(\alpha))=CS(s)$
is either $\lp 1, 2, 2 \rp$ or $\lp 1, 2, 1 \rp$,
both yielding a contradiction to \cite[Proposition~3.19(1)]{lee_sakuma_2}.
Hence $t\ge 2$.

First, assume that $S(z_1y_2)=(\ell_{1, 2}, \ell_{2, 1})=(1, 1)$.
Then $(\ell_{i, 2}, \ell_{i+1, 1})$ is $(1, 1)$ for every $i$,
for otherwise $CS(s)$
would contain consecutive $1$'s and consecutive $2$'s,
contradicting \cite[Lemma~3.8]{lee_sakuma_2}.
Then we have $CT(s)= \lp t\langle 2\rangle \rp$ and therefore
we have $t=2$ by \cite[Lemma~3.8]{lee_sakuma_2} and
\cite[Corollary~3.14]{lee_sakuma_2}.
Thus $J$ is as illustrated in Figure~\ref{fig.case-2b-iii}(a)
and we have
$CS(\phi(\alpha))=CS(\phi(\partial D_1^+ \partial D_2^+))
=\lp 1, 2, 1, 1, 2, 1 \rp = \lp 2, 1, 1, 2, 1, 1 \rp$.
If $J=M$, then $CS(\phi(\delta^{-1}))=CS(s')=\lp 2, 2, 2, 2, 2, 2 \rp$,
contradicting \cite[Lemma~3.8]{lee_sakuma_2}.
On the other hand, if $J \subsetneq M$,
then, by using Lemma~\ref{lem:vertex_position}(2)
as in the last step of
the proof of Lemma~\ref{lem:case1-1(b)}(1),
we see that the unique vertex of $M$
in the interior of $\partial D_i^-$ must lie in the
central segment among the three segments of $\partial D_i^-$
with weight $2$ for each $i=1,2$.
By using this fact and the identity $CS(2/5)=\lp 3,2,3,2 \rp$,
we see that $J\cup J_1$, where $J_1$ is the outer layer of
$M-J$, is as illustrated in Figure~\ref{fig.case-2b-iii}(b).
If the number of layers of $M$ is two,
then $CS(s')$ also becomes $\lp 1, 2, 1, 1, 2, 1 \rp$,
which gives $s'=s$, contradicting the hypothesis of the theorem.
If the number of layers of $M$ is bigger than $2$,
then by an argument using Lemma~\ref{lem:vertex_position}(2)
as in the last step of the proof of Lemma~\ref{lem:case1-2}(2),
$CS(2/5)$ would contain a term $1$, a contradiction.

\begin{figure}[h]
\includegraphics{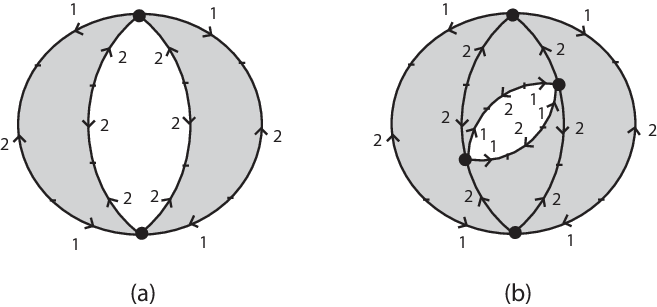}
\caption{
Case 2.b where $S(z_1y_2)=(1, 1)$}
\label{fig.case-2b-iii}
\end{figure}

Next, assume that $S(z_1y_2)=(\ell_{1, 2}, \ell_{2, 1})=(1, 2)$.
(The case for $S(z_1y_2)=(2, 1)$ is similar.)
Then $(\ell_{i, 2}, \ell_{i+1, 1})$ is $(1, 2)$ for every $i$,
for otherwise there would exist some $j$ and $j'$ such that
$(\ell_{j-1, 2}, \ell_{j, 1}, 2, \ell_{j, 2}, \ell_{j+1, 1})=(1, 2, 2, 2, 1)$
and such that
$(\ell_{j'-1, 2}, \ell_{j', 1}, 2, \ell_{j', 2}, \ell_{j'+1, 1})=(2, 1, 2, 1, 2)$,
so $CT(s)$ would contain both a term $1$ and a term $3$,
which together with \cite[Corollary~3.14]{lee_sakuma_2} would yield
a contradiction to \cite[Lemma~3.8]{lee_sakuma_2}.
Thus $CS(s)=\lp 2,2,1,\dots,2,2,1 \rp$ and therefore
$CT(s)= \lp t\langle 2 \rangle \rp$.
Hence, by \cite[Lemma~3.8]{lee_sakuma_2} and \cite[Corollary~3.14]{lee_sakuma_2},
we have $t=2$.
Thus $J$ is as illustrated in Figure~\ref{fig.case-2b-i}(a), and
$CS(\phi(\alpha))=CS(\phi(\partial D_1^+ \partial D_2^+))
=\lp 2, 2, 1, 2, 2, 1 \rp$.
Here, if $J=M$,
then $CS(\delta^{-1})=CS(s')=\lp 1, 2, 2, 1, 2, 2 \rp$,
which gives $s'=s$, contradicting the hypothesis of the theorem.
On the other hand,
if $J \subsetneq M$,
then, by using Lemma~\ref{lem:vertex_position}(2)
(cf. the last step of the proof of Lemma~\ref{lem:case1-2}(2))
and the identity $CS(2/5)=\lp 3,2,3,2 \rp$,
we see that $J\cup J_1$, where $J_1$ is the outer layer of
$M-J$, is as illustrated in Figure~\ref{fig.case-2b-i}(b).
By repeating this argument,
we see that
the cyclic $S$-sequence of an inner boundary label of $M$,
namely $CS(s')$, is $\lp 1, 2, 2, 1, 2, 2 \rp$
regardless of the number of layers of $M$, which also gives $s'=s$,
contradicting the hypothesis of the theorem.

\begin{figure}[h]
\includegraphics{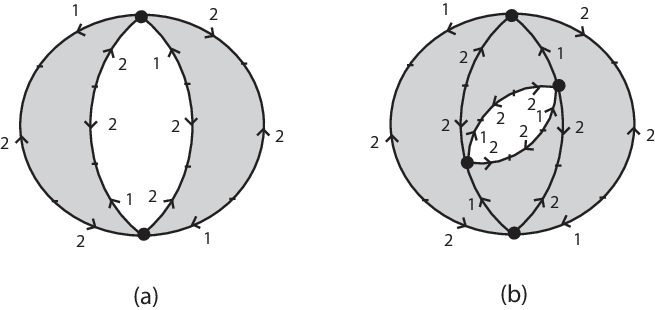}
\caption{
Case 2.b where $S(z_1y_2)=(1, 2)$}
\label{fig.case-2b-i}
\end{figure}

\medskip
\noindent
{\bf Case 2.c.} {\it $CS(s)$ consists of $2$ and $3$.}
\medskip

In this case, by \cite[Proposition~3.19(1)]{lee_sakuma_2},
$s \notin I_1(2/5) \cup I_2(2/5)$, contradicting the hypothesis of the theorem.
\qed

\section{Proof of Main Theorem~\ref{main_theorem}
for $K(n/(2n+1))$ with $n \ge 3$}
\label{proof_for_twist_links_2}

Suppose $r=n/(2n+1)=[2,n]$, where $n \ge 3$ is an integer.
Recall from \cite[Example~3.17(2)]{lee_sakuma_2}that
$CS(r)=\lp 3, (n-1) \langle 2 \rangle, 3, (n-1) \langle 2 \rangle \rp$,
where $S_1=(3)$ and $S_2=((n-1) \langle 2 \rangle)$.
For two distinct elements $s, s' \in I_1(r) \cup I_2(r)$,
suppose on the contrary that the unoriented loops
$\alpha_s$ and $\alpha_{s'}$ are homotopic in $S^3-K(r)$,
namely we suppose Hypothesis~A.
We will derive a contradiction in each case to consider.
By Lemma~\ref{lem:boundary_layer}, there are two big cases
to consider.

\medskip
\noindent
{\bf Case 1.} {\it Hypothesis~B holds.}
\medskip

By Hypothesis~B together with Remark~\ref{rem:(1)holds}(2),
$\phi(\alpha)$ involves a subword $w_i$ whose $S$-sequence is $(3)$,
so the cyclic $S$-sequence $CS(\phi(\alpha))=CS(u_s)=CS(s)$ must contain
a term of the form $3+c$, where $c \in \ZZ_+ \cup \{0\}$.
This together with \cite[Lemma~3.8]{lee_sakuma_2} and Lemma~\ref{lem:case2-1} implies that
$CS(s)$ cannot have the form $\lp m, m \rp$
and that Case~1 is reduced to the following two subcases:
either $CS(s)$ consists of $2$ and $3$
or $CS(s)$ consists of $3$ and $4$.

\medskip
\noindent
{\bf Case 1.a.} {\it $CS(s)$ consists of $2$ and $3$.}
\medskip

Without loss of generality,
we may assume that $2$ occurs in $S(z_1y_2)$.
There are three possibilities:
\begin{enumerate}[\indent \rm (i)]
\item
$S(z_1y_2)$ consists of only $2$, where
$S(z_1)=(n_1\langle 2 \rangle)$,
$S(y_2)=(n_2\langle 2 \rangle)$,
and $S(z_1y_2)=((n_1+n_2)\langle 2 \rangle)$
with $n_1,n_2\in\ZZ_+\cup\{0\}$;

\item
$S(z_1y_2)$ consists of only $2$, where
$S(z_1)=(n_1\langle 2 \rangle,1)$,
$S(y_2)=(1,n_2\langle 2 \rangle)$,
and $S(z_1y_2)=((n_1+n_2+1)\langle 2 \rangle)$
with $n_1,n_2\in\ZZ_+\cup\{0\}$;

\item
$S(z_1y_2)$ consists of $2$ and $3$.
\end{enumerate}

First assume that (i) occurs.
Then $S(z_1'y_2')=((n_1'+n_2')\langle 2 \rangle)$
where $n_1'=(n-1)-n_1$ and $n_2'=(n-1)-n_2$.
So $n_1'+n_2'=2(n-1)-(n_1+n_2)$ and hence
either $S(z_1y_2)$ or $S(z_1'y_2')$ contains
$n-1$ consecutive $2$'s.
If $J=M$, then this implies that either $s \notin I_1(r) \cup I_2(r)$
or $s' \notin I_1(r) \cup I_2(r)$ by \cite[Proposition~3.19(1)]{lee_sakuma_2},
contradicting the hypothesis of the theorem.
On the other hand, if $J \subsetneq M$, then
the above observation implies that
either $S(z_1y_2)$
contains $n-1$ consecutive $2$'s and so $s \notin I_1(r) \cup I_2(r)$,
or otherwise $S(z_1'y_2')$ contains $n$ consecutive $2$'s.
The former case is impossible by the assumption.
In the latter case,
we see, by an argument using Lemma~\ref{lem:vertex_position}(1)
as in the last step of the proof of Lemma~\ref{lem:case1-1(b)}(1),
that a subsequence of the form $(\ell_1, n \langle 2 \rangle, \ell_2)$
with $\ell_1, \ell_2 \in \ZZ_+$
occurs in $S(\phi(e_2'e_3'))$, so in $CS(\phi(\partial D_1'))=CS(r)$,
a contradiction.

Next assume that (ii) occurs.
Then $S(z_1'y_2')=((n_1'+n_2'+1)\langle 2 \rangle)$,
where $n_1'=(n-2)-n_1$ and $n_2'=(n-2)-n_2$.
By using the identity
$n_1'+n_2'+1=2(n-1)-(n_1+n_2+1)$,
this case is treated as in the case when (i) occurs.

Finally assume that (iii) occurs.
If $J=M$ (see Figure~\ref{fig.II-case-1a1}(a)),
then $CS(\phi(\delta^{-1}))=CS(s')$ includes
both a term $1$ and a term of the form $3+c$
with $c \in \ZZ_+ \cup \{0\}$, contradicting \cite[Lemma~3.8]{lee_sakuma_2}.
On the other hand, if $J \subsetneq M$ (see Figure~\ref{fig.II-case-1a1}(b)),
then, by an argument using Lemma~\ref{lem:vertex_position}(1)
as in the last step of
the proof of Lemma~\ref{lem:case1-1(b)}(1),
a subsequence of the form $(\ell_1, 1, \ell_2)$
with $\ell_1, \ell_2 \in \ZZ_+$ occurs in $S(\phi(e_2'e_3'))$,
so in $CS(\phi(\partial D_1'))=CS(r)$, a contradiction.

\begin{figure}[h]
\includegraphics{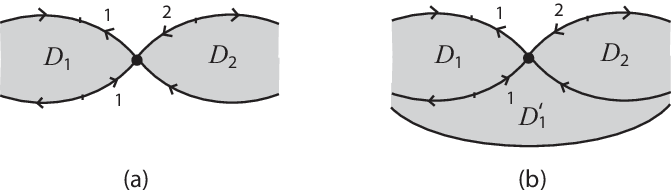}
\caption{
Case 1.a where (iii) occurs}
\label{fig.II-case-1a1}
\end{figure}

\medskip
\noindent
{\bf Case 1.b.} {\it $CS(s)$ consists of $3$ and $4$.}
\medskip

By Lemma~\ref{lem:case2-1}(1) and (2),
$CS(s)$ must be $\lp 4, 3, 4, 3 \rp$.
Then by Lemmas~\ref{lem:case1-1(a)} and \ref{lem:case2-1(b)}(2),
there is only one possibility:
$J$ consists of two $2$-cells, namely
$CS(\phi(\alpha))=CS(\phi(\partial D_1^+ \partial D_2^+))
=\lp S(z_2y_1), S(w_1), S(z_1y_2), S(w_2) \rp
=\lp 4, 3, 4, 3 \rp$ with $|y_j|=|z_j|=2$ for each $j=1, 2$.

First suppose $n=3$.
If $J=M$ (see Figure~\ref{fig.II-case-1b}(a)),
then $CS(\phi(\delta^{-1}))=CS(s')$ also becomes $\lp 4, 3, 4, 3 \rp$
implying that $s'=s$, a contradiction to the hypothesis of the theorem.
On the other hand, if $J \subsetneq M$,
then, by an argument using Lemma~\ref{lem:vertex_position}(1)
as in the last step of
the proof of Lemma~\ref{lem:case1-1(b)}(1),
a subsequence of the form $(\ell_1, 4, \ell_2)$
with $\ell_1, \ell_2 \in \ZZ_+$
occurs in $S(\phi(e_2'e_3'))$, so in $CS(\phi(\partial D_1'))=CS(r)$,
a contradiction.

\begin{figure}[h]
\includegraphics{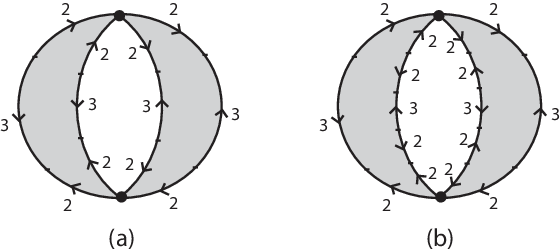}
\caption{
Case 1.b where (a) $r=[2, 3]$ and (b) $r=[2, n]$ with $n=4$}
\label{fig.II-case-1b}
\end{figure}

Next suppose $n\ge 4$.
If $J=M$ (see Figure~\ref{fig.II-case-1b}(b)),
then $CS(\phi(\delta^{-1}))=CS(s')$ includes both a term $2$
and a term $4$,
contradicting \cite[Lemma~3.8]{lee_sakuma_2}.
On the other hand, if $J \subsetneq M$, then we obtain a contradiction
by the same reason as for $n=3$.

\medskip
\noindent
{\bf Case 2.} {\it Hypothesis~C holds.}
\medskip

By Remark~\ref{rem:(2)holds}(2),
the cyclic $S$-sequence $CS(\phi(\alpha))=CS(s)$
properly contains a subsequence $(2,2)$.
Hence, by \cite[Lemma~3.8]{lee_sakuma_2},
Case~2 is reduced to the following two subcases:
either $CS(s)$ consists of $1$ and $2$
or $CS(s)$ consists of $2$ and $3$.

\medskip
\noindent
{\bf Case 2.a.} {\it $CS(s)$ consists of $1$ and $2$.}
\medskip

Repeat the argument of Case~2.b of Section~\ref{proof_for_twist_links_1}
replacing the reference to Lemma~\ref{lem:case1-2} with the reference
to Lemma~\ref{lem:case2-2} to obtain that
$J$ consists of at least two $2$-cells and that
$S(z_iy_{i+1})$ is one of $(1, 1)$, $(1, 2)$ and $(2, 1)$ for every $i$,
so that $CS(s)= \lp \ell_{1,1}, (n-1) \langle 2 \rangle, \ell_{1,2}, \dots,
\ell_{t,1}, (n-1) \langle 2 \rangle, \ell_{t,2} \rp$,
where $(\ell_{i, 2}, \ell_{i+1, 1})$ is one of
$(1, 1)$, $(1, 2)$ and $(2, 1)$ for every $i$.

First, assume that $S(z_1y_2)=(\ell_{1, 2}, \ell_{2, 1})=(1, 1)$.
Then $CS(s)$ contains consecutive $1$'s
and consecutive $2$'s, contradicting \cite[Lemma~3.8]{lee_sakuma_2}.

Next, assume that $S(z_1y_2)=(\ell_{1, 2}, \ell_{2, 1})=(1, 2)$.
(The case for $S(z_1y_2)=(2, 1)$ is similar.)
The same argument of Case~2.b of Section~\ref{proof_for_twist_links_1}
implies that both $CS(\phi(\alpha))=CS(s)$ and $CS(\phi(\delta^{-1}))=CS(s')$ become
$\lp n \langle 2 \rangle, 1, n \langle 2 \rangle, 1 \rp$
(see Figure~\ref{fig.II-case-2a-i}(a) and (b)),
so that $s'=s$, contradicting the hypothesis of the theorem.

\begin{figure}[h]
\includegraphics{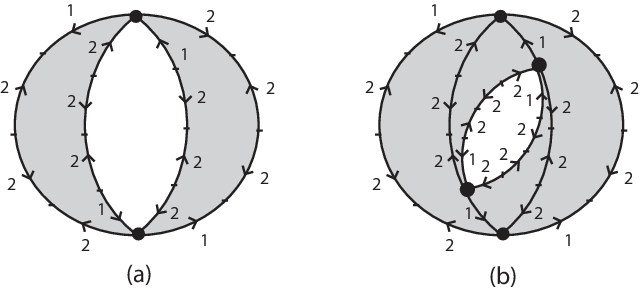}
\caption{
Case 2.a where $S(z_1y_2)=(1, 2)$ and $r=[2, n]$ with $n=3$}
\label{fig.II-case-2a-i}
\end{figure}

\medskip
\noindent
{\bf Case 2.b.} {\it $CS(s)$ consists of $2$ and $3$.}
\medskip

In this case,
$CS(s)$ contains $S_1=(3)$ as a subsequence.
Moreover, $CS(s)$ also contains $S_2=((n-1)\langle 2\rangle)$
by Hypothesis C.
Hence, $s \notin I_1(r) \cup I_2(r)$ by \cite[Proposition~3.19(1)]{lee_sakuma_2},
contradicting the hypothesis of the theorem.
\qed

\section{Proof of Main Theorem~\ref{main_theorem_2}}
\label{proof_for_[2,1,n]}

Let $r=(n+1)/(3n+2)=[2,1,n]$, where $n \ge 2$ is an integer.
Recall from \cite[Example~3.17(1)]{lee_sakuma_2} that
$CS(r)=\lp n \langle 3 \rangle, 2, n \langle 3 \rangle, 2 \rp$,
where $S_1=(n \langle 3 \rangle)$ and $S_2=(2)$.
For two distinct elements $s, s' \in I_1(r) \cup I_2(r)$,
suppose that the unoriented loops
$\alpha_s$ and $\alpha_{s'}$ are homotopic in $S^3-K(r)$,
namely we suppose Hypothesis~A.
We will prove the assertion by showing that
$\alpha_s$ and $\alpha_{s'}$ are homotopic in $S^3-K(r)$
for both $r=3/8$ and the set $\{s, s'\}$ equals
either $\{1/6, 3/10\}$ or $\{3/4, 5/12\}$
and that we obtain a contradiction in the other cases.
By Lemma~\ref{lem:boundary_layer},
there are two big cases to consider.

\medskip
\noindent
{\bf Case 1.} {\it Hypothesis~B holds.}
\medskip

By Hypothesis~B along with Remark~\ref{rem:(1)holds}(3),
$\phi(\alpha)$ involves a subword $w_i$
whose $S$-sequence is $(n \langle 3 \rangle)$,
so the cyclic $S$-sequence $CS(\phi(\alpha))=CS(s)$ must contain
a term of the form $3+c$, where $c \in \ZZ_+ \cup \{0\}$.
This together with \cite[Lemma~3.8]{lee_sakuma_2} and Lemma~\ref{lem:case3-1} implies that
Case~1 is reduced to the following three subcases:
if $CS(s)$ has the form $\lp m, m \rp$, then $m$ is either $3$ or $6$,
while if $CS(s)$ consists of $m$ and $m+1$,
then $m$ is either $2$ or $3$.

\medskip
\noindent
{\bf Case 1.a.} {\it $CS(s)=\lp 3, 3 \rp$.}
\medskip

There is only one possibility:
$n=2$ and $J$ consists of one $2$-cell, namely
$CS(\phi(\alpha))=CS(\phi(\partial D_1^+))=CS(w_1)=\lp 3, 3 \rp$,
where $|y_1|=|z_1|=0$.
Here, if $J=M$ (see Figure~\ref{fig.III-case-1a}(a)),
then $CS(\phi(\delta^{-1}))=CS(s')$ becomes
$\lp 2, 3, 3, 2 \rp$, contradicting \cite[Lemma~3.8]{lee_sakuma_2}.
On the other hand, if $J \subsetneq M$
(see Figure~\ref{fig.III-case-1a}(b)),
then, by an argument using Lemma~\ref{lem:vertex_position}(1)
as in the last step of
the proof of Lemma~\ref{lem:case1-1(b)}(1),
a subsequence of the form $(\ell_1, 2, 2, \ell_2)$
with $\ell_1, \ell_2 \in \ZZ_+$
occurs in $S(\phi(e_2'e_1'))$, so in $CS(\phi(\partial D_1'))=CS(r)$,
a contradiction.

\begin{figure}[h]
\includegraphics{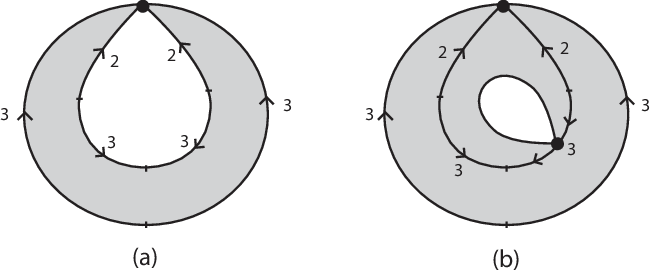}
\caption{
Case 1.a where $r=[2, 1, 2]$}
\label{fig.III-case-1a}
\end{figure}

\medskip
\noindent
{\bf Case 1.b.} {\it $CS(s)=\lp 6, 6 \rp$.}
\medskip

There is only one possibility:
$n=2$ and $J$ consists of two $2$-cells, namely
$CS(\phi(\alpha))=CS(\phi(\partial D_1^+ \partial D_2^+))
=CS(w_1w_2)=\lp 6, 6 \rp$,
where $|y_j|=|z_j|=0$ for $j=1, 2$.
So $r=[2, 1, 2]=3/8$ and $s=1/6$.
Here, if $J=M$ (see Figure~\ref{fig.III-case-1b}(a)),
then $CS(\phi(\delta^{-1}))=CS(s')$ becomes
$\lp 4, 3, 3, 4, 3, 3 \rp$, and so $s'=3/10$
by \cite[Remark~3.15]{lee_sakuma_2}.
This shows that the loops $\alpha_{1/6}$ and $\alpha_{3/10}$ are homotopic
in $S^3-K(3/8)$.
On the other hand, if $J \subsetneq M$ (see Figure~\ref{fig.III-case-1b}(b)),
then, by an argument using Lemma~\ref{lem:vertex_position}(1)
as in the last step of
the proof of Lemma~\ref{lem:case1-1(b)}(1),
a subsequence of the form $(\ell_1, 4, \ell_2)$
with $\ell_1, \ell_2 \in \ZZ_+$
occurs in $S(\phi(e_2'e_3'))$, so in $CS(\phi(\partial D_1'))=CS(r)$,
a contradiction.

\begin{figure}[h]
\includegraphics{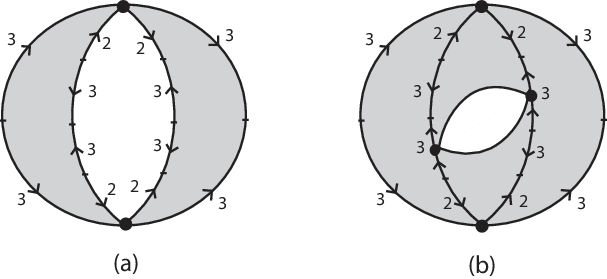}
\caption{
Case 1.b where $r=[2, 1, 2]$}
\label{fig.III-case-1b}
\end{figure}

\medskip
\noindent
{\bf Case 1.c.} {\it $CS(s)$ consists of $2$ and $3$.}
\medskip

In this case,
$CS(s)$ contains $S_2=(2)$ as a subsequence.
Moreover, $CS(s)$ also contains $S_1=(n\langle 3\rangle)$
by Hypothesis~B.
Hence, $s \notin I_1(r) \cup I_2(r)$ by \cite[Proposition~3.19(1)]{lee_sakuma_2},
contradicting the hypothesis of the theorem.
\qed

\medskip
\noindent
{\bf Case 1.d.} {\it $CS(s)$ consists of $3$ and $4$.}
\medskip

We first observe that either
$|z_i| \neq 0$ or $|y_{i+1}| \neq 0$ for every $i$.
Suppose on the contrary that $|z_i|=|y_{i+1}|=0$ for some $i$.
Then, since $CS(s)$ consists of $3$ and $4$ by assumption,
we have $S(w_{i,e}z_iy_{i+1}w_{i+1,b})=S(w_{i,e}w_{i+1,b})=(3,3)$,
contradicting Lemma~\ref{lem:case3-1(b)}(4).

Next, we observe $|z_i| \neq 0$ and $|y_{i+1}| \neq 0$ for every $i$.
Suppose on the contrary that $|z_i|=0$ for some $i$.
(The case $|y_{i+1}|=0$ is treated similarly.)
Then, by the preceding observation
and Remark~\ref{rem:(1)holds}(3),
we see $|y_{i+1}|=1$ or $2$.
Since $S(w_{i,e}z_iy_{i+1})$ is not equal to $(4)$ nor $(5)$
by Lemma~\ref{lem:case3-1(b)}(2) and (3),
this implies that $S(w_{i,e}z_iy_{i+1})$ is equal to
$(3,1)$ or $(3,2)$
and hence $S(w_{i,e}z_iy_{i+1}w_{i+1,b})$
is equal to $(3,1,3)$ or $(3,2,3)$.
This contradicts the assumption that
$CS(s)$ consists of $3$ and $4$.

Thus we have shown that
$|z_i| \neq 0$ and $|y_{i+1}| \neq 0$ for every $i$.
So the assumption that $CS(s)$ consists of $3$ and $4$
implies that $S(z_iy_{i+1})=(3)$ or $(4)$.
However, the former is impossible by Lemma~\ref{lem:case3-1(b)}(1).
Therefore $S(z_iy_{i+1})=(4)$ with $|z_i|=|y_{i+1}|=2$  for every $i$.
Thus $CS(s)=\lp 4,n\langle 3\rangle,\dots,4,n\langle 3\rangle \rp$,
where the subsequence $(4,n\langle 3\rangle)$ appears $t$-times
in $CS(s)$ with $t$ the number of $2$-cells of $J$.
Then $CT(s)=\lp t\langle n\rangle \rp$, and hence $t=2$ by
\cite[Lemma~3.8]{lee_sakuma_2} and \cite[Corollary~3.14]{lee_sakuma_2}.
So, $CS(s)=\lp 4,n\langle 3\rangle,4,n\langle 3\rangle \rp$.

Suppose first that $n=2$.
Then $r=[2,1,2]=3/8$ and $J$ is obtained from the map in
Figure~\ref{fig.III-case-1b}(a) by reversing
the outer and inner boundaries.
Thus $CS(s)=\lp 4, 3, 3, 4, 3, 3 \rp$ and so $s=3/10$,
and the inner boundary label of $J$ is $\lp 6,6 \rp$.
If $J=M$, then $CS(s')=CS(\phi(\delta^{-1})=\lp 6, 6 \rp$, and hence $s'=1/6$.
This again shows that the loops $\alpha_{3/10}$ and $\alpha_{1/6}$ are homotopic
in $S^3-K(3/8)$.
If $J \subsetneq M$,
then, by an argument using Lemma~\ref{lem:vertex_position}(1)
as in the last step of
the proof of Lemma~\ref{lem:case1-1(b)}(1),
$CS(r)$ must contain a term
$4+c$ with $c\in \ZZ_+\cup\{0\}$, a contradiction.

Suppose next that $n \ge 3$.
If $J=M$ (see Figure~\ref{fig.III-case-1e}(a)),
then $CS(\phi(\delta^{-1}))=CS(s')$ contains
both a term $3$ and a term $6$,
contradicting \cite[Lemma~3.8]{lee_sakuma_2}.
On the other hand, if $J \subsetneq M$ (see Figure~\ref{fig.III-case-1e}(b)),
then, by an argument
using Lemma~\ref{lem:vertex_position}(1)
as in the last step of
the proof of Lemma~\ref{lem:case1-1(b)}(1),
a subsequence of the form $(\ell_1, 4+c, \ell_2)$,
where $\ell_1, \ell_2 \in \ZZ_+$ and $c \in \ZZ_+ \cup \{0\}$,
occurs in $S(\phi(e_2'e_3'))$, so in $CS(\phi(\partial D_1'))=CS(r)$,
a contradiction.

\begin{figure}[h]
\includegraphics{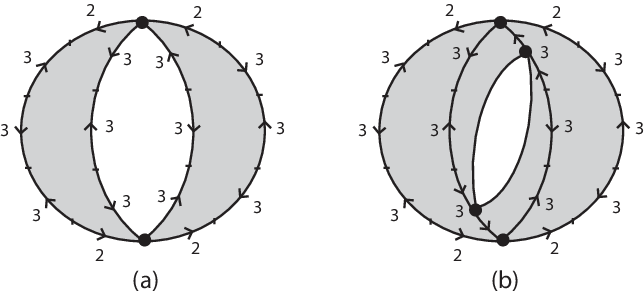}
\caption{
Case 1.d where $r=[2, 1, n]$ with $n=3$}
\label{fig.III-case-1e}
\end{figure}

\medskip
\noindent
{\bf Case 2.} {\it Hypothesis~C holds.}
\medskip

By Remark~\ref{rem:(2)holds}(3),
the cyclic $S$-sequence $CS(\phi(\alpha))=CS(s)$ includes a term $2$.
Hence, by \cite[Lemma~3.8]{lee_sakuma_2}, Case~2 is reduced to the following three subcases:
if $CS(s)$ has the form $\lp m, m \rp$, then $m$ is $2$,
while if $CS(s)$ consists of $m$ and $m+1$,
then $m$ is either $1$ or $2$.

\medskip
\noindent
{\bf Case 2.a.} {\it $CS(s)=\lp 2, 2 \rp$.}
\medskip

There is only one possibility:
$J$ consists of one $2$-cell, namely
$CS(\phi(\alpha))=CS(\phi(\partial D_1^+))=\lp S(z_1y_1), S(w_1) \rp= \lp 2, 2 \rp$.
Then $S(z_1y_1)=(2)$, contradicting
Lemma~\ref{lem:case3-2}(2).

\medskip
\noindent
{\bf Case 2.b.} {\it $CS(s)$ consists of $1$ and $2$.}
\medskip

By Remark~\ref{rem:(2)holds}(3) and the assumption that
$CS(s)$ consists of $1$ and $2$,
we see
$S(\phi(\partial D_i^+))=(S(y_i),S(w_i),S(z_i))=
(\ell_{i,1}, 2, \ell_{i,2})$, where $1 \le \ell_{i,j} \le 2$
are integers.
By using Lemma~\ref{lem:case3-2}(2) and
the assumption that $CS(s)$ consists of $1$ and $2$,
we see $S(z_iy_{i+1})=(\ell_{i,2},\ell_{i+1,1})$
and therefore
$CS(s)= \lp \ell_{1,1}, 2, \ell_{1,2},\dots, \ell_{t,1}, 2, \ell_{t,2} \rp$.
By Lemma~\ref{lem:case3-2}(1),
$(\ell_{i, 2}, \ell_{i+1, 1})$ is either
$(1, 1)$ or $(2, 2)$ for every $i$.
Thus if the number $t$ of the $2$-cells of $J$ is one,
then $CS(\phi(\alpha))=CS(s)$
is either $\lp 1, 2, 1 \rp$ or $\lp 2, 2, 2 \rp$,
both yielding a contradiction to \cite[Proposition~3.19(1)]{lee_sakuma_2}.
Hence $t\ge 2$.
By \cite[Lemma~3.8]{lee_sakuma_2},
we see either
$S(z_iy_{i+1})=(1,1)$ for all $i$
or $S(z_iy_{i+1})=(2,2)$ for all $i$.
However, if the latter holds, then
$CS(s)=\lp 3t\langle 2\rangle \rp$,
a contradiction to \cite[Lemma~3.8]{lee_sakuma_2}.
So, $S(z_iy_{i+1})=(1,1)$ for all $i$,
and therefore $CS(s)=\lp 1,2,1,\dots,1,2,1 \rp$
and $CT(s)=\lp t\langle 2\rangle \rp$.
Hence we have $t=2$
by \cite[Lemma~3.8]{lee_sakuma_2} and \cite[Corollary~3.14]{lee_sakuma_2}.
Thus
$CS(\phi(\alpha))=CS(\phi(\partial D_1^+ \partial D_2^+))
=\lp 1, 2, 1, 1, 2, 1 \rp$.
So $s=3/4$.

First suppose $n=2$, namely $r=3/8$.
If $J=M$ (see Figure~\ref{fig.III-case-2b}(a)),
then $CS(\delta^{-1})=CS(s')$ becomes
$\lp 3, 2, 3, 2, 2, 3, 2, 3, 2, 2 \rp$,
and so $s'=5/12$
by \cite[Remark~3.15]{lee_sakuma_2}.
This shows that the loops $\alpha_{3/4}$ and $\alpha_{5/12}$
are homotopic in $S^3-K(3/8)$.
If $J \subsetneq M$, then
by Lemma~\ref{lem:vertex_position}(2),
the initial vertex of $e_2'$
(resp., the terminal vertex of $e_3'$)
must lie in the central segment of
$\partial D_1^-$ (resp., $\partial D_2^-$) with weight $2$
(see Figure~\ref{fig.III-case-2b}(b)).
Thus a subsequence of the form
$(\ell_1, 2, 2, \ell_2)$
with $\ell_1, \ell_2 \in \ZZ_+$ occurs in $S(\phi(e_2'e_3'))$,
so in $CS(\phi(\partial D_1'))=CS(r)$, a contradiction.

\begin{figure}[h]
\includegraphics{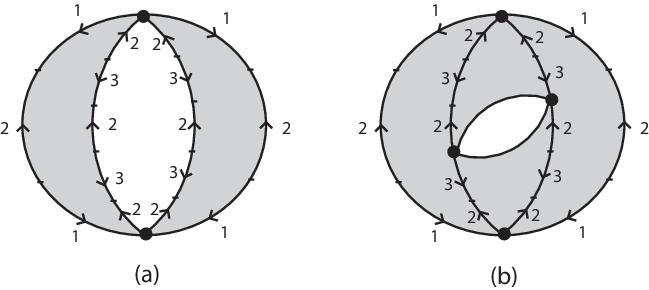}
\caption{
Case 2.b where $r=[2, 1, 2]$}
\label{fig.III-case-2b}
\end{figure}

Now let $n \ge 3$.
If $J=M$, then $CS(s')$ contains consecutive $2$'s and consecutive $3$'s,
contradicting \cite[Lemma~3.8]{lee_sakuma_2}.
On the other hand, if $J \subsetneq M$,
then we see, by using Lemma~\ref{lem:vertex_position}(2)
as in the case $n=2$ (cf. Figure~\ref{fig.III-case-2b}(b)), that
a subsequence of the form $(\ell_1, 2, 2, \ell_2)$
with $\ell_1, \ell_2 \in \ZZ_+$ occurs in $S(\phi(e_2'e_3'))$,
so in $CS(\phi(\partial D_1'))=CS(r)$, a contradiction.

\medskip
\noindent
{\bf Case 2.c.} {\it $CS(s)$ consists of $2$ and $3$.}
\medskip

Without loss of generality,
we may assume that $3$ occurs in $S(z_1y_2)$.
There are three possibilities:
\begin{enumerate}[\indent \rm (i)]
\item
$S(z_1y_2)$ consists of only $3$, where
$S(z_1)=(n_1\langle 3 \rangle)$,
$S(y_2)=(n_2\langle 3 \rangle)$,
and $S(z_1y_2)=((n_1+n_2)\langle 3 \rangle)$
with $n_1,n_2\in\ZZ_+\cup\{0\}$;

\item
$S(z_1y_2)$ consists of only $3$, where
$S(z_1)=(n_1\langle 3 \rangle,\ell_1)$,
$S(y_2)=(\ell_2,n_2\langle 3 \rangle)$,
and $S(z_1y_2)=((n_1+n_2+1)\langle 3 \rangle)$
with $\ell_1,\ell_2\in\{1,2\}$, $\ell_1+\ell_2=3$ and
$n_1,n_2\in\ZZ_+\cup\{0\}$;

\item
$S(z_1y_2)$ consists of $2$ and $3$.
\end{enumerate}
By an argument as in Case~1.a(i) and (ii) in Section
~\ref{proof_for_twist_links_2},
we can see that neither (i) nor (ii) can happen.
(In the above, we need to appeal to Lemma~\ref{lem:vertex_position}(2)
instead of Lemma~\ref{lem:vertex_position}(1).)

So, we may assume (iii) holds.
By Lemma~\ref{lem:case3-2}(2) and Remark~\ref{rem:(2)holds}(3),
we see that
$S(z_1y_2)=(n_1 \langle 3 \rangle, 2, 2, n_2 \langle 3 \rangle)$,
where $0 \le n_i \le n-1$ is an integer for $i=1,2$.
We show that $n_1=n_2=n-1$.
Suppose that this does not hold.
If $J=M$, then $CS(\phi(\delta^{-1}))=CS(s')$ includes
both a term $1$ and a term $3$, contradicting \cite[Lemma~3.8]{lee_sakuma_2}.
On the other hand, if $J \subsetneq M$,
then we see, by an argument using Lemma~\ref{lem:vertex_position}(2)
as in Case~2.b,
that a subsequence of the form $(\ell_1, 1, \ell_2)$
with $\ell_1, \ell_2 \in \ZZ_+$ occurs in $S(\phi(e_2'e_3'))$,
so in $CS(\phi(\partial D_1'))=CS(r)$, a contradiction.
Thus we have
$S(z_1y_2)=((n-1) \langle 3 \rangle,2,2,(n-1) \langle 3\rangle)$.
To avoid a contradiction to \cite[Lemma~3.8]{lee_sakuma_2},
we must have $n=2$, namely $r=3/8$.
Again by \cite[Lemma~3.8]{lee_sakuma_2} and the above argument,
we see
$S(z_iy_{i+1})=(3,2,2,3)$
for every $i$.
Hence
$CS(s)=\lp 2,3,2,2,3,\dots,2,3,2,2,3\rp$,
and therefore
$CT(s)=\lp 1,2,\dots,1,2 \rp$.
Thus by using \cite[Lemma~3.8]{lee_sakuma_2} and \cite[Corollary~3.14]{lee_sakuma_2},
we see $CT(s)=\lp 1,2,1,2 \rp$ and
so $CS(s)=\lp 2, 3, 2, 2, 3, 2, 3, 2, 2, 3 \rp$.
Then $s=5/12$, and hence $J$ is obtained from the map in
Figure~\ref{fig.III-case-2b}(a) by reversing
the outer and inner boundaries.
Thus the inner boundary label of $J$ is $\lp 2,1,1,2,1,1 \rp$.
If $J=M$, then $CS(s')=CS(\phi(\delta^{-1}))=\lp 2,1,1,2,1,1 \rp$, and so $s'=3/4$.
This again shows that the loops $\alpha_{5/12}$ and $\alpha_{3/4}$ are homotopic in $S^3-K(3/8)$.
If $J \subsetneq M$,
then, we see by using Lemma~\ref{lem:vertex_position}(2)
as in Case~2.b or as in the proof of Lemma~\ref{lem:case3-2}(1),
that a term $1$ appears in
$CS(r)=CS(3/8)=\lp 3,3,2,3,3,2\rp$, a contradiction.
\qed

\section{Proof of Theorems \ref{main_corollary} and
\ref{main_corollary_2}}
\label{proof_of_corollary}

In order to prove Theorems~\ref{main_corollary} and
\ref{main_corollary_2},
we need the following refinement of \cite[Lemma~4.7]{lee_sakuma_2}
(cf. \cite[Lemma~V.5.2]{lyndon_schupp}).

\begin{lemma}
\label{analogy_lyndon_schupp}
Suppose $G=\langle X \,|\, R \, \rangle$ with $R$ being symmetrized.
Let $u, v$ be two cyclically reduced words in $X$
which are not trivial in $G$,
and let $w$ be a nontrivial reduced word in $X$ such that
$u=wvw^{-1}$ in $G$ but $u \neq wvw^{-1}$ in $F(X)$.
Then the relation $u=wvw^{-1}$ in $G$
is realized by a nontrivial reduced annular $R$-diagram.
To be precise, there is a reduced annular $R$-diagram $(M,\phi)$
and an edge path, $\gamma$, in $M$,
joining a vertex $O_+$ of the outer boundary and a vertex $O_-$
of the inner boundary
which satisfy the following conditions.
\begin{enumerate}[\indent \rm (i)]
\item There is an outer boundary cycle $\alpha$ with base point $O_+$
such that $\phi(\alpha)$ is visibly equal to the word $u$.

\item There is an inner boundary cycle $\delta$ with base point $O_-$
such that $\phi(\delta)$ is visibly equal to the word $v^{-1}$.

\item The word $\phi(\gamma)$ is equal to $w$ in $G$.
\end{enumerate}
\end{lemma}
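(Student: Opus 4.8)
The plan is to adapt the classical construction behind Lemma~\ref{lyndon_schupp} (that is, the proof of \cite[Lemma~V.5.2]{lyndon_schupp}) so as to produce, alongside the annular diagram, the cut path $\gamma$ along which the conjugating word $w$ is read.

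First I would convert the hypothesis into a statement about a single trivial word. Since $u=wvw^{-1}$ in $G$, the word
\[
W:=u\,w\,v^{-1}w^{-1}
\]
equals the identity in $G$, as substituting $u=wvw^{-1}$ gives $W=wvw^{-1}wv^{-1}w^{-1}=1$. All possible free cancellations in $W$ occur at the junctions involving the two occurrences of $w$, since the $u$- and $v^{-1}$-portions are individually reduced (indeed $u,v$ are cyclically reduced) and are not adjacent to one another; such cancellations are harmless because the two $w$-arcs are identified in the next step. Hence van Kampen's lemma produces a reduced, simply connected $R$-diagram $D$ whose boundary cycle factors into four consecutive arcs labelled $u$, $w$, $v^{-1}$ and $w^{-1}$. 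The hypothesis $u\neq wvw^{-1}$ in $F(X)$ says precisely that $W\neq 1$ in $F(X)$, so $D$ contains at least one $2$-cell, i.e. $D$ is nontrivial.

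Next I would sew $D$ into an annulus. Reading $\partial D$ as $u\,w\,v^{-1}w^{-1}$, name the four breakpoints so that the $u$-arc runs from $O_+$ to $A$, the $w$-arc from $A$ to $B$, the $v^{-1}$-arc from $B$ to $C$, and the $w^{-1}$-arc from $C$ back to $O_+$. Because the $w$-arc and the $w^{-1}$-arc carry mutually inverse labels, I identify the $w$-arc with the reverse of the $w^{-1}$-arc edge by edge; this respects the labelling, so the quotient is again an $R$-diagram $M$. Topologically this is the identification of one pair of opposite arcs on the boundary of a disk, which yields an annulus---the matching of labels forces the cylinder identification rather than the M\"obius one. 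Under the induced identifications $A\sim O_+$ and $B\sim C=:O_-$, the $u$-arc becomes a loop at $O_+$, the $v^{-1}$-arc a loop at $O_-$, and the glued $w$-arc an edge path $\gamma$ from $O_+$ to $O_-$. Letting $\alpha$ be the $u$-loop (oriented according to \cite[Convention~4.6]{lee_sakuma_2}) and $\delta$ the $v^{-1}$-loop, conditions (i)--(iii) hold visibly: $\phi(\alpha)\equiv u$, $\phi(\delta)\equiv v^{-1}$, and $\phi(\gamma)\equiv w$, so in particular $\phi(\gamma)=w$ in $G$.

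The step I expect to be the main obstacle is verifying that $M$ is \emph{reduced}. Since $D$ is reduced, the only cancelling pairs that folding can create consist of two $2$-cells $D_1,D_2$ lying against the two $w$-arcs and becoming adjacent along an identified edge of $\gamma$. I would eliminate these by choosing $D$ to have the fewest $2$-cells among all reduced disk diagrams with boundary $W$: if such a straddling pair had mutually inverse boundary labels, deleting both and re-gluing the freed edges (which match under the fold, exactly because the labels are inverse) would give a disk diagram for $W$ with two fewer $2$-cells, contradicting minimality. Alternatively---and here the fact that (iii) only demands equality \emph{in $G$} is what gives room---one may take $D$ arbitrary and cancel straddling pairs directly in $M$: each cancellation leaves the outer and inner boundary labels $u$ and $v^{-1}$ untouched and merely reroutes $\gamma$ across the deleted region, altering $\phi(\gamma)$ by a relator and so preserving $\phi(\gamma)=w$ in $G$. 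Iterating yields a reduced $M$, which remains nontrivial, for otherwise $u=wvw^{-1}$ would already hold in $F(X)$, contrary to hypothesis. This produces the required diagram and path, completing the proof.
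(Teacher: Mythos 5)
Your cut-and-sew strategy (build a disk diagram for $W\equiv u\,w\,v^{-1}w^{-1}$, then identify the two $w$-arcs to open an annulus) is a genuinely different route from the paper's proof, which instead imitates \cite[Proof of Lemma V.5.2]{lyndon_schupp}: there one writes $u=pp_1\cdots p_n$ in $F(X)$ with $p=wvw^{-1}$ \emph{fixed} and $n$ minimal, builds the bouquet diagram in which the $v$-cell is attached by a stem labelled $w$, folds until the boundary label is the reduced word $u$, and then deletes the interior of the $v$-cell; reducedness comes from minimality of $n$ via the argument of \cite[Lemma V.2.1]{lyndon_schupp}, and nontriviality is anchored in $n\ge 1$, which is immediate from $u\neq wvw^{-1}$ in $F(X)$. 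Your construction, by contrast, must recover reducedness \emph{after} the gluing, and that is where the proposal breaks down.

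The gap is in your endgame, in both variants. In variant (a), deleting a straddling cancelling pair and re-sewing does \emph{not} produce a disk diagram with boundary $W$: the surgery replaces a letter of the $w$-arc by the complementary arc of the deleted cell, so the new boundary is $u\,\tilde w\,v^{-1}\tilde w^{-1}$ with $\tilde w=w$ only in $G$, and minimality among diagrams with boundary exactly $W$ is not contradicted. (This is repairable by minimizing over all boundary words $u\,z\,v^{-1}z^{-1}$ with $z=w$ in $G$, which is in effect what the paper's minimal $n$ for fixed $p$ accomplishes.) Variant (b) is worse: the final sentence ``which remains nontrivial, for otherwise $u=wvw^{-1}$ would already hold in $F(X)$'' is a non sequitur. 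Each cancellation reroutes $\gamma$ and changes $\phi(\gamma)$ by conjugates of relators, so if the iteration consumed all $2$-cells you could conclude only that $u=zvz^{-1}$ in $F(X)$ for \emph{some} $z$ with $z=w$ in $G$ --- not for $z=w$ itself, so the hypothesis $u\neq wvw^{-1}$ in $F(X)$ is not violated. This degenerate outcome is not hypothetical: in the very application of the lemma in Section \ref{proof_of_corollary} one has $u\equiv v\equiv u_s$, which is freely conjugate to itself (via $z=1$), so freeness of the conjugacy between $u$ and $v$ is \emph{not} excluded by the hypotheses; what rules the collapse out there is the extra information $w\notin\langle u_s\rangle$, which is used before the lemma is invoked, or, in the paper's proof, the device of keeping the stem label $w$ and the expression $p=wvw^{-1}$ fixed so that nontriviality is read off from $n\ge 1$ rather than from a diagram-reduction process. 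Two smaller points in the same spirit: free cancellation at the $u$--$w$ and $v^{-1}$--$w^{-1}$ junctions eats into the $u$- and $v^{-1}$-arcs, so ``visibly equal'' in (i)--(ii) needs the unreduced boundary to be realized (e.g.\ by spurs), not waved away; and in a degenerate disk diagram the two $w$-arcs need not be disjoint embedded arcs, so the identification must be performed on the abstract boundary polygon rather than inside the planar complex.
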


\begin{proof}
We prove the lemma by imitating
\cite[Proof of Lemma~V.5.2]{lyndon_schupp}.
By the assumption that $u=wvw^{-1}$ in $G$,
$u$ is equal to some product
$pp_1\cdots p_n$ in $F(X)$,
where $p=wvw^{-1}$ and $p_i=c_ir_ic_i^{-1}$ with $r_i\in R$.
We may assume the number $n$ is minimal
among all such products.
Let $M_0'$ be a simply connected diagram over $F(X)$
consisting of $n+1$ disks $D, D_1,\cdots, D_n$
with stems $\gamma,\gamma_1,\cdots, \gamma_n$
joined to a distinguished vertex $O_+$,
satisfying the following conditions
(see \cite[Figure~V.1.1]{lyndon_schupp}).
\begin{enumerate}[\indent \rm (i)]
\item Let $O_-$ be the endpoint of $\gamma$ in $\partial D$.
Then the label of the boundary cycle of $D$ with base point $O_-$
is visibly equal to $v$.

\item The label of the boundary cycle of $D_i$, whose base point is
the endpoint of $\gamma_i$ in $\partial D_i$,
is visibly equal to $r_i$ for each $i\in \{1,\cdots, n\}$.

\item Let $\alpha$ be the boundary cycle of $M_0'$
with base point $O_+$ and with initial segment $\gamma$.
Then the label of $\alpha$ is visibly equal to
the product $pp_1\cdots p_n$.
\end{enumerate}
By applying the operations in
\cite[Proof of Theorem~V.1.1]{lyndon_schupp} to $M_0'$,
obtain a simply connected diagram $M'$
whose boundary label is visibly equal to the reduced word $u$.
Since $u \neq 1$ in $G$,
the $2$-cell $D$ with label $v$ was not deleted
in the construction of $M'$.
Form an annular diagram
$M$ from $M'$ by deleting the (interior of) the $2$-cell $D$.
Since $n$ is minimal, $M$ is reduced
by the argument in \cite[Proof of Lemma~V.2.1]{lyndon_schupp}.
Continue to denote by $O_{\pm}$ the vertices of $M$
determined by the vertices $O_{\pm}$ of $M_0'$.
(It should be noted that both vertices are not removed during the construction.)
Continue to denote by $\gamma$ the edge path in $M$ \lq obtained'
from the edge $\gamma$ of $M_0'$.
(During the construction subsegments of $\gamma$ may be
replaced with homotopic segments.)
Then $\gamma$ joins $O_+$ and $O_-$,
and we have $\phi(\gamma)=w$ in $G$.
Continue to denote by $\alpha$ the outer boundary cycle of $M$
obtained from the outer boundary cycle $\alpha$ of $M_0'$
with base point $O_+$,
and let $\delta$ be the inner boundary cycle of $M$
obtained from the inverse
of the boundary cycle of $D$ in $M_0'$ with base point $O_-$.
Then we see $\phi(\alpha)\equiv u$, $\phi(\delta)\equiv v^{-1}$.
Finally $M$ is nontrivial because
the product $wvw^{-1}$ is not equal to $u$ in $F(X)$.
This completes the proof of Lemma~\ref{analogy_lyndon_schupp}.
\end{proof}

We also need the following fact.

\begin{lemma}
\label{centralizer}
Suppose $r=q/p$, where $p$ and $q$ are relatively prime integers
such that $q\not\equiv \pm1 \pmod{p}$.
Then, for a nontrivial element $u\in G(K(r))$,
the centralizer
$Z(u)=\{w\in G(K(r)) \svert wuw^{-1}=u\}$
of $u$ in $G(K(r))$ is described as follows.
\begin{enumerate}[\indent \rm (1)]
\item If $u$ is non-peripheral, then $Z(u)$ is an infinite cyclic group
generated by some primitive element $u_0$
such that $u=u_0^k$ for some integer $k\ge 1$.

\item If $u$ is peripheral, then $Z(u)$ is conjugate to the
peripheral subgroup, $\langle m, l \rangle \cong \ZZ\oplus \ZZ$,
generated by a meridian and longitude pair $\{m, l\}$.
In particular, $u$ is peripheral if and only if
it commutes with a (conjugate of a) meridian.
\end{enumerate}
\end{lemma}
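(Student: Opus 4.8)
The plan is to bypass the combinatorial machinery entirely and argue through the hyperbolic geometry of the complement. The first observation I would make is that the hypothesis $q\not\equiv\pm1\pmod p$ is precisely the condition that $K(r)$ is \emph{not} a $(2,p)$-torus link: a $2$-bridge link $K(q/p)$ is a torus link exactly when $q\equiv\pm1\pmod p$. Since the non-torus $2$-bridge links are hyperbolic, $M:=S^3-K(r)$ is then a finite-volume hyperbolic $3$-manifold, so $G(K(r))=\pi_1(M)$ is realized as a torsion-free, non-elementary, discrete subgroup $\Gamma$ of $\PSL(2,\CC)=\mathrm{Isom}^+(\HH^3)$, and every nontrivial $u\in\Gamma$ is either loxodromic or parabolic. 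Because $M$ has finite volume, each of its cusps is a torus and each maximal parabolic subgroup is a rank-$2$ peripheral subgroup isomorphic to $\ZZ\oplus\ZZ$ generated by a meridian-longitude pair; moreover meridians are parabolic, and ``$u$ is peripheral'' becomes synonymous with ``$u$ is parabolic''. With this dictionary installed, both assertions reduce to standard facts about Kleinian groups, and the remaining work is to assemble them carefully.

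For part (1), I would take $u$ non-peripheral, hence loxodromic with axis $A$ and fixed-point set $\{p_+,p_-\}\subset\partial\HH^3$. Any $g\in Z(u)$ preserves $\mathrm{Fix}(u)=\{p_+,p_-\}$, so it either fixes both endpoints or interchanges them; but an orientation-preserving isometry swapping the endpoints of $A$ is an involution, which cannot occur since $\Gamma$ is torsion-free. Thus $g\in\mathrm{Stab}_\Gamma(A)$, and conversely all elements fixing both $p_\pm$ commute (in the model $p_\pm=0,\infty$ they are the maps $z\mapsto\lambda z$), so $Z(u)=\mathrm{Stab}_\Gamma(A)$. A torsion-free discrete group of loxodromics sharing a common axis is infinite cyclic, say $Z(u)=\langle u_0\rangle$, whence $u=u_0^k$ for some $k\ge1$. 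To see that $u_0$ is primitive in the paper's sense, I would note that if $u_0$ were conjugate to a proper power $v^m$ with $m\ge2$, then after conjugating $u_0=(v')^m$ literally, so $v'\in Z(u_0)=\langle u_0\rangle$ and $u_0=u_0^{jm}$, forcing $m=1$.

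For part (2), I would take $u$ peripheral, hence parabolic fixing a unique $\xi\in\partial\HH^3$. Every $g\in Z(u)$ preserves $\mathrm{Fix}(u)=\{\xi\}$ and so fixes $\xi$; normalizing $\xi=\infty$ in the upper half-space model, the elementary matrix computation shows an element fixing $\xi$ commutes with the nontrivial translation $u$ only if it too is a translation. Hence $Z(u)$ lies in the maximal parabolic subgroup $P:=\mathrm{Stab}_\Gamma(\xi)$, which by finite volume is the rank-$2$ cusp group $\ZZ\oplus\ZZ$; since $P$ is abelian it centralizes $u$, giving $Z(u)=P$, a peripheral subgroup conjugate to $\langle m,l\rangle$. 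For the final sentence, a meridian is parabolic, so $u$ commutes with a conjugate of a meridian exactly when $u$ lies in some conjugate peripheral $\ZZ\oplus\ZZ$, i.e. exactly when $u$ is peripheral.

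The hard part will not be the internal arguments but securing the two geometric inputs cleanly: the hyperbolicity of $K(r)$ under $q\not\equiv\pm1\pmod p$, and the description of $\Gamma$ as a torsion-free lattice whose cusp subgroups have rank $2$. Once these are granted, the only delicate points are ruling out a commuting involution that swaps the endpoints of a loxodromic axis and ruling out a loxodromic sharing the fixed point of a parabolic, both handled by torsion-freeness together with the short matrix computation, and then checking that the generator $u_0$ is primitive relative to the conjugacy-based definition used in this paper.
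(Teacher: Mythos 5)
Your proposal is correct and takes essentially the same route as the paper: the paper's entire proof consists of observing that the hypothesis $q\not\equiv\pm1\pmod{p}$ makes $K(r)$ hyperbolic, so that $G(K(r))$ is a discrete (torsion-free, finite-covolume) subgroup of $\PSL(2,\CC)$, and then citing \cite[Lemma 4.5]{Scott} for the description of centralizers. Your argument simply unpacks the content of that cited lemma (loxodromic/parabolic dichotomy, axis stabilizers, cusp subgroups) in a self-contained way, and the details you supply are accurate.
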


\begin{proof}
By the assumption,
$K(r)$ is hyperbolic and hence
$G(K(r))$ is identified with a discrete subgroup of $\PSL(2,\CC)$.
Thus the desired fact follows from \cite[Lemma~4.5]{Scott}
\end{proof}

\begin{proof}{\it of Theorems~\ref{main_corollary} and
\ref{main_corollary_2} }
Consider a $2$-bridge link $K(r)$ with
$r=n/(2n+1)$ and $(n+1)/(3n+2)$,
where $n \ge 2$, and
assume that the loop $\alpha_s$ with $s\in I_1(r) \cup I_2(r)$
is either imprimitive or peripheral.
Then, by Lemma~\ref{centralizer},
there is a nontrivial element $w\in G(K(r))$
such that $w\not\in \langle u_s\rangle$
and $wu_sw^{-1}=u_s$.
This identity cannot hold in $F(a,b)$,
since $u_s$ is not a nontrivial cyclic permutation of itself.
So by Lemma~\ref{analogy_lyndon_schupp},
the identity $wu_sw^{-1}=u_s$ in $G(K(r))$
is realized by a nontrivial reduced annular $R$-diagram, $M$,
with outer and inner labels $u_s$ and $u_s^{-1}$, respectively.
Then $M$ satisfies the assumption of \cite[Theorem~4.9]{lee_sakuma_2}
and hence its conclusion.
Thus the arguments in Sections~\ref{proof_for_twist_links_1},
\ref{proof_for_twist_links_2} and \ref{proof_for_[2,1,n]}
reveal all possible shapes of the annular diagram $M$.

\medskip
\noindent {\bf Case 1.} {\it $r=2/5$.}
\medskip

In this case, the arguments in Section~\ref{proof_for_twist_links_1}
imply that one of the following holds.

\begin{enumerate}[\indent \rm (i)]
\item $CS(s)=\lp 5,5 \rp$ and hence $s=1/5$
(see \cite[Remark~3.15]{lee_sakuma_2}).
In this case, $M$ is equivalent to one of the reduced annular diagrams
in Figure~\ref{fig.case-1a} and their natural generalizations.

\item $CS(s)=\lp 4,3,4,3 \rp$ and hence $s=2/7$.
In this case, $M$ is equivalent to one of the reduced annular diagram in
Figure~\ref{fig.case-1c}(b).

\item $CS(s)=\lp 2,1,1,2,1,1 \rp$ and hence
$s=3/4$.
In this case, $M$ is equivalent to one of the reduced annular diagram in
Figure~\ref{fig.case-2b-iii}(b).

\item $CS(s)=\lp 2,2,1,2,2,1 \rp$ and hence
$s=3/5$.
In this case, $M$ is equivalent to one of
the reduced annular diagrams in
Figure~\ref{fig.case-2b-i} and their natural generalizations.
\end{enumerate}

Suppose that condition (i) holds
and that $M$ is as in Figure~\ref{fig.case-1a}(a).
Starting from the uppermost vertex,
read the outer boundary label in the clockwise
direction with the initial label $a$.
Then we obtain the word
\[
aba^{-1}b^{-1}a^{-1}b^{-1}a^{-1}bab
=wu_{1/5}^{-1}w^{-1},
\]
where
$w=b^{-1}a^{-1}b^{-1}$ and
$u_{1/5}=ababab^{-1}a^{-1}b^{-1}a^{-1}b^{-1}$.
Then the inner boundary label with the same base vertex
and with the clockwise direction
is equal to
\[
baba^{-1}b^{-1}a^{-1}b^{-1}a^{-1}ba
=bwu_{1/5}^{-1}w^{-1}b^{-1}
\]
Since these two words determine the same element
of $G(K(2/5))$,
we see that $b$ and $wu_{1/5}^{-1}w^{-1}$ commute with each other.
So, $u_{1/5}$ commutes with $w^{-1} b w$,
which is conjugate to a meridian $b$.
Hence $u_{1/5}$ is peripheral by Lemma~\ref{centralizer}.
The annular diagram in Figure~\ref{fig.case-1a}(b)
shows only that $u_{1/5}$ commutes with $w^{-1} b^2 w$,
and similarly any natural generalization of
the annular diagram in Figure~\ref{fig.case-1a}
imply only that $u_{1/5}$ commutes with a power of $w^{-1} b w$.
Hence the centralizer $Z(u_{1/5})$ is the rank $2$ free abelian group
generated by $u_{1/5}$ and $w^{-1} b w$.
Hence $u_{1/5}$ is primitive.

Suppose that condition (ii) holds.
By reading the annular diagram in Figure~\ref{fig.case-1c}(a)
as above, we see that
$(ab)^{-1}u_{2/7}(ab)$
is equal to $(ba)^3$.
Hence $u_{2/7}=((ab)(ba)(ab)^{-1})^3$ is imprimitive.
On the other hand, the annular diagram in Figure~\ref{fig.case-1c}(b)
shows
\[
(ab)^{-1}u_{2/7}(ab)=(b^2ab^{-1}a^{-1}b^{-1})u_{2/7}(b^2ab^{-1}a^{-1}b^{-1})^{-1}.
\]
Letting $w:=b^2ab^{-1}a^{-1}b^{-1}$, we see that
\[
w=b(bab^{-1}a^{-1}b^{-1})=b(abab^{-1}a^{-1})=(ba)^2(ab)^{-1}\]
(note that the second equality of the above identity comes from
$1=u_{2/5}=abab^{-1}a^{-1}(baba^{-1}b^{-1})$), so that
\[
(ab)^{-1}u_{2/7}(ab)=(ba)^2(ab)^{-1}u_{2/7}(ab)(ba)^{-2},
\]
namely, $(ba)^2$ commutes with $(ab)^{-1}u_{2/7}(ab)$.
Since this diagram is the unique annular reduced diagram
realizing self conjugacies for $u_{2/7}$,
we see that the centralizer $Z(u_{2/7})$ is the infinite cyclic group
generated by $(ab)(ba)(ab)^{-1}$.
So, we can conclude that $u_{2/7}$ is not peripheral
by Lemma~\ref{centralizer}.

Suppose that condition (iii) holds.
By reading the annular diagram in Figure~\ref{fig.case-2b-iii}(a)
as above, we see that
$w^{-1}u_{3/4}^{-1}w$ with $w=aba^{-1}$
is equal to $(b^{-1}a^{-1}ba)^3$.
Hence $u_{3/4}=(w(a^{-1}b^{-1}ab)w^{-1})^3$ is imprimitive.
On the other hand, the annular diagram in Figure~\ref{fig.case-2b-iii}(b)
shows that $a^{-1}b^{-1}ab$ commutes with $w^{-1}u_{3/4}^{-1}w$.
Moreover, by an argument as in (ii),
we see that $Z(u_{3/4})$ is the infinite cyclic group generated by
$w(a^{-1}b^{-1}ab)w^{-1}$.
Hence $u_{3/4}$ is not peripheral by Lemma~\ref{centralizer}.

Suppose that condition (iv) holds.
By reading the annular diagram in Figure~\ref{fig.case-2b-i}(a)
as above, we see that
$u_{3/5}$ is equal to $bu_{3/5}b^{-1}$.
Hence, $u_{3/5}$ is peripheral by Lemma~\ref{centralizer}.
We can also see
that $u_{3/5}$ is primitive
by an argument as in (i).

Since we have checked all possible cases for the 2-bridge knot $K(2/5)$,
the proof of Theorem~\ref{main_corollary} for $K(2/5)$ is complete.

\medskip
\noindent {\bf Case 2.} {\it $r=n/(n+1)$ with $n\ge 3$.}
\medskip

In this case, the arguments in Section~\ref{proof_for_twist_links_2}
imply that one of the following holds.

\begin{enumerate}[\indent \rm (i)]
\item $n=3$ and $CS(s)=\lp 4,3,4,3 \rp$, i.e., $r=3/7$ and $s=2/7$.
In this case,
$M$ is equivalent to the diagram
in Figure~\ref{fig.II-case-1b}(a).

\item $CS(s)=\lp n\langle 2\rangle,1,n\langle 2\rangle,1 \rp$, i.e.,
$s=(n+1)/(2n+1)$.
In this case, $M$ is equivalent to one of the diagrams
in Figure~\ref{fig.II-case-2a-i}
and their natural generalizations.
\end{enumerate}

Suppose that condition (i) holds.
By reading the annular diagram in Figure~\ref{fig.II-case-1b}(a)
as in Case 1(i),
we see that
\[
(ab)^{-1}u_{2/7}(ab)=
(bab^{-1}a^{-1}b^{-1})u_{2/7}(bab^{-1}a^{-1}b^{-1})^{-1}.
\]
So $w:=ab^2ab^{-1}a^{-1}b^{-1}$ belongs to the centralizer $Z(u_{2/7})$.
Since this diagram is the unique annular reduced diagram
realizing self conjugacies for $u_{2/7}$,
we see that $Z(u_{2/7})$ is the infinite cyclic group
generated by $w$.
This implies that $u_{2/7}$ is not peripheral.
On the other hand, we see
\[
w^2=ab(bab^{-1}a^{-1}b^{-1}ab)bab^{-1}a^{-1}b^{-1}
=ab(aba^{-1}b^{-1}a^{-1}ba)bab^{-1}a^{-1}b^{-1}
=u_{2/7}.
\]
In the above identity,
the second equality follows from the relation
\[
1=u_{3/7}=abab^{-1}a^{-1}(bab^{-1}a^{-1}b^{-1}ab)a^{-1}b^{-1}.
\]
Hence $u_{2/7}=w^2$ is imprimitive.

Suppose that condition (ii) holds.
By reading the annular diagram in~\ref{fig.II-case-2a-i}(a)
as in Case 1(i),
we see $u_{s}=b^{-1}u_s b$, where $s=(n+1)/(2n+1)$.
Hence $u_s$ is peripheral.
We can also see as in Case 1(i) that
$u_s$ is primitive.

This completes the proof of Theorem~\ref{main_corollary} for
$K(n/(n+1))$ with $n\ge 3$.

\medskip
\noindent {\bf Case 3.} {\it $r=(n+1)/(3n+2)$ with $n\ge 2$.}
\medskip

In this case, we see from the arguments in
Section~\ref{proof_for_[2,1,n]} that
there is no such annular diagram.
Hence every $\alpha_s$ with $s\in I_1(r) \cup I_2(r)$
is primitive and is not peripheral.
This completes the proof of Theorem~\ref{main_corollary_2}.
\end{proof}

\section*{Acknowledgement}
The authors would like to thank Koji Fujiwara
for stimulating conversations.
They would also like to thank the referee for careful reading
and helpful comments.

\bibstyle{plain}
\bigskip

\end{document}